\def\NN{\mathbb{N}}
\def\SS{\mathbb{S}}
\newtheorem{theorem}{Theorem}
\theoremstyle{remark}
\newtheorem{remark}{Remark}
\newtheorem{example}{\textbf{Example}}
\title{Solving High-Dimensional PDEs Using Linearized Neural Networks}
\author{
Tong Mao\footnotemark[1]
\and
Jinchao Xu\thanks{
Applied Mathematics and Computational Sciences, Computer, Electrical and Mathematical Science and Engineering Division (CEMSE), King Abdullah University of Science and Technology, Thuwal, 23955, Saudi Arabia.
(\texttt{tong.mao@kaust.edu.sa, jinchao.xu@kaust.edu.sa})
}
\and 
Xiaofeng Xu\thanks{
Department of Mathematics, Pennsylvania State University, State College, PA 16801, USA.
(\texttt{xkx5060@psu.edu})
}
}
\date{}
\begin{document}
\maketitle

\begin{abstract}
Linearized shallow neural networks that are constructed by fixing the hidden-layer parameters have recently shown strong performance in solving partial differential equations (PDEs). 
Such models, widely used in the random feature method (RFM) and extreme learning machines (ELM), transform network training into a linear least-squares problem. 
In this paper, we conduct a numerical study of the variational (Galerkin) and collocation formulations for these linearized networks. 
Our numerical results reveal that, in the variational formulation, the associated linear systems are severely ill-conditioned, forming the primary computational bottleneck in scaling the neural network size, even when direct solvers are employed. 
In contrast, collocation methods combined with robust least-squares solvers exhibit better numerical stability and achieve higher accuracy as we increase neuron numbers.
This behavior is consistently observed for both ReLU$^k$ and $\tanh$ activations, with $\tanh$ networks exhibiting even worse conditioning. 
Furthermore, we demonstrate that random sampling of the hidden layer parameters, commonly used in RFM and ELM, is not necessary for achieving high accuracy.
For ReLU$^k$ activations, this follows from existing theory and is verified numerically in this paper, while for $\tanh$ activations, we introduce two deterministic schemes that achieve comparable accuracy.
\end{abstract}

\section{Introduction}
Neural networks have emerged as powerful tools in scientific computing, 
particularly for the numerical solution of partial differential equations~(PDEs). 
Representative approaches include physics-informed neural networks (PINNs)~\cite{Cuomo:2022,RPK:2019}, 
the deep Ritz method~\cite{EY:2018}, and the finite neuron method~\cite{Xu:2020}. 
All of these methods share a common principle: they seek optimal parameters for a neural 
network by minimizing a suitable energy functional, that is, by solving an optimization 
problem of the form
\begin{equation}
\label{NN_optimization}
\min_{v \in \mathcal{F}} E(v),
\end{equation}
where $\mathcal{F}$ denotes a neural network function class and $E \colon \mathcal{F} \to \mathbb{R}$ 
is the chosen energy functional.

Despite their success in various applications, the training of neural networks for PDEs remains challenging due to non-convexity and ill-conditioning~\cite{HSTX:2022}. 
A variety of training algorithms specifically tailored for PDEs 
have been developed beyond conventional gradient descent~\cite{GBC:2016}. 
Examples include the hybrid least–squares gradient descent method~\cite{CGPPT:2020}, 
motivated by the adaptive basis viewpoint, and the active neuron least–squares method~\cite{AS:2021,AS:2022}, 
which adjusts neuron positions adaptively to escape training plateaus. 
The neuron-wise parallel subspace correction method~\cite{PXX:2022} provides a 
preconditioning strategy for linear layers and updates neurons in parallel. 
More recently, greedy algorithms~\cite{DT:1996,Jones:1992,MZ:1993,Temlyakov:2011} 
have been adapted to neural network optimization and have shown strong performance; 
see, for example,~\cite{DPW:2021,SHJHX:2023,SX:2022a}. 
The orthogonal greedy algorithm (OGA)~\cite{SHJHX:2023,SX:2022a} in particular has been 
proven to achieve the optimal approximation rates of shallow neural networks~\cite{SX:2024}. 
The randomized variant of OGA~\cite{XX:2025} further improves computational 
efficiency by reducing the size of the discrete dictionary in each iteration, 
while still achieving optimal rates.
Other approaches avoid iterative training altogether by employing fixed feature maps. 
Random feature methods (RFM)~\cite{CCEY:2022,EMW:2020,LL:2024,RR:2007} and extreme learning machines (ELM)~\cite{DL:2021,HZS:2006,LLR:2024} have attracted increasing attention in this context and have achieved remarkably high numerical accuracy, despite the lack of rigorous mathematical analysis.
Only recently, it is established in \cite{LMX2025} that, for ReLU$^k$ shallow neural networks, the same optimal 
approximation rate
\[
O\!\left(n^{-\frac{1}{2} - \frac{2(k-m)+1}{2d}}\right)
\]
can be obtained if the neuron parameters $(\omega_i,b_i)$ are chosen to form a quasi-uniform grid on $S^d$, with the corresponding neurons directly serving 
as basis functions.

Motivated by the theoretical results of~\cite{LMX2025}, this paper conducts a detailed numerical study of linearized shallow neural networks with pre-determined neurons for solving high-dimensional PDEs.
We focus on two questions regarding the behavior of linearized neural networks.
The first concerns the comparison between the variational (Galerkin) and collocation (least-squares) formulations.
In traditional finite element methods, the variational approach is standard, while in extreme learning machines and random feature methods, the collocation approach is almost exclusively used.
Understanding their relative performance in terms of numerical stability and accuracy is therefore of both theoretical and practical interest.
The second question is whether deterministic parameter constructions can effectively replace random sampling in setting the neuron parameters, and whether such deterministic designs can achieve comparable accuracy, particularly for $\tanh$-activated shallow neural networks, where theoretical results are not yet available.
We believe this investigation will also shed light on various other activation functions used in shallow neural networks. 

\medskip
\noindent
The main contributions of this paper are as follows:
\begin{itemize}
  \item \textbf{Linearized $\text{ReLU}^k$ shallow networks with pre-determined neurons.}  
  We study ReLU$^k$ shallow neural networks with hidden-layer parameters chosen deterministically as quasi-uniform grids on $S^d$. 
  We numerically demonstrate that the optimal approximation rate can be achieved using this approach without the need to optimize the hidden-layer parameters. 
    In low dimensions, neuron parameters can be arranged in quasi-uniform grids on the sphere. 
    In higher dimensions, however, constructing such grids is challenging. 
    To address this, we consider two alternatives: 
    (i) setting hidden-layer parameters by i.i.d.\ uniform sampling on the sphere, 
    which corresponds to a random feature method for ReLU$^k$ shallow networks, and 
    (ii) employing quasi–Monte Carlo (QMC) constructions to generate points on spheres, which are then used to fix the neuron parameters. 
Our numerical experiments demonstrate that QMC-based parameter selection 
provides higher accuracy than purely random sampling, indicating the necessity of designing high-quality quasi-uniform grids on hyperspheres. 
  \item \textbf{Linearized $\tanh$ shallow neural networks with pre-determined neurons.}
As an alternative to random sampling of neuron parameters from $[-R,R]^{d+1}$, with an appropriate value of $R$, we introduce two deterministic schemes for fixing the nonlinear layer parameters:
(i) a \emph{Petrushev-type grid scheme}, which selects neuron parameters  from the tensor product of evenly spaced points on a scaled sphere and a one-dimensional interval; and
(ii) a \emph{sphere-based scheme}, which distributes the parameters quasi-uniformly on a scaled spherical surface of an appropriate radius. 
Both schemes eliminate randomness, ensure reproducible results, and achieve accuracy comparable to random sampling from $[-R,R]^{d+1}$.

  \item \textbf{Comparison between variational and collocation formulations.} 
  We conduct a numerical comparison of variational (Galerkin) and collocation formulations through $L^2$-minimization examples.  
  We observe that the numerical solution obtained from the variational formulation is less stable than that from the collocation method when increasing the number of neurons. 
  We investigate the conditioning of the resulting linear systems from the variational formulation 
and identify it as a primary bottleneck in scaling neural network size. 
While variational (Galerkin) formulations offer theoretical clarity, 
they suffer from severe ill-conditioning as the number of neurons grows, 
leading to numerical instability. 
In contrast, despite a lack of theoretical analysis, collocation methods combined with robust least-squares solvers exhibit better numerical stability in our numerical experiments.

\end{itemize}

\medskip
\noindent
The remainder of the paper is organized as follows. 
Section~\ref{sec:theoretical-results} introduces the ReLU$^k$ and $\tanh$ activation functions and recalls approximation results for the corresponding shallow neural networks.
Section~\ref{sec:methods} presents the variational and collocation formulations used in our numerical study. 
Section~\ref{sec:numerical-experiments} contains the main numerical experiments, including $L^2$-minimization problems, PDE benchmarks in multiple dimensions, and a study of $\tanh$ activation. 
We conclude in Section~\ref{sec:conclusion} with a summary of the main findings and directions for future research.

\section{Theoretical results}\label{sec:theoretical-results}

\subsection{ReLU$^k$ shallow neural network}
We recall fundamental approximation results for ReLU$^k$ shallow neural networks, which serve as the theoretical basis of our study. 
We examine the function class $\Sigma_{n, M} (\mathbb{D})$, defined for some $M > 0$, as
\begin{equation}\label{eq:sigma_n_M}
\Sigma_{n,M} (\mathbb{D}) = \left\{ \sum_{i=1}^n a_i d_i : a_i \in \mathbb{R}, \text{ } d_i \in \mathbb{D}, \text{ } \sum_{i = 1}^n |a_i| \leq M \right\},
\end{equation}
where the dictionary $\mathbb{D}$ is symmetric and is given by
\begin{equation}
\label{dictionary}
\mathbb{D} = \mathbb{P}_k^d:=
\left\{ \pm \text{ReLU}^k (\omega \cdot x + b ) :  (\omega,b) \in S^{d} \right\}.
\end{equation}

\begin{theorem}{\cite{SX:2024}}
Given $u \in \mathcal{K}_1(\mathbb{D})$, there exists a positive constant $M$ depending on $k$ and $d$ such that
\[
\inf_{u_n \in \Sigma_{n, M}(\mathbb{D})} \| u - u_n \|_{H^m(\Omega)} \leq C \| u \|_{\mathcal{K}_1(\mathbb{D})} n^{-\frac{1}{2} - \frac{2(k-m)+1}{2d}},
\]
where $\Sigma_{n, M}(\mathbb{D})$ and $\mathcal{K}_1(\mathbb{D})$ are the shallow neural network function space, and the variation space with respect to the $\text{ReLU}^k$ dictionary $\mathbb{D}$, respectively, and $C$ is a constant independent of $u$ and $n$.
\end{theorem}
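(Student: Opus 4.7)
The plan is to combine the integral representation built into $\mathcal{K}_1(\mathbb{D})$ with a sharpened Maurey--Jones--Barron greedy bound that exploits the smoothness of the $\text{ReLU}^k$ dictionary in $H^m(\Omega)$.

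First, I would invoke the defining integral representation: since $u \in \mathcal{K}_1(\mathbb{D})$, there exists a signed measure $\mu$ on the parameter manifold $S^d$ with total variation $\|\mu\|_{TV} \le \|u\|_{\mathcal{K}_1(\mathbb{D})}$ such that
\[
u(x) = \int_{S^d} \text{ReLU}^k(\omega \cdot x + b)\, d\mu(\omega,b).
\]
Writing $u$ as an expectation of dictionary elements under $d|\mu|/\|\mu\|_{TV}$ and applying the classical Maurey lemma in the Hilbert space $H^m(\Omega)$ already produces an approximant $u_n \in \Sigma_{n, M}(\mathbb{D})$ with error $\lesssim \|u\|_{\mathcal{K}_1(\mathbb{D})}\, n^{-1/2}$, provided $m \le k$ so that $\mathbb{D}$ is uniformly bounded in $H^m(\Omega)$; the constant $M$ depends only on $k$ and $d$ through this uniform bound.

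Next, to extract the extra factor $n^{-(2(k-m)+1)/(2d)}$, I would apply a Makovoz-type refinement: for a bounded subset $K$ of a Hilbert space, the $n$-term symmetric convex-hull approximation error is sharpened from $n^{-1/2}$ to $n^{-1/2}\,\varepsilon_n(K)$, where $\varepsilon_n(K)$ is the covering radius of $K$ by $n$ balls in the ambient norm. The central technical step is therefore a covering-number estimate for $\mathbb{D}=\mathbb{P}_k^d$ in $H^m(\Omega)$. Parametrizing by $(\omega,b)\in S^d$, the assignment $(\omega,b)\mapsto \text{ReLU}^k(\omega\cdot x+b)$ is H\"older continuous from $S^d$ into $H^m(\Omega)$ with exponent $k-m+\tfrac{1}{2}$, the fractional $\tfrac{1}{2}$ reflecting the standard Sobolev scaling of the jump singularity of the $(k+1)$-st derivative across the hyperplane $\omega\cdot x+b=0$. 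This H\"older bound gives covering numbers $\log N(\varepsilon,\mathbb{D},H^m) \lesssim \varepsilon^{-2d/(2(k-m)+1)}$, which when inserted into Makovoz's bound yields exactly the claimed rate $n^{-1/2-(2(k-m)+1)/(2d)}$.

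Finally, I would verify that the Makovoz-type construction delivers $u_n=\sum_{i=1}^n a_i d_i$ with $\sum_i |a_i|\le M\|u\|_{\mathcal{K}_1(\mathbb{D})}$ for an $M=M(k,d)$, so that $u_n$ indeed lies in $\Sigma_{n,M}(\mathbb{D})$; this $\ell^1$ control follows because the greedy selection draws from the probability measure $d|\mu|/\|\mu\|_{TV}$ weighted by $\|\mu\|_{TV}/n$. The main obstacle, in my view, is the precise H\"older/covering estimate for $\mathbb{P}_k^d$ in $H^m(\Omega)$: one must carefully track how the singularity of $\text{ReLU}^k$ along $\omega\cdot x+b=0$ interacts with differentiation up to order $m$ on the bounded domain $\Omega$, since this is where the sharp fractional exponent $k-m+\tfrac{1}{2}$ originates and hence where the final rate is determined. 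A secondary subtlety is that a naive Makovoz bound often introduces a logarithmic overhead; obtaining the clean algebraic rate would need either a direct stratified-greedy construction tailored to the smoothness structure of $\mathbb{P}_k^d$, or an application of Carl's inequality relating entropy numbers to Gelfand/Kolmogorov widths of $\mathcal{K}_1(\mathbb{D})$ in $H^m(\Omega)$.
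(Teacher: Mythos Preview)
The paper does not actually prove this theorem: it is quoted from \cite{SX:2024} as a background result, and no argument appears in the manuscript. So there is nothing in the present paper to compare your proposal against beyond the bare citation.

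On the merits of your outline, the main line of attack has a genuine gap. The dictionary $\mathbb{D}=\mathbb{P}_k^d$ is the image of the $d$-dimensional compact manifold $S^d$ under a map into $H^m(\Omega)$, so its metric entropy is that of a $d$-dimensional set: for whatever H\"older exponent $\alpha\le 1$ the parametrization enjoys one gets $N(\varepsilon,\mathbb{D},H^m)\lesssim \varepsilon^{-d/\alpha}$, hence $\varepsilon_n(\mathbb{D})\lesssim n^{-\alpha/d}$. Plugging this into the Makovoz refinement gives at best $n^{-1/2-1/(2d)}$ (with $\alpha=1$, which is what Lipschitz regularity of $\text{ReLU}^{k-m}$ actually yields when $k>m$). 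Your claimed H\"older exponent $k-m+\tfrac12$ cannot be correct for a map between metric spaces once $k>m$---an exponent exceeding $1$ forces constancy---and the estimate $\log N(\varepsilon,\mathbb{D},H^m)\lesssim \varepsilon^{-2d/(2(k-m)+1)}$ you wrote is a power-law bound on the \emph{log}-covering number, which is the entropy scale of the infinite-dimensional convex hull $\mathcal{K}_1(\mathbb{D})$, not of the finite-dimensionally parametrized dictionary itself. You have conflated the two objects.

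The sharp exponent $\tfrac{2(k-m)+1}{2d}$ in \cite{SX:2024} genuinely requires more than covering $\mathbb{D}$: it comes from the $k-m$ extra orders of smoothness that $\text{ReLU}^k$ carries beyond mere $H^m$-membership, exploited either through a stratified construction on $S^d$ with higher-order corrections in the parameter, or through the entropy of $\mathcal{K}_1(\mathbb{D})$ in $H^m$ combined with matching greedy/convex-hull approximation bounds. What you list only as a ``secondary subtlety'' and a fallback---Carl's inequality and the entropy of $\mathcal{K}_1(\mathbb{D})$---is in fact the essential mechanism, not a cleanup step; the Makovoz-on-$\mathbb{D}$ route by itself cannot reach the stated rate.
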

This result implies that ReLU$^k$ shallow networks achieve approximation rates of $O(n^{-\frac{1}{2} - \frac{2(k-m)+1}{2d}})$ in the variation space $\mathcal{K}_1(\mathbb{D})$. 

Recently, it is proved in~\cite{LMX2025} that the same rate can be obtained when the neuron parameters $(\omega_i,b_i)$ are fixed to form a quasi-uniform grid on $S^d$.
Given a predetermined set of parameters $\left\{\theta_j\right\}_{j=1}^n \subset \mathbb{S}^d$, we define the corresponding basis functions
$$
\phi_j(x)=\sigma_k\left(\theta_j \cdot \tilde{x}\right)=\sigma_k\left(w_j \cdot x+b_j\right), \quad j=1, \ldots, n, 
$$
and the finite neuron space as
$$
L_n^k=L_n^k\left(\left\{\theta_j\right\}_{j=1}^n\right)=\operatorname{span}\left\{\phi_1, \ldots, \phi_n\right\}. 
$$

Analogous to $\Sigma_{n, M}(\mathbb{D})$ in \eqref{eq:sigma_n_M}, we define the constrained version of $L_n^k$ as
$$
L_{n, M}^k=\left\{\sum_{j=1}^n a_j \phi_j:\left(n \sum_{j=1}^n a_j^2\right)^{\frac{1}{2}} \leq M\right\}.
$$



\begin{theorem}{\cite{LMX2025}}\label{thm:lfns}
 Let $k \geq m$, $f \in H^{\frac{d+2 k+1}{2}}(\Omega)$ and $\{\theta_j\}_{j=1}^n$ being quasi-uniform, we have 
 \begin{equation}
\begin{aligned}
\inf _{f_n \in L_{n, M}^k}\left\|f-f_n\right\|_{H^m(\Omega)} \lesssim n^{-\frac{1}{2}-\frac{2 (k-m)+1}{2 d}}\|f\|_{ H^{\frac{d+2 k+1}{2}}(\Omega)}.
\end{aligned}
\end{equation}    
\end{theorem}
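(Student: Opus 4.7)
The plan is to combine a continuous integral representation of $f$ as a superposition of ReLU$^k$ ridge functions on the sphere $S^d$ with a quasi-uniform cubature argument, exploiting the Sobolev smoothness of the representing density to match the greedy rate of Theorem~1 with a nonadaptive grid. In spirit, this converts the "adaptive dictionary selection" of the greedy algorithm into a "deterministic grid plus smoothness" argument.

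The first step is to invoke a Petrushev-type plane-wave / Radon-transform decomposition. For $f \in H^{(d+2k+1)/2}(\Omega)$, after a suitable extension to $\mathbb{R}^d$, one writes
\begin{equation*}
f(x) = \int_{S^d} g(\theta)\, \sigma_k(\theta \cdot \tilde x)\, d\sigma(\theta), \qquad x \in \Omega,
\end{equation*}
where $\tilde x = (x,1)$ and the density $g$ lies in some Sobolev space $H^s(S^d)$ with a bound $\|g\|_{H^s(S^d)} \lesssim \|f\|_{H^{(d+2k+1)/2}(\Omega)}$. The index $s$ is obtained by balancing the $k$ orders of smoothness supplied by the kernel $\sigma_k$ against the $m$ orders demanded by the target norm $H^m(\Omega)$; the precise threshold $(d+2k+1)/2$ on $f$ is exactly what is required so that $g$ has enough Sobolev regularity on $S^d$ to yield the advertised rate.

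The second step is a cubature construction on the prescribed points. Using quasi-uniformity of $\{\theta_j\}_{j=1}^n$, one builds positive weights $w_j \asymp n^{-1}$ (e.g.\ from the masses of Voronoi cells on $S^d$) and sets
\begin{equation*}
a_j := w_j\, g(\theta_j), \qquad f_n(x) := \sum_{j=1}^n a_j\, \phi_j(x) \in L_n^k.
\end{equation*}
The coefficient constraint is then verified directly, since $|a_j| \lesssim n^{-1}\|g\|_{L^\infty(S^d)}$ gives
\begin{equation*}
\Bigl(n \sum_{j=1}^n a_j^2\Bigr)^{1/2} \lesssim \|g\|_{L^\infty(S^d)} \lesssim \|f\|_{H^{(d+2k+1)/2}(\Omega)},
\end{equation*}
via Sobolev embedding on the sphere, so $f_n \in L_{n,M}^k$ for a uniform $M$ depending on the $H^{(d+2k+1)/2}$ norm of $f$.

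The third step, which I expect to be the technical heart of the argument, is to convert the pointwise spherical cubature error into the $H^m(\Omega)$ bound. Differentiating $\sigma_k(\theta \cdot \tilde x)$ with respect to $x$ produces factors of $\omega^\alpha\,\sigma_{k-|\alpha|}(\theta \cdot \tilde x)$, which lose regularity in $\theta$ near the singular hyperplane $\theta \cdot \tilde x = 0$ and become merely distributional when $|\alpha|=k$. I would split the sphere integral into a smooth region, where a standard Sobolev cubature estimate on quasi-uniform grids gives an error of order $n^{-s/d}\|g\|_{H^s(S^d)}$ contributing the $n^{-(2(k-m)+1)/(2d)}$ factor, and a thin neighborhood of the singular set whose small measure is offset by a Bernstein-type inequality transferring regularity in $\theta$ to regularity in $x$ and yielding the auxiliary $n^{-1/2}$. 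Balancing the widths of these two pieces and combining with the control of $\|g\|_{H^s(S^d)}$ by $\|f\|_{H^{(d+2k+1)/2}(\Omega)}$ then gives the claimed rate. The most delicate part is verifying that a generic quasi-uniform point set—without polynomial exactness assumptions—supports a cubature rule with the required Sobolev error bound uniformly in $x$, which is presumably where the main effort in \cite{LMX2025} is concentrated.
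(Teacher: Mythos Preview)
This theorem is not proved in the present paper; it is quoted from \cite{LMX2025}. There is thus no in-paper proof to compare against, and your proposal stands or falls on its own.

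The architecture you outline---an integral representation of $f$ over $S^d$ with density $g$, discretization at the prescribed quasi-uniform nodes with Voronoi-type weights, and verification of the $L_{n,M}^k$ constraint via Sobolev embedding---is a natural and plausible skeleton for such a result, and is likely close in spirit to what \cite{LMX2025} does.

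The gap is in Step~3. You attribute the two factors in the target rate to two separate regions (a smooth bulk and a thin singular strip) and then ``balance the widths,'' but a regional splitting produces a \emph{sum} of errors, and balancing minimizes the maximum rather than yielding the product; as written, this cannot deliver $n^{-1/2}\cdot n^{-(2(k-m)+1)/(2d)}$. Moreover, you identify the bulk term with a cubature bound $n^{-s/d}\|g\|_{H^s}$ taking $s=(k-m)+\tfrac12$, but that is the regularity of the \emph{kernel} $\theta\mapsto\partial_x^\alpha\sigma_k(\theta\cdot\tilde x)$, not of $g$---the cubature acts on the product $g(\theta)K(\theta,x)$, and you have not explained how the hypothesis $f\in H^{(d+2k+1)/2}$ feeds into the effective smoothness of that product. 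The ``Bernstein inequality transferring regularity in $\theta$ to regularity in $x$'' has no clear role here and does not manufacture an independent $n^{-1/2}$. A workable Step~3 would instead track the full Sobolev regularity of the integrand on $S^d$ (the target exponent is $n^{-s/d}$ with the single index $s=\tfrac{d}{2}+(k-m)+\tfrac12$, so the missing $\tfrac{d}{2}$ must come from $g$), or equivalently approximate $g$ by a spherical polynomial of degree $L\asymp n^{1/d}$, handle that part via a Marcinkiewicz--Zygmund-type quadrature identity on quasi-uniform nodes, and estimate the remainder against the limited smoothness of the ReLU$^k$ kernel.
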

The theoretical finding implies that we can use ReLU$^k$ shallow neural networks with pre-fixed deterministic neurons in approximation functions and solving PDEs, which we will verify numerically in Section \ref{sec:numerical-experiments}. 

\begin{remark}
    The above theorem applies to functions in Sobolev spaces. Notably, one can further show that such Sobolev spaces and the Barron space are of comparable complexity in terms of metric entropy.
\end{remark}

\subsection{Tanh shallow neural networks}
\label{sec:tanh_theory}

We now recall several classical approximation results for shallow neural networks
with the $\tanh$ activation function which is very commonly used in 
the random feature method (RFM) and extreme learning machine (ELM) frameworks. 
Let $\Sigma_n(\tanh)$ denote the set of all single-hidden-layer neural networks
with $n$ hidden neurons and the $\tanh$ activation function
\begin{equation}
    \Sigma_n(\tanh) = \left\{ \sum_{i = 1 }^n a_i \tanh(\omega_i \cdot x + b_i), \omega_i \in \mathbb{R}^d, b_i \in \mathbb{R}, x \in \mathbb{R}^d \right\}.  
\end{equation}

\medskip
\noindent
In the work of Mhaskar~\cite{mhaskar1996neural}, 
the following convergence results were established.
For target functions $f$ belonging to the Sobolev space
$W^{r,p}([-1,1]^d)$ with $1 \leq p \leq \infty$, one has
\begin{equation}
\label{eq:tanh_poly_rate}
\inf_{f_n \in \Sigma_n(\tanh)} 
\|f - f_n\|_{L^p} 
\leq C\, n^{-r/d}\, \|f\|_{W^{r,p}}.
\end{equation}
This estimate shows that $\tanh$-activated shallow networks can achieve
algebraic convergence of order $\mathcal{O}(n^{-r/d})$ for functions of Sobolev regularity~$r$.


\medskip
\noindent
Earlier, Barron~\cite{barron1993universal} established an $L^2$ approximation rate
based on Fourier integral representations.
If $f$ has finite Barron norm
\[
\|f\|_{\mathcal{B}_1}
:= \int_{\mathbb{R}^d} \|\omega\|_2\, |\hat{f}(\omega)|\, d\omega < \infty,
\]
then
\begin{equation}
\label{eq:barron_rate}
\inf_{f_n \in \Sigma_n(\tanh)} 
\|f - f_n\|_{L^2}
\leq
C\, \frac{\|f\|_{\mathcal{B}_1}}{\sqrt{n}}.
\end{equation}
This $\mathcal{O}(n^{-1/2})$ rate was later extended to Sobolev norms 
by Siegel and Xu~\cite{siegel2020approximation}, who proved that
\begin{equation}
\label{eq:siegelxu_rate}
\inf_{f_n \in \Sigma_n(\tanh)} 
\|f - f_n\|_{H^m(\Omega)} 
\leq
C\, n^{-1/2}\, \|f\|_{B^{m+1}},
\end{equation}
where $B^{m+1}$ denotes a suitable Barron-type space.

\medskip
\noindent
These theoretical results collectively establish that $\tanh$ activations possess 
strong approximation capabilities when the neuron parameters are optimized.
However, none of these analyses provide an explicit rule for constructing or pre-fixing the nonlinear parameters $(\omega_i,b_i)$.
In practice, RFM and ELM typically assign these parameters randomly from a uniform distribution on an interval, without theoretical guidance on how such choices affect the convergence rate.
In this paper, motivated by the result of ReLU$^k$ shallow neural networks as shown in Theorem \ref{thm:lfns}, we investigate whether deterministic constructions 
of fixed neuron parameters can achieve comparable approximation performance to random sampling.
Two deterministic schemes are proposed and studied numerically in 
Section~\ref{sec:numerical-experiments}.

\section{Methodology}\label{sec:methods}

In this section, we describe the two formulations used in our numerical study of linearized shallow neural networks for solving PDEs, namely, the variational and collocation formulations. 
The corresponding numerical methods are referred to as variational and collocation methods, respectively.

\subsection{Variational Method}

We introduce the variational method in the context of an $L^2$-minimization problem and an elliptic partial differential equation. 
For more general problem settings, we refer the reader to~\cite{Xu:2020}. 

We first consider the $L^2$-minimization problem. 
Given a target function $u \in L^2(\Omega)$ and a subspace $V_n \subset L^2(\Omega)$, 
the variational problem seeks
\[
u_n = \arg\min_{v \in V_n} \|u - v\|_{L^2(\Omega)}.
\]
Equivalently, $u_n \in V_n$ satisfies the Galerkin orthogonality condition
\[
\langle u_n, v \rangle_{L^2(\Omega)} = \langle u, v \rangle_{L^2(\Omega)}, \quad \forall v \in V_n,
\]
where $\langle f,g \rangle_{L^2(\Omega)} := \int_\Omega f(x) g(x)\,dx$. 
Choosing basis functions $\{\phi_j\}_{j=1}^n \subset V_n$ 
and writing $u_n = \sum_{j=1}^n a_j \phi_j$, 
the coefficients $a=(a_1,\dots,a_n)^\top$ are determined by the linear system
\[
M a = b,
\]
with entries
\[
M_{ij} = \int_\Omega \phi_j(x)\phi_i(x)\,dx, 
\qquad
b_i = \int_\Omega u(x)\phi_i(x)\,dx .
\]

We now consider the elliptic PDE
\begin{align*}
- \Delta u + u &= f \quad \text{in } \Omega, \\
\frac{\partial u}{\partial n} &= 0 \quad \text{on } \partial \Omega.
\end{align*}
The variational method seeks $u_n \in V_n$ such that
\begin{equation} \label{eq:variational-neumann}
a(u_n,v) = \ell(v), \quad \forall v \in V_n,    
\end{equation}
with bilinear form and linear functional
\[
a(w,v) = \int_\Omega \nabla w(x)\cdot \nabla v(x)\,dx 
+ \int_\Omega w(x) v(x)\,dx, 
\qquad 
\ell(v) = \int_\Omega f(x) v(x)\,dx .
\]
In other words, $u_n$ is the Galerkin projection of the true solution $u$ 
with respect to the energy inner product induced by $a(\cdot,\cdot)$. 

Similarly, writing $u_n = \sum_{j=1}^n a_j \phi_j$, 
the coefficients $a=(a_1,\dots,a_n)^\top$ are determined by the linear system
\[
A a = b,
\]
with entries
\[
A_{ij} = \int_\Omega \nabla \phi_j(x)\cdot \nabla \phi_i(x)\,dx 
+ \int_\Omega \phi_j(x)\phi_i(x)\,dx, 
\qquad
b_i = \int_\Omega f(x)\phi_i(x)\,dx .
\]

\begin{remark}
For many PDEs, the variational formulation admits an equivalent energy minimization formulation, which is widely used in neural-network-based PDE solvers.
This principle underlies the \emph{deep Ritz method}~\cite{EY:2018},
the \emph{finite neuron method}~\cite{Xu:2020},
and recent greedy algorithms for neural network optimization in PDEs~\cite{SHJHX:2023,XX:2025}.
\end{remark}

\subsection{Collocation method}
We consider the following linear boundary-value problem:
\begin{equation}\label{model_pde}
\begin{aligned}
     \mathcal{L}u(x) & = f(x), \quad && x \in \Omega, \\
     \mathcal{B}u(x) & = g(x), \quad && x \in \partial \Omega, 
\end{aligned}
\end{equation}
where $\mathcal{L}$ and $\mathcal{B}$ denote the linear differential and boundary operators, respectively. 
For simplicity, we restrict our attention to scalar-valued solutions. 

The collocation method enforces the PDE residual at interior points of the domain and the boundary condition at boundary points, using a finite set of points called collocation points.
It leads to a least-squares problem and is widely used in PINNs~\cite{raissi2019physics}, random feature methods~\cite{chen2022bridging}, and extreme learning machines~\cite{HZS:2006,wang2024extreme}.
Let $u_n(x) := \sum_{i = 1}^n a_i \phi_i(x)$ be the shallow neural network with $n$ pre-fixed neurons $\{ \phi_i(x)\}_{i = 1}^n$. 
Let $C_I$ be the set of interior collocation points ($|C_I|=N_I$) 
and $C_B$ the set of boundary collocation points ($|C_B|=N_B$), 
with $C := C_I \cup C_B$. 
The loss function is given by
\begin{equation}\label{loss}
    L(\mathbf{a})= \tfrac{1}{2} \sum_{x_i \in C_I} \lambda_I^i \bigl( \mathcal{L} u_n(x_i) - f(x_i) \bigr)^2 
    +  \tfrac{1}{2} \sum_{x_j^b \in C_B} \lambda_B^j  \bigl( \mathcal{B} u_n(x_j^b) - g(x_j^b) \bigr)^2,
\end{equation}
where $\mathbf{a}$ denotes the coefficients $a_{j}$ in~\eqref{eq:sigma_n_M}, 
and $\lambda_I^i, \lambda_B^j>0$ are penalty parameters that scale the contributions 
of different collocation points. 

In matrix form, this becomes
\begin{equation}\label{loss-matrix}
    \min_{\mathbf{a}} \; \tfrac{1}{2}\|\Lambda_I(D_I \mathbf{a} - \mathbf{f}) \|_2^2 
    + \tfrac{1}{2}\| \Lambda_B (D_B \mathbf{a}- \mathbf{g} ) \|_2^2. 
\end{equation}
In the above matrix form, $\Lambda_I = \text{diag}(\lambda_I^1,...,\lambda_I^{N_I})$, $\Lambda_B = \text{diag}(\lambda_B^1,...,\lambda_B^{N_B})$ are two diagonal matrices that scale the least-squares problem. 
The matrices $D_I$ and $D_B$ are given by $[D_I]_{i,j} = \mathcal{L}\phi_{j}(x_i) $, and $[D_B]_{i,j} = \mathcal{B}\phi_{j}(x_i) (x^b_i)$. 
The two vectors $\mathbf{f}$ and $\mathbf{g}$ are the values of the functions $f$ and $g$ evaluated at the collocation points, respectively.

In the special case of $L^2$-minimization, the problem reduces to a standard discrete $\ell^2$-regression.
The minimizer $\mathbf{a}$ is computed using a least-squares solver in both RFM and ELM.


\section{Numerical experiments}\label{sec:numerical-experiments}
In this section, we provide a numerical study of the variational and collocation formulations.
For ReLU$^k$ shallow neural networks, we numerically confirm that prescribing nonlinear layer parameters via a quasi-uniform grid on the sphere yields the optimal approximation rate under both formulations.
This verifies the most recent theoretical results on linearized ReLU$^k$ shallow neural networks, as stated in Theorem \ref{thm:lfns}.
Our experiments further reveal that, for both ReLU$^k$ and $\tanh$ activations, the linear system arising in the variational formulation is extremely ill-conditioned, which quickly becomes a bottleneck to achieving high accuracy when scaling the neural network size.

Moreover, using a second-order elliptic PDE example, we verify the convergence of linearized ReLU$^k$ shallow neural networks. 
We also demonstrate that for ReLU$^k$ networks, quasi–Monte Carlo (QMC) constructions of hidden-layer parameters outperform random sampling from the uniform distribution on the sphere $S^d$, indicating the need for high-quality quasi-uniform grids on $S^d$.
This approach can be applied in dimensions higher than three, where constructing a quasi-uniform grid on the hypersphere is nontrivial.
For neural networks with a $\tanh$ activation function, we show that linearized neural networks can be constructed using two deterministic schemes that we propose.
The resulting linearized neural networks achieve accuracy comparable to those constructed through random sampling, as commonly seen in ELM and RFM.
These results indicate that randomness may not be necessary to achieve high accuracy when using linearized neural networks.

In our experiments, for the variational formulation, numerical integration is required to assemble the linear system. 
In one, two, and three dimensions, we employ piecewise Gauss–Legendre quadrature on uniform subdomains, while in higher dimensions we rely on quasi–Monte Carlo methods with a large number of samples~\cite{Caflisch:1998,MC:1995}. 
The number of integration points is chosen to guarantee sufficient accuracy and is specified for each experiment. 
For the collocation formulation, the number of collocation points is likewise detailed in the corresponding experiments.

\subsection{ReLU$^k$ Shallow Neural Networks}
\label{subsec:relul2}

We first consider $L^2$-minimization problems in the domain $\Omega = (-1,1)^d$ 
for dimensions $d=1$--$6$. 
Throughout this subsection the target function is taken to be
\begin{equation}
    u(x) = \prod_{i=1}^d \sin\!\left(\tfrac{\pi x_i}{2}\right).
\end{equation}
 
 Before proceeding to the numerical results, we introduce how to construct a quasi-uniform grid points on a sphere $S^d$.
 Constructing quasi-uniform grid points on the sphere $S^d$ is straightforward for $d = 1, 2$.
In one dimension, we can use the polar coordinate representation. By dividing the angular variable uniformly, we obtain a uniform grid on the circle $S^1$.
In two dimensions, we can generate a uniform grid on $S^2$ using the following procedure \cite{gonzalez2010measurement}. For $n$ points, we first define the indices $i = 0.5, 1.5, \ldots, n - 0.5$. Then, for each index $i$, we compute:
\begin{align*}
\phi = \arccos(1 - 2i/n), \quad 
\theta = \pi(1 + \sqrt{5})i.
\end{align*}
The coordinates of the corresponding points are then given by:
\begin{align*}
x = \sin(\phi)\cos(\theta), \quad 
y = \sin(\phi)\sin(\theta), \quad 
z = \cos(\phi).
\end{align*}
This construction produces a spiral pattern of points that forms an approximately uniform grid on the sphere. The method is based on the golden ratio $\varphi = \frac{1 + \sqrt{5}}{2}$, which helps ensure good spacing between points.
However, for higher dimensions ($d \geq 3$), constructing a quasi-uniform grid on $S^d$ becomes a nontrivial task.
Therefore, for convenience, we instead employ a uniform distribution on $S^d$ to randomly sample grid points, similar to the random feature method. In this case, the grid points are not strictly quasi-uniform distributed, but the same approximation rate still holds (up to logarithm) with high probability (see \cite[Theorem 6.2]{LMX2025}).

\subsubsection{Continuous $L^2$-minimization}
In this experiment, we use ReLU$^2$ shallow neural networks for solving the $L^2$-minimization problem in dimensions 1-6. 
In the variational formulation, the approximation $u_n \in L_n^k$ is obtained 
as the $L^2$-projection of $u$, requiring the evaluation of continuous inner products. 
For numerical integration, in 1D, we divide the interval into 1024 uniform subintervals and use a Gauss quadrature of order 5 in each subinterval.
In 2D, we divide the square domain into $400^2$ uniform subdomains and use a Gauss quadrature of order 5 in each subdomain.
In 3D, we divide the cubic domain into $50^3$ uniform subdomains and use a Gauss quadrature of order 3 in each subdomain.
In higher dimensions, we use the Quasi-Monte Carlo method with integration points generated by Sobol sequences.
The numbers of integration points are $8\times10^5$, $8\times10^5$, and $2\times10^6$ in 4D, 5D, and 6D, respectively.

Numerical results for dimensions $d=1$--$6$ are shown in 
Figure~\ref{fig:L2-variational}.  
Across all the dimensions tested, the numerical error decreases as expected with 
increasing number of neurons, matching the theoretically guaranteed approximation rate of ReLU$^2$ shallow neural networks. 

\begin{figure}[H]
    \centering
    \includegraphics[width=0.325\linewidth]{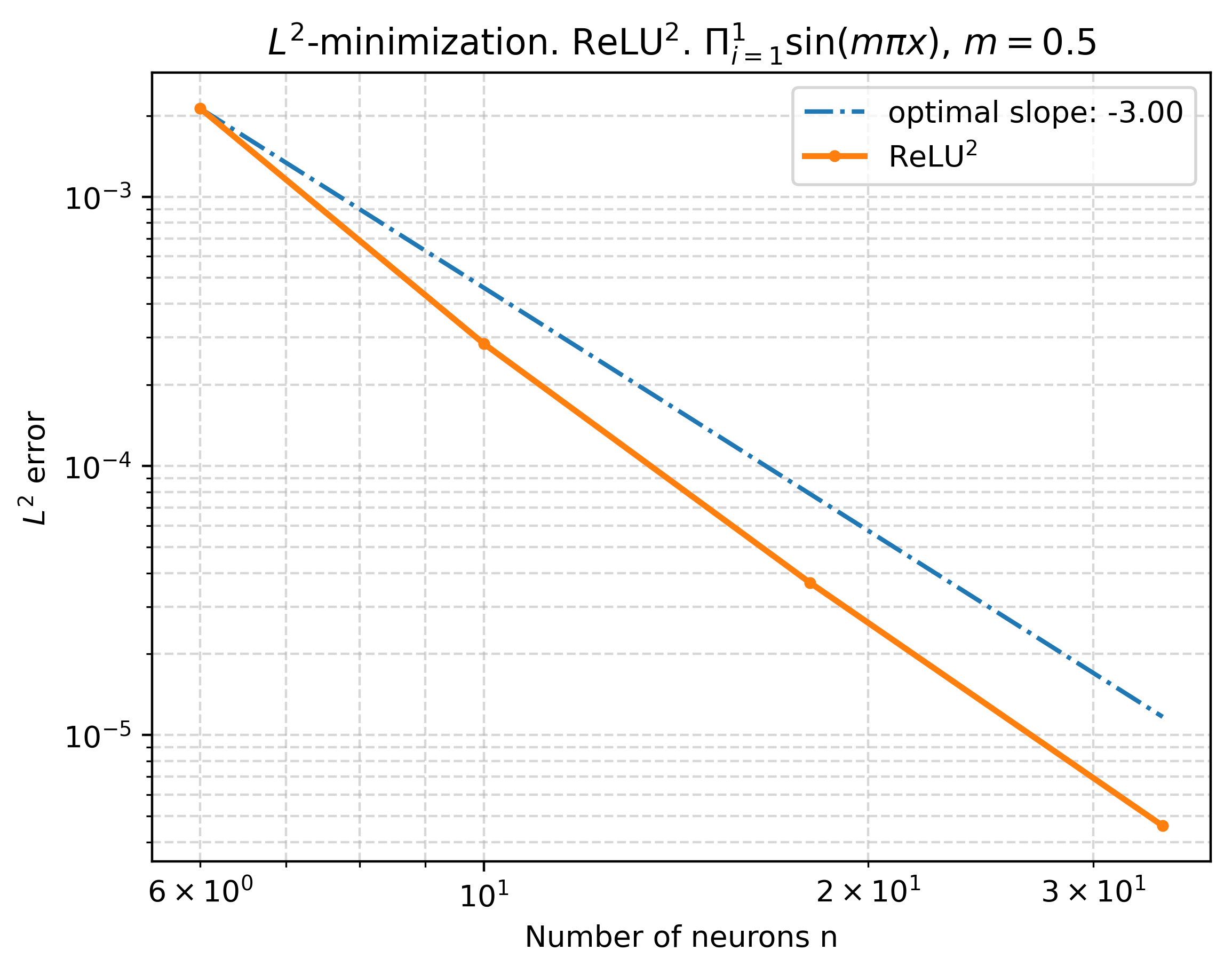}
    \includegraphics[width=0.325\linewidth]{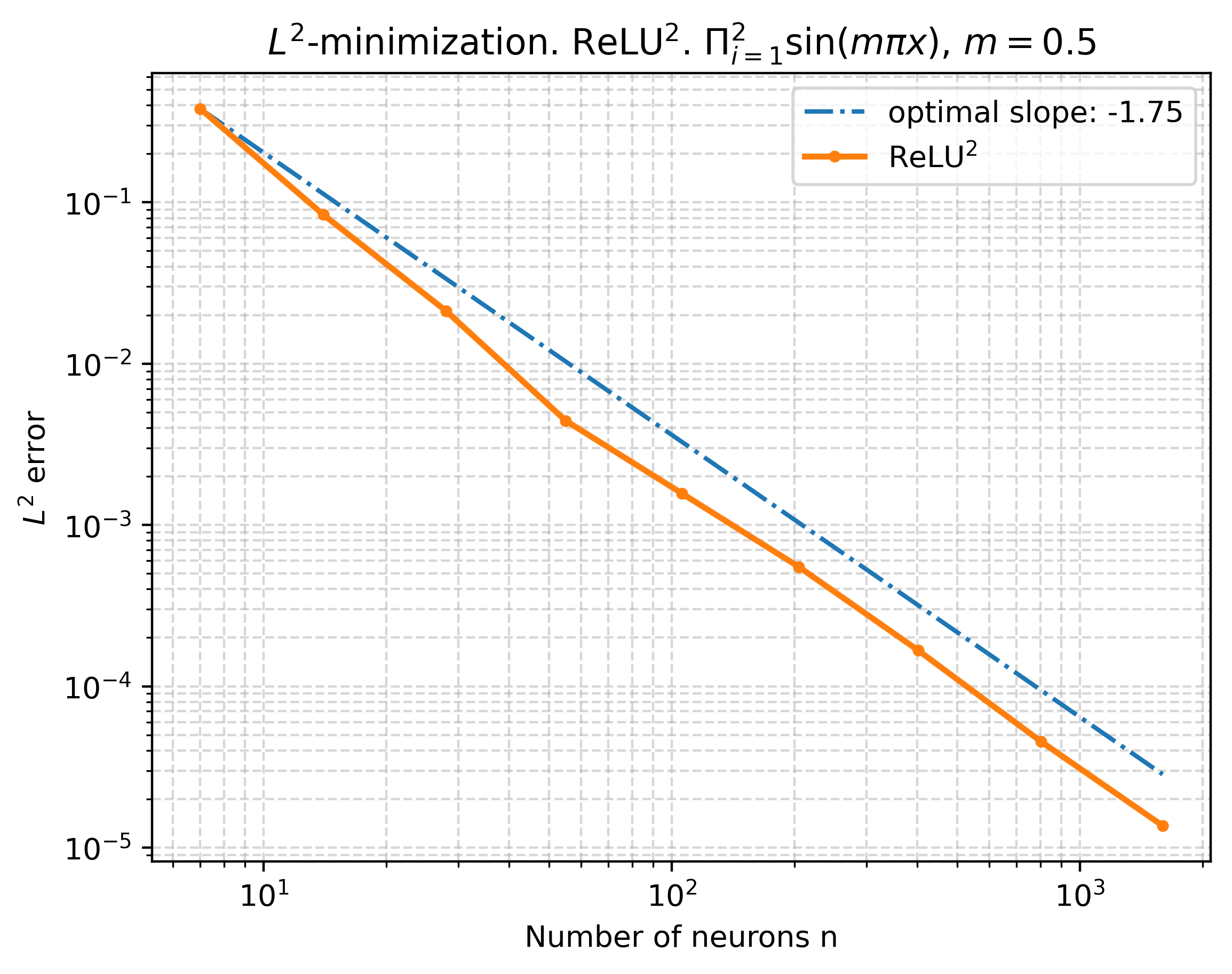}
    \includegraphics[width=0.325\linewidth]{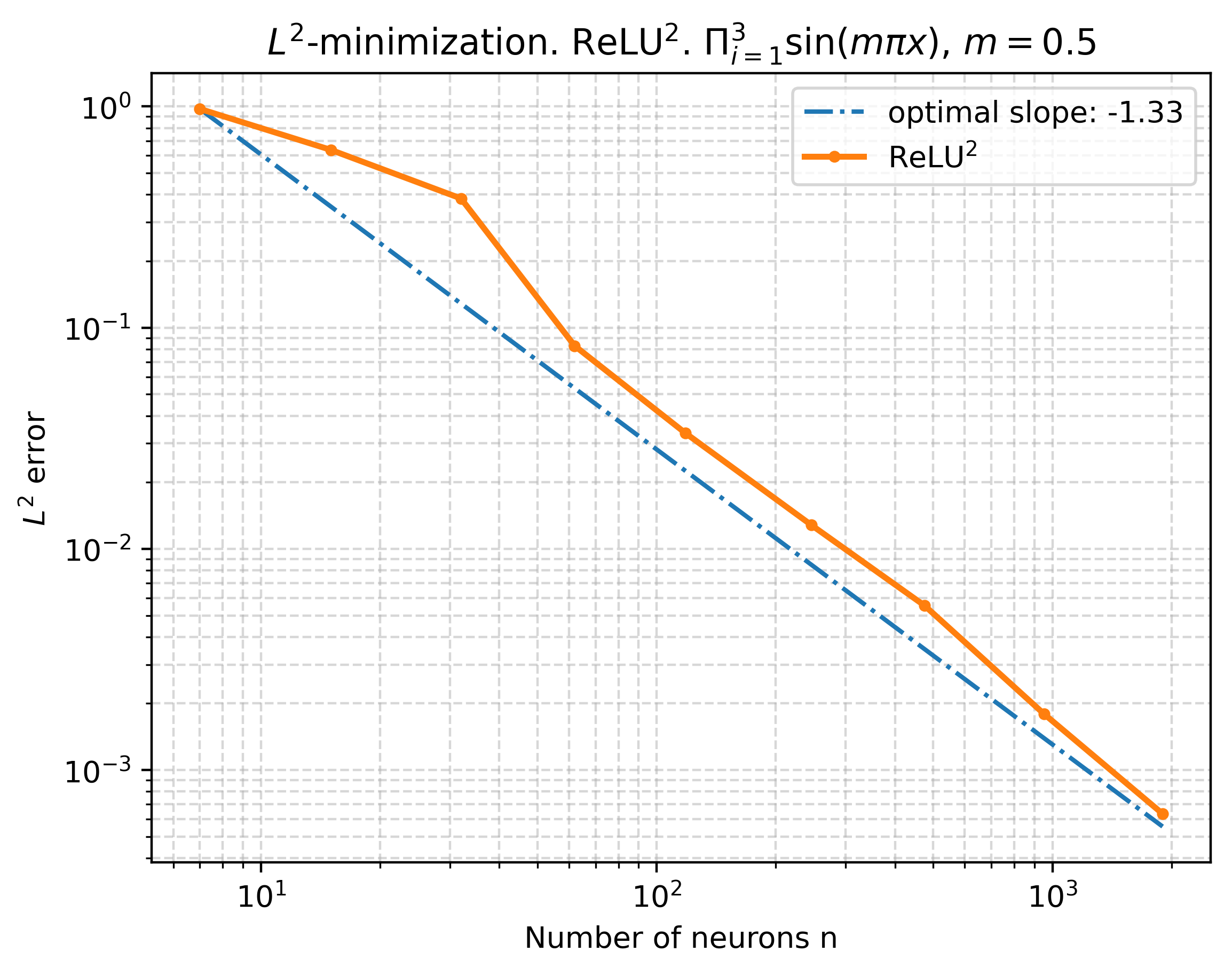} \\
    \includegraphics[width=0.325\linewidth]{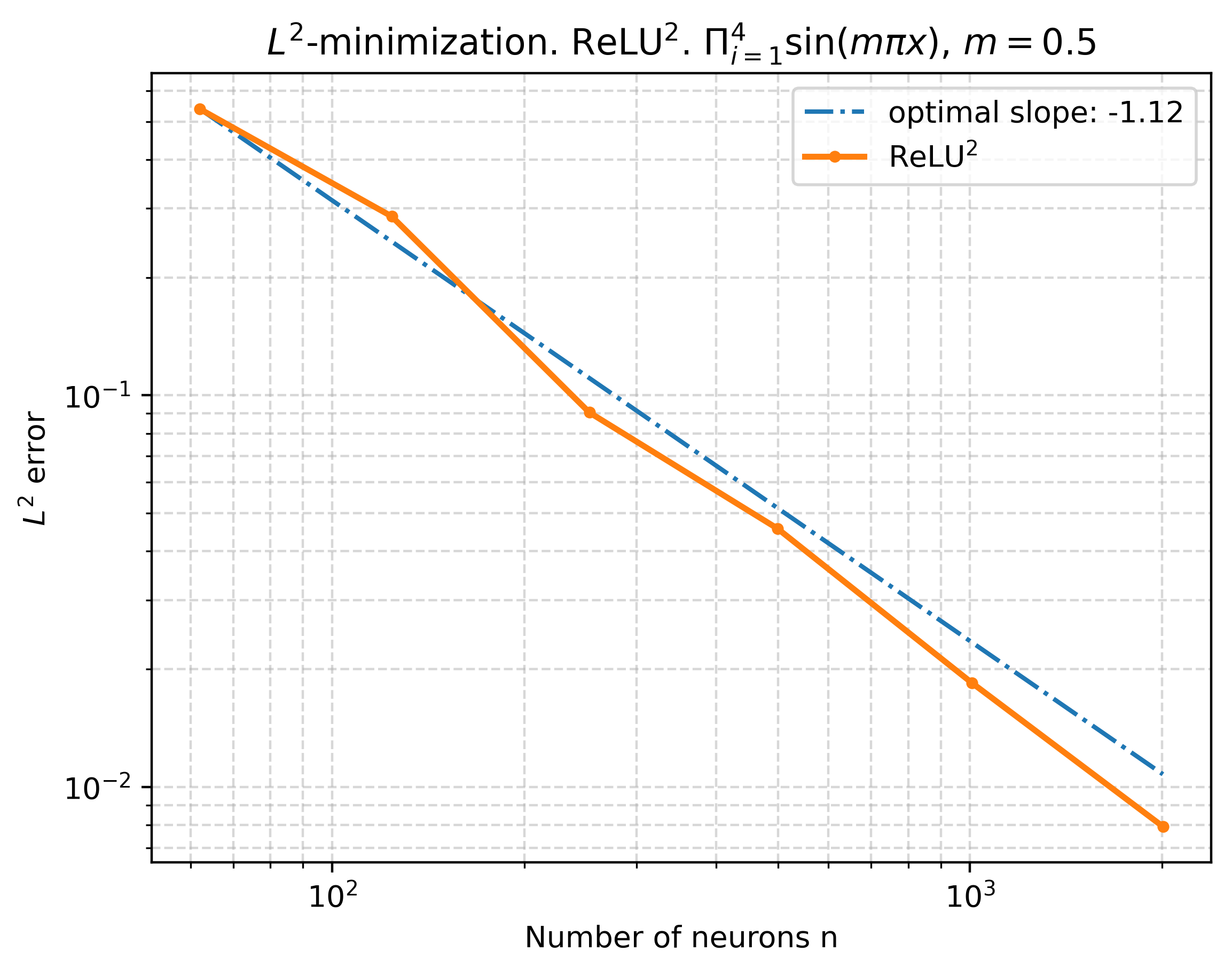}
    \includegraphics[width=0.325\linewidth]{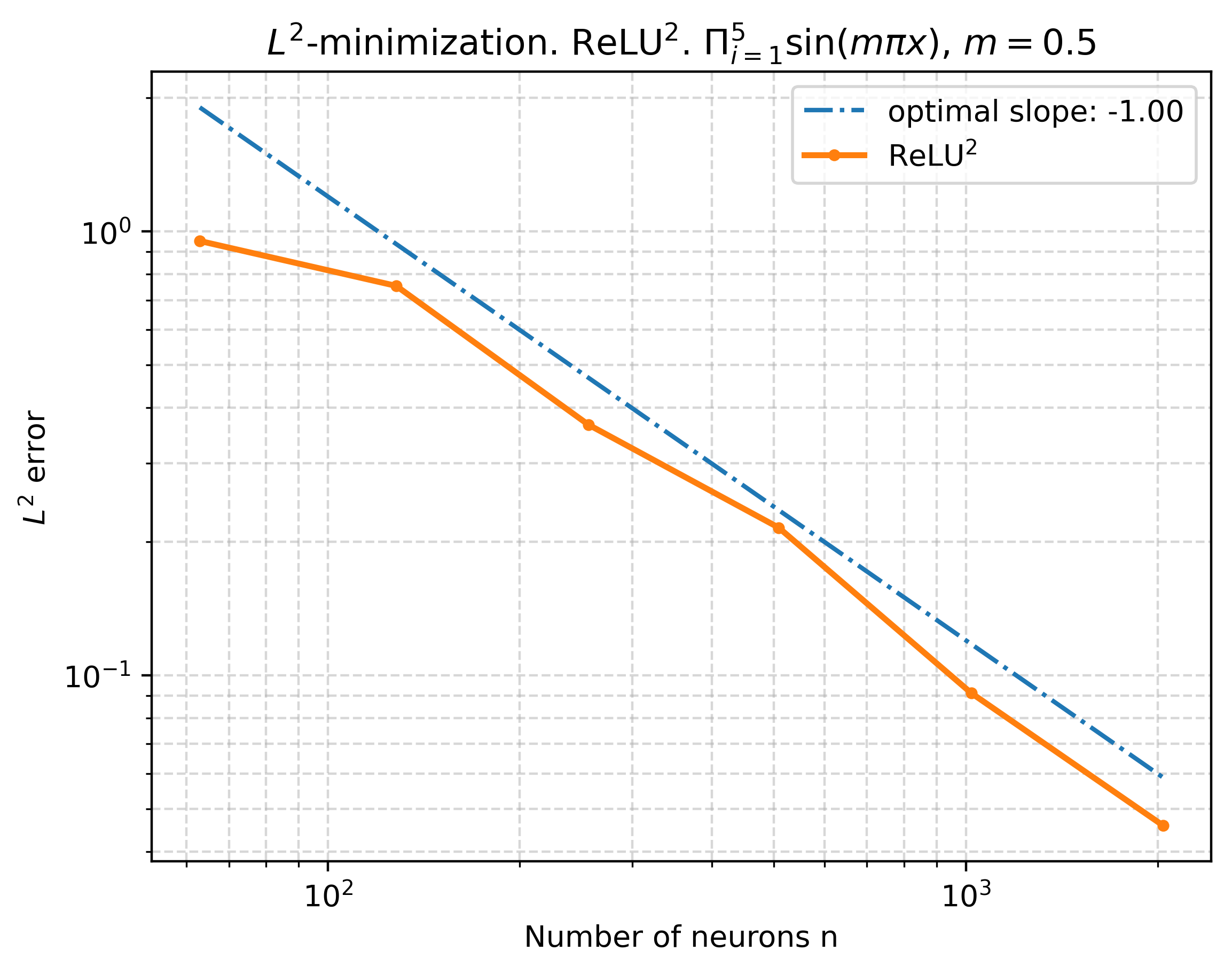}
    \includegraphics[width=0.325\linewidth]{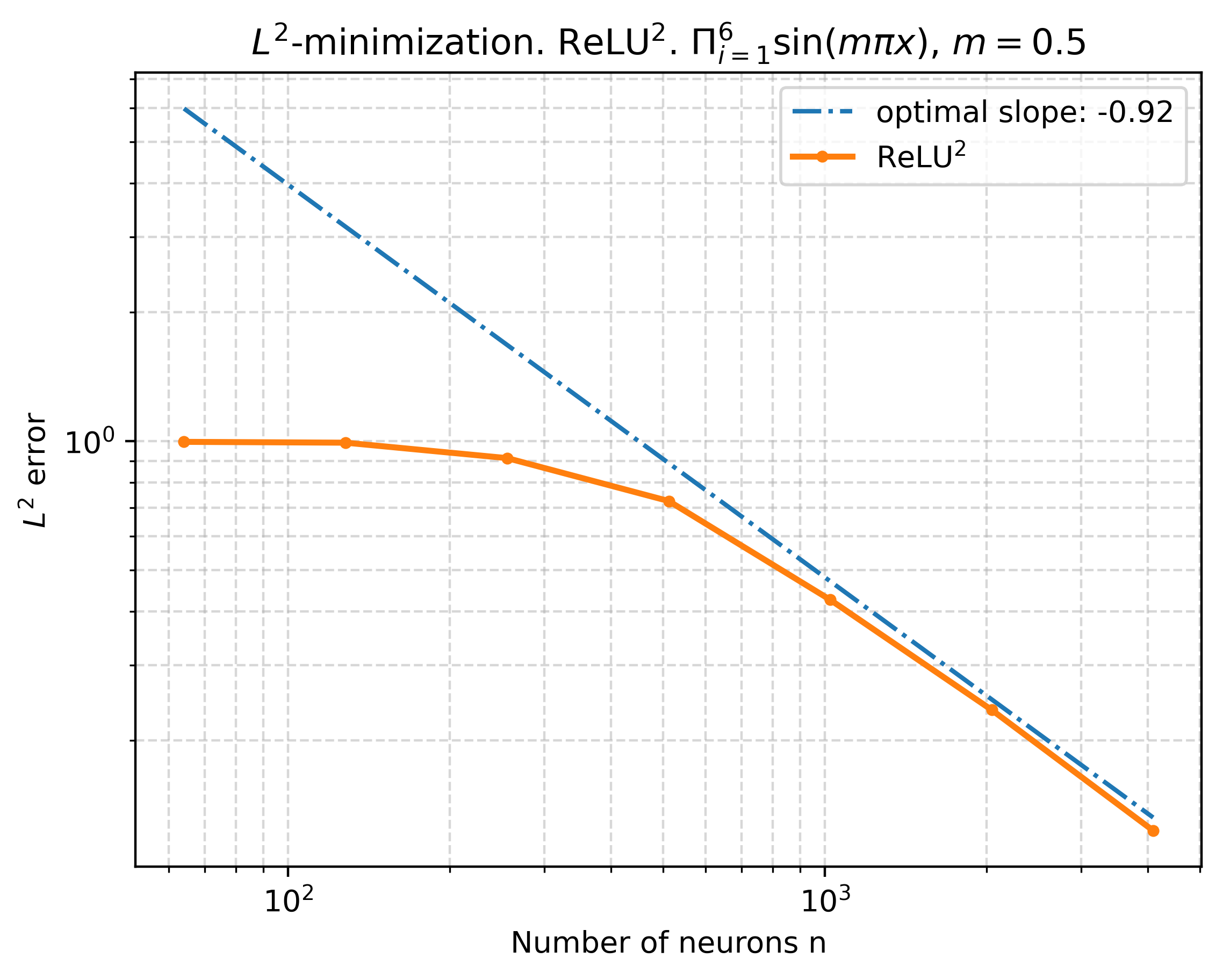} 
    \caption{Error decay for continuous $L^2$-minimization (variational formulation) 
    in dimensions $d=1$--$6$.}
    \label{fig:L2-variational}
\end{figure}

Additional experiments with ReLU activation are reported in Appendix~\ref{app:supp-l2}.  

\subsubsection{Discrete $\ell^2$-regression}  
We next consider the discrete least-squares regression problem, 
which corresponds to the collocation formulation introduced in 
Section~\ref{sec:methods}.  
In one dimension, the collocation points are chosen as a uniform grid of size $2048$ over the interval, including the boundary points. 
In two dimensions, a uniform $100\times100$ tensor-product grid is used over the domain, including the boundary points. 
In three dimensions, the collocation points are taken to be a uniform $100\times100\times100$ tensor-product grid over the domain, including the boundary points. 
For higher-dimensional problems, the collocation points are generated by a Quasi--Monte Carlo method using Sobol sequences. 
The numbers of integration points are $8\times10^5$, $8\times10^5$, and $2\times10^6$ in 4D, 5D, and 6D, respectively. 
The numerical errors are evaluated in the same manner as in the continuous $L^2$-minimization setting.
The least–squares problems are solved using \texttt{scipy.linalg.lstsq} 
with its default driver \texttt{gelsd}, which is based on a singular value 
decomposition.

The same target function $u$ is used in dimensions $d=1$--$6$, and ReLU$^2$ networks are used for the task. 
The results are reported in Figure~\ref{fig:L2-regression}.  
The observed error decay is consistent with the theoretically guaranteed 
approximation rate of ReLU$^2$ shallow neural networks across all tested dimensions.

\begin{figure}[H]
    \centering
    \includegraphics[width=0.325\linewidth]{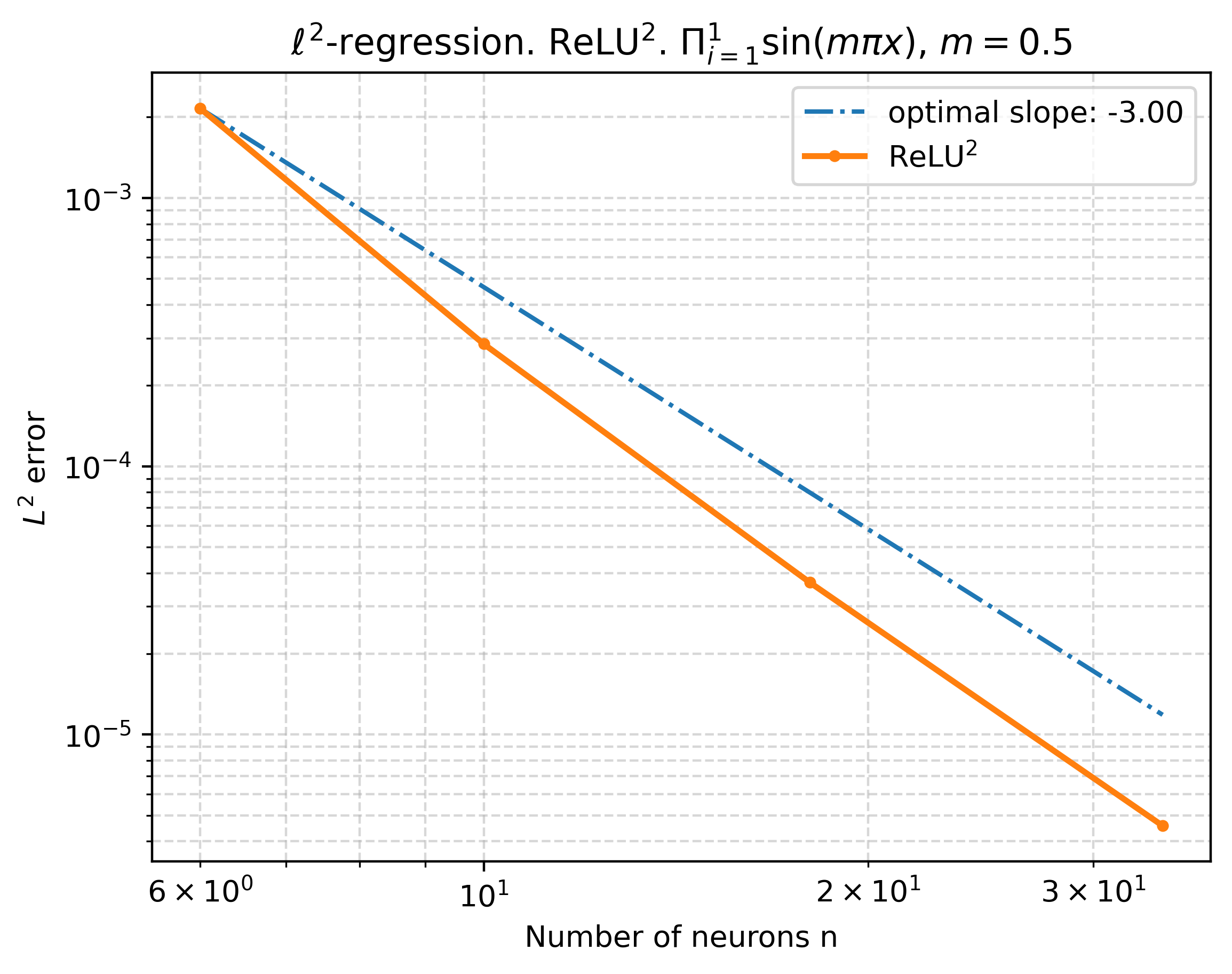}
    \includegraphics[width=0.325\linewidth]{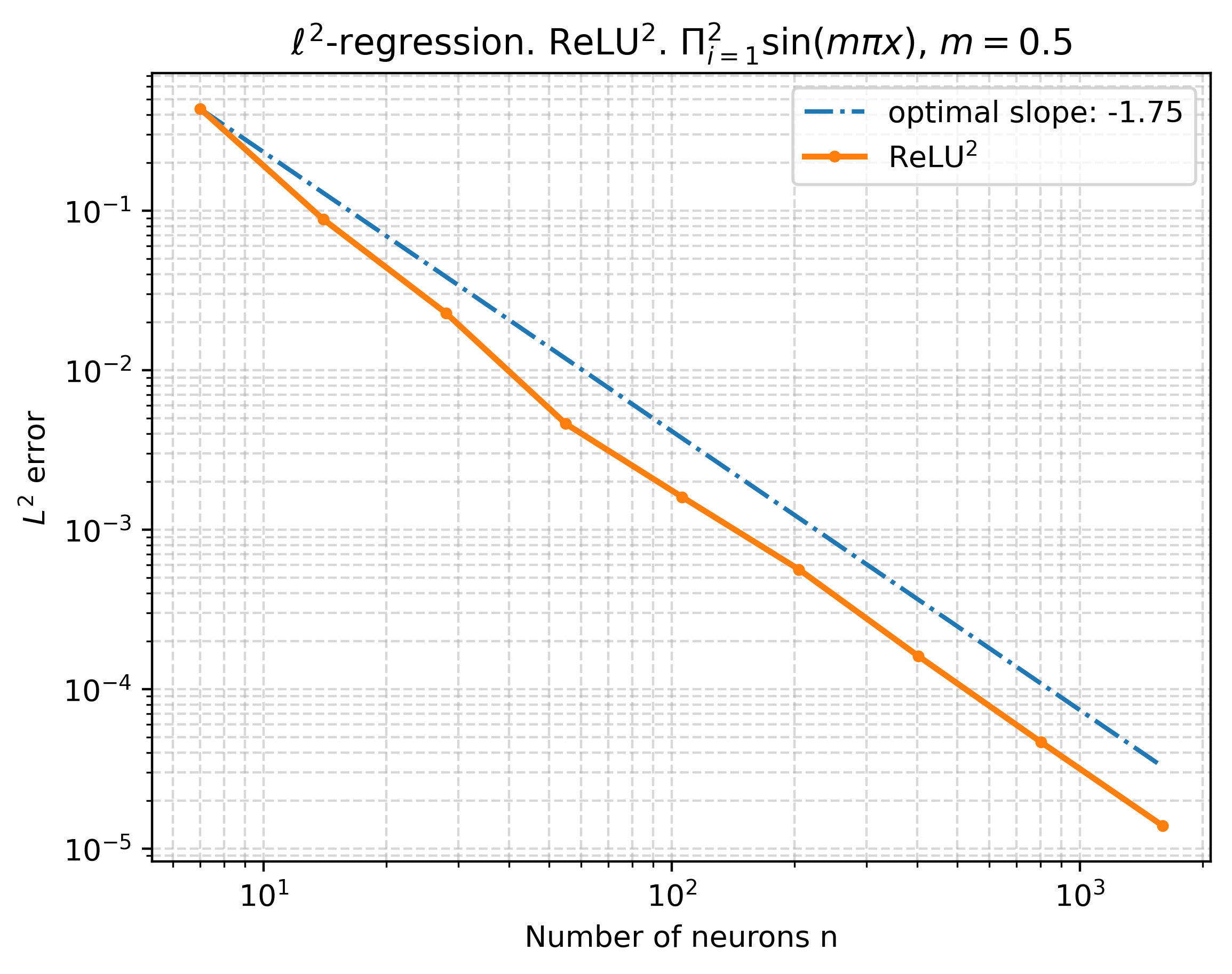}
    \includegraphics[width=0.325\linewidth]{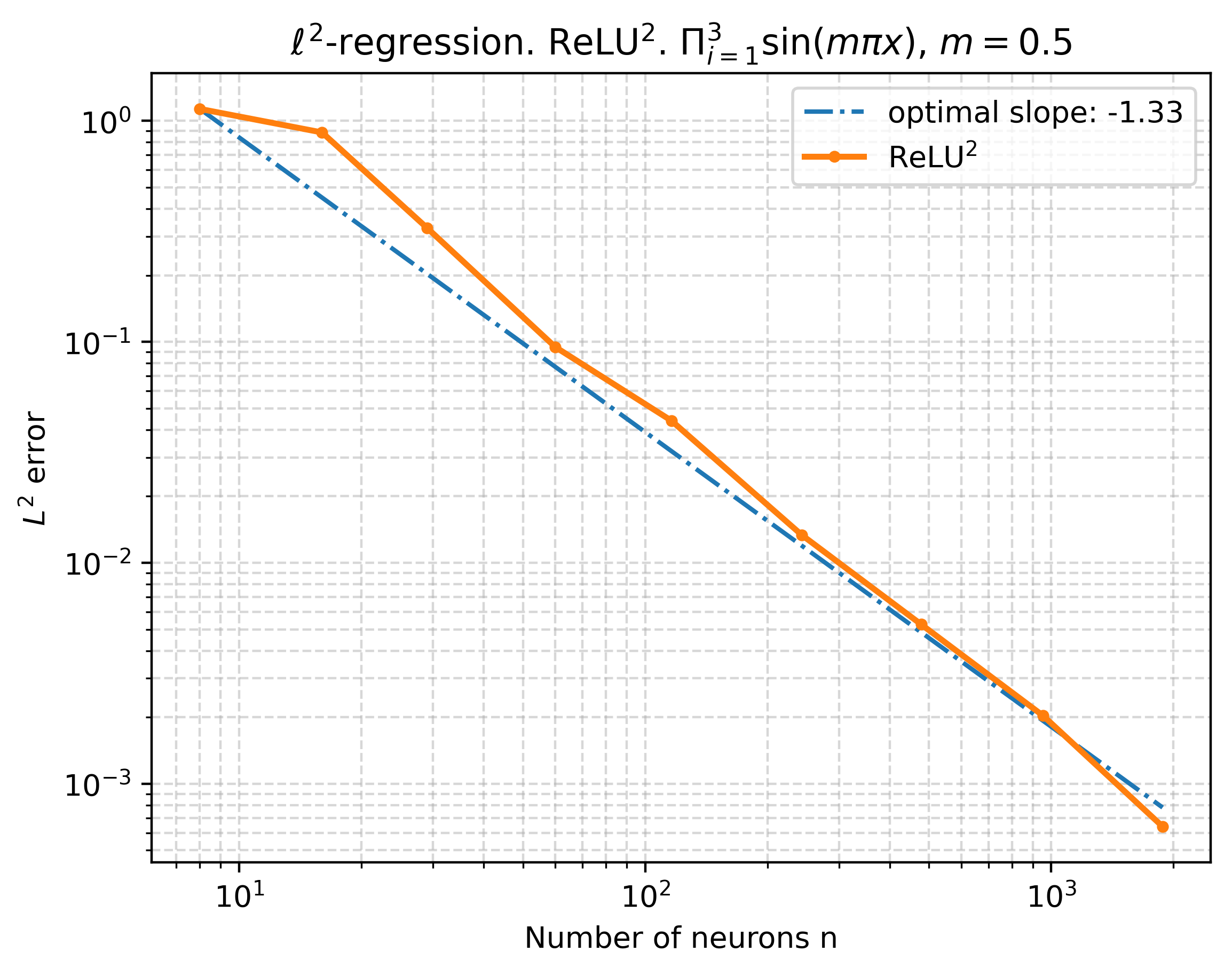} \\
    \includegraphics[width=0.325\linewidth]{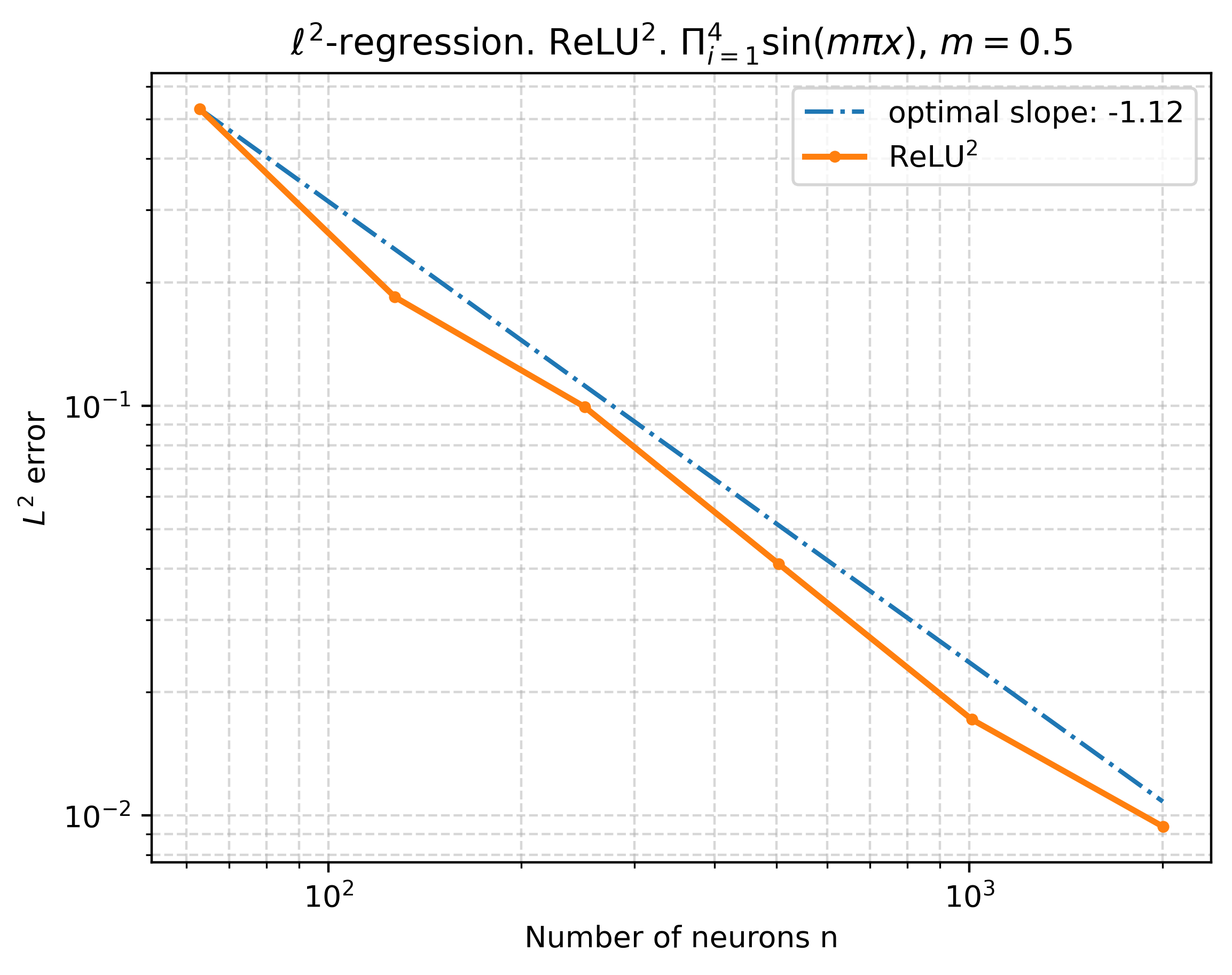}
    \includegraphics[width=0.325\linewidth]{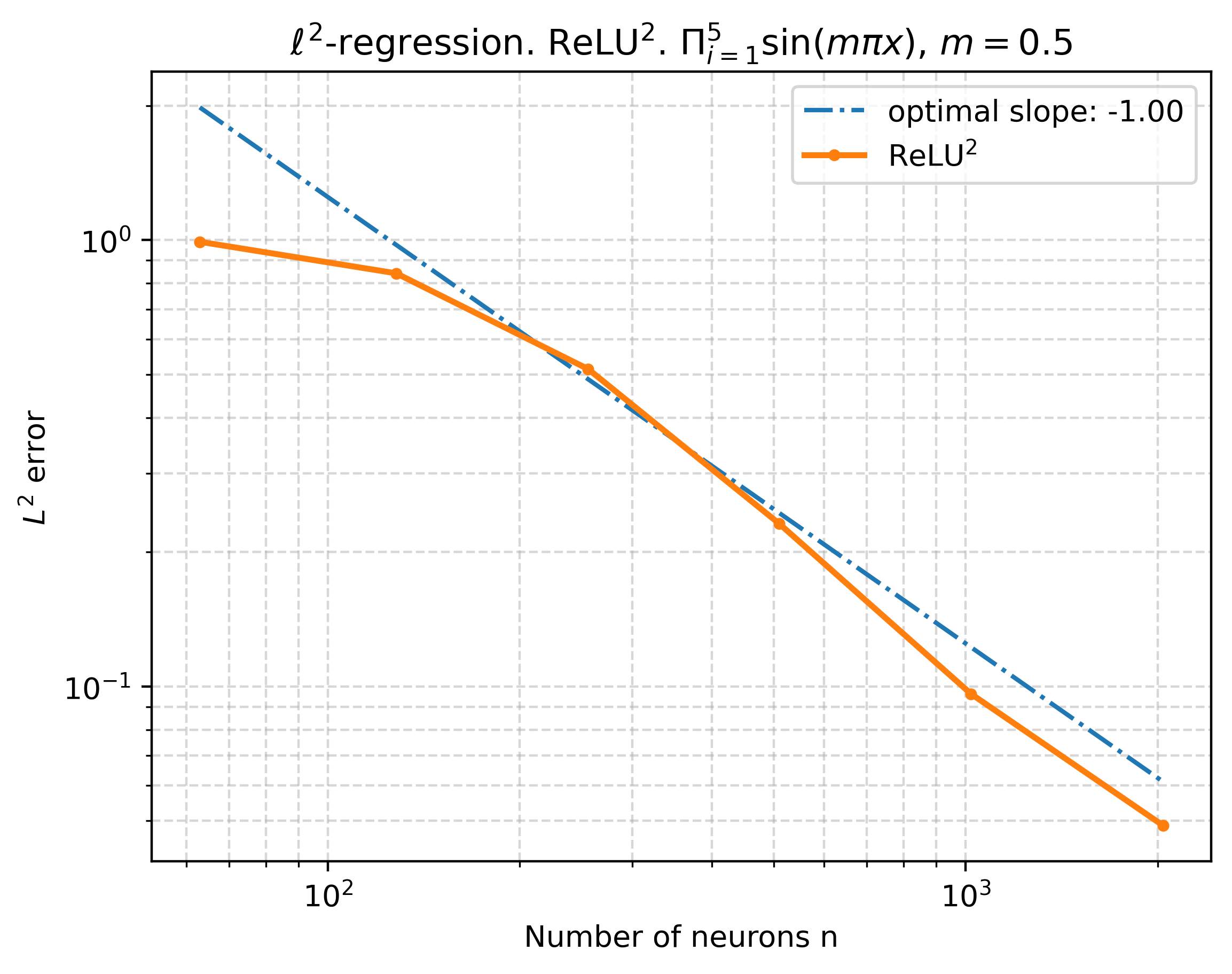}
    \includegraphics[width=0.325\linewidth]{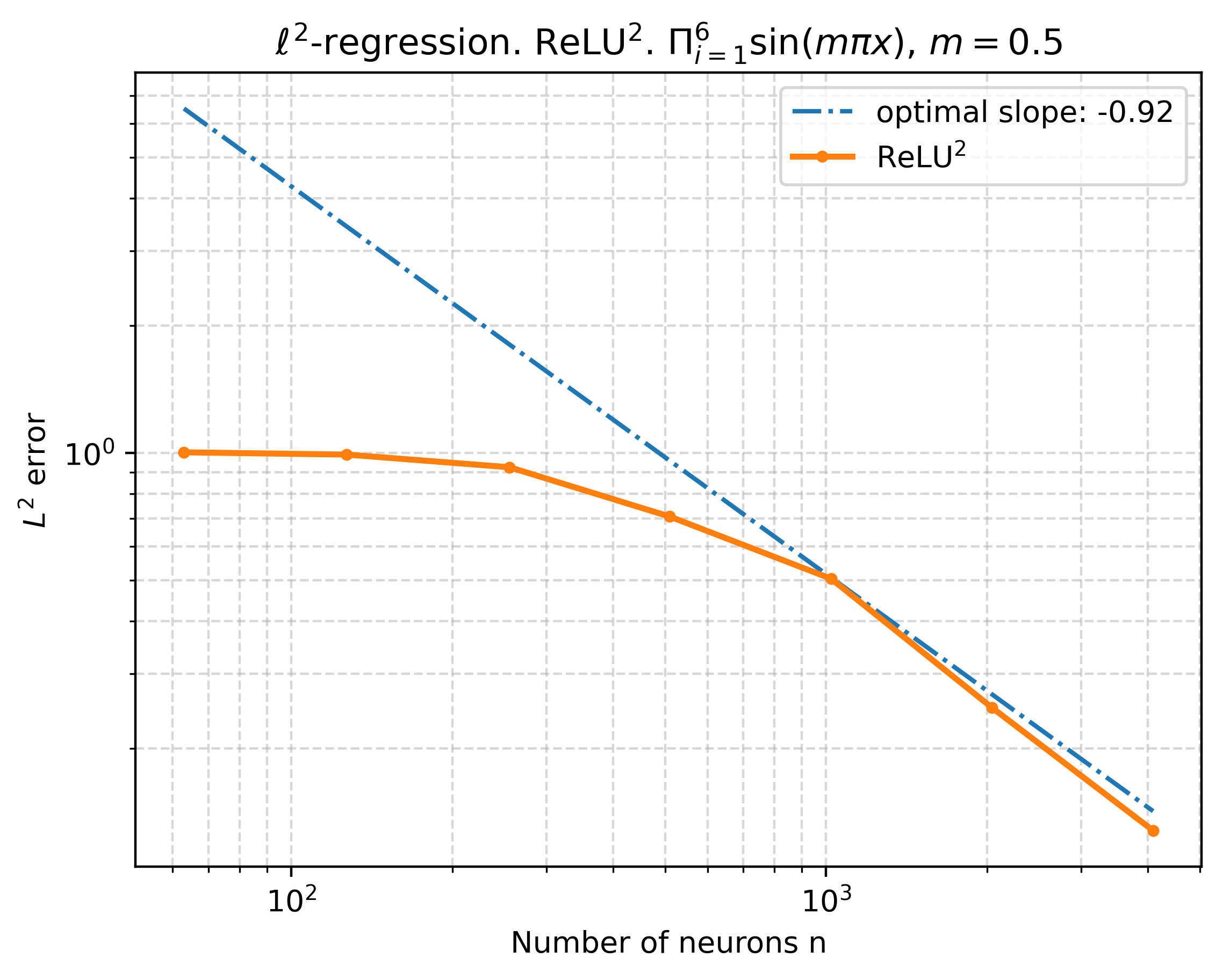} 
    \caption{Error decay for discrete $\ell^2$-regression (collocation formulation) 
    in dimensions $d=1$--$6$.}
    \label{fig:L2-regression}
\end{figure}

Additional experiments with ReLU activation are reported in Appendix~\ref{app:supp-l2}.  

\subsubsection{Condition numbers and numerical instability}
To better understand the numerical challenges associated with linearized neural networks of pre-determined neurons, we analyze the conditioning of the linear systems arising from the continuous $L^2$-minimization problems through examining the condition number of the mass matrix 
in different dimensions. 

Figures~\ref{fig:relu-condition} and~\ref{fig:relu2-condition} report the growth of condition numbers 
for ReLU and ReLU$^2$ activations, respectively. 
In all cases, the condition number increases rapidly with the number of neurons, and the fitted slopes (shown as dashed lines) are reported for reference. 

\begin{figure}[H]
    \centering
    \includegraphics[width=0.32\linewidth]{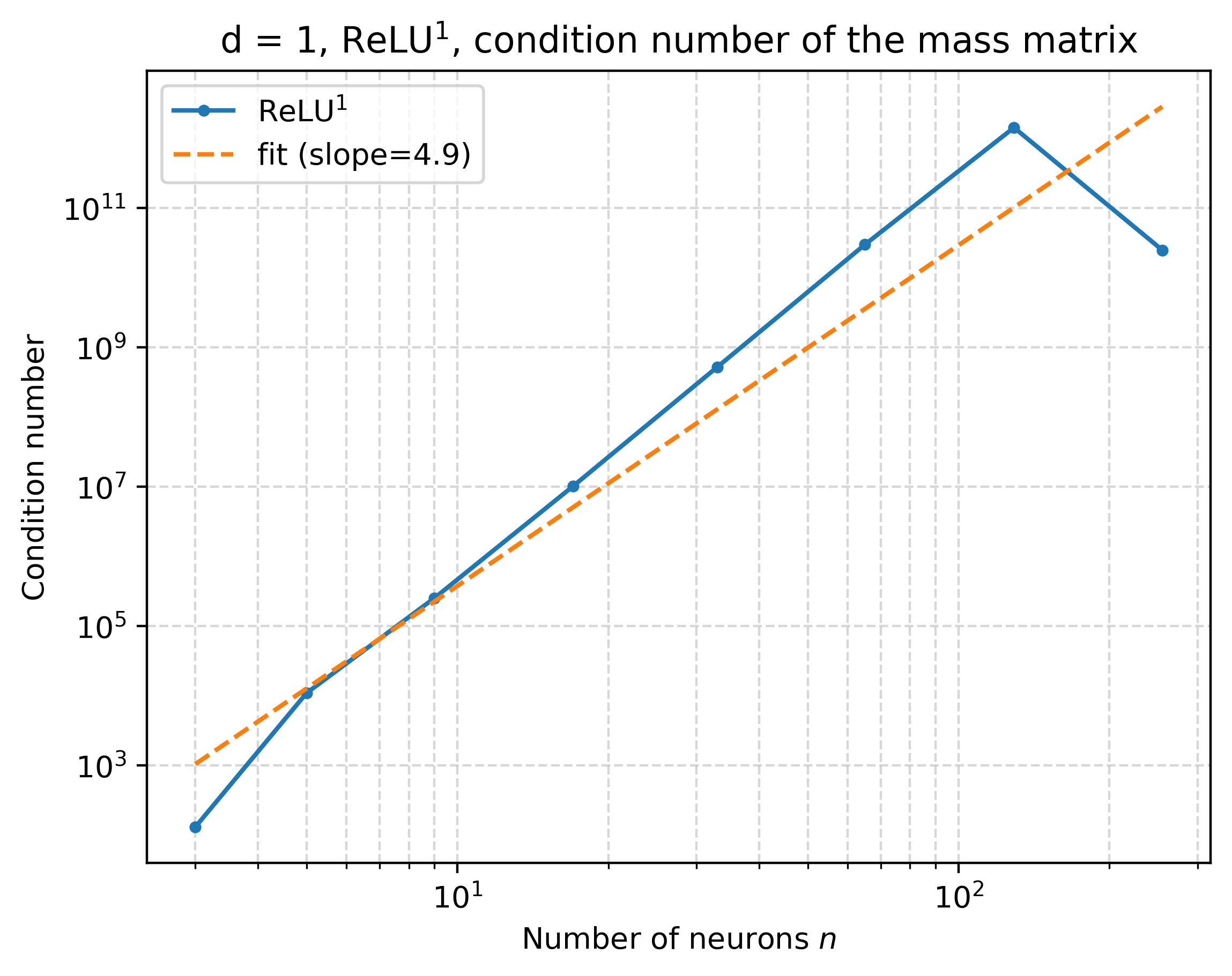}
    \includegraphics[width=0.32\linewidth]{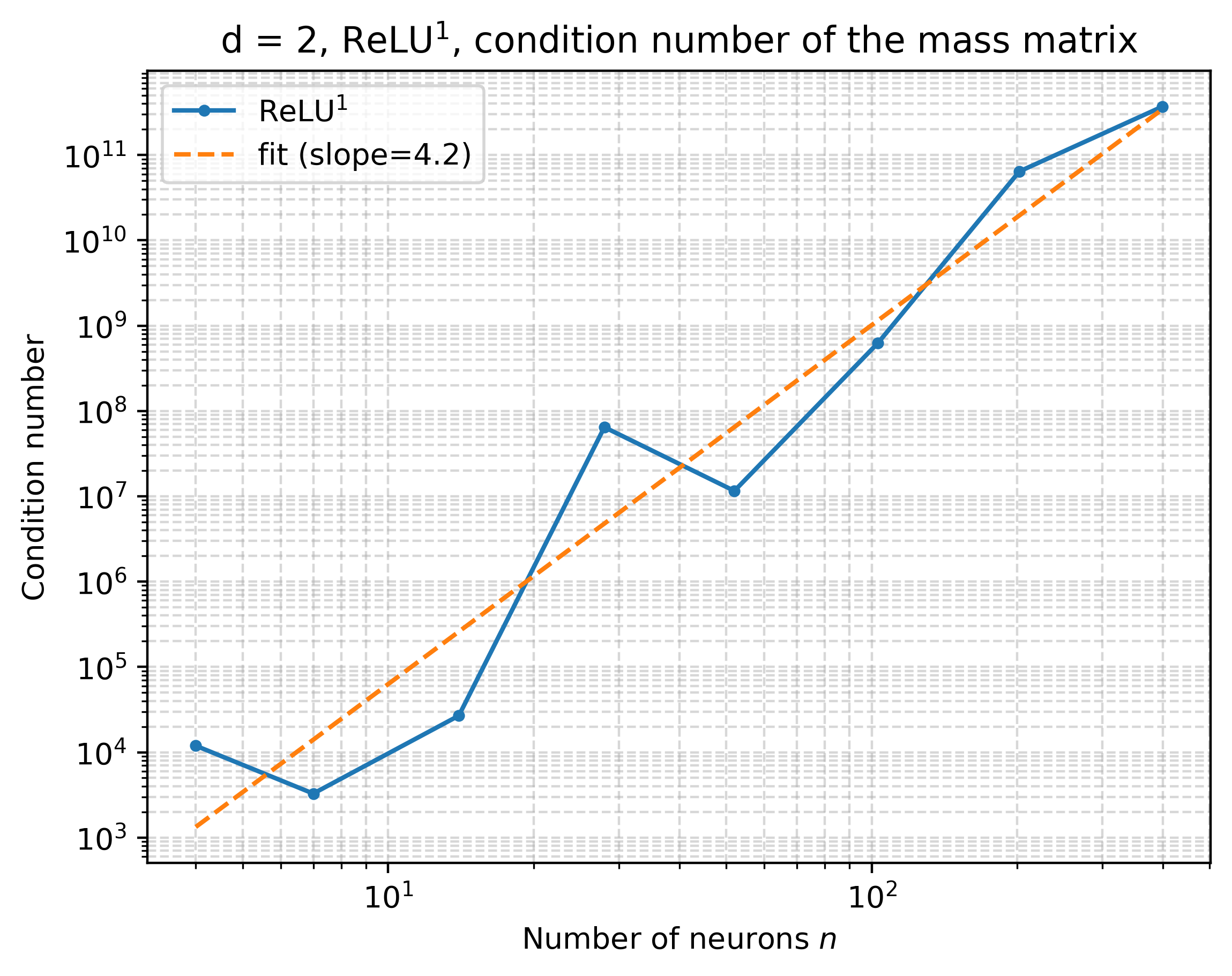}
    \includegraphics[width=0.32\linewidth]{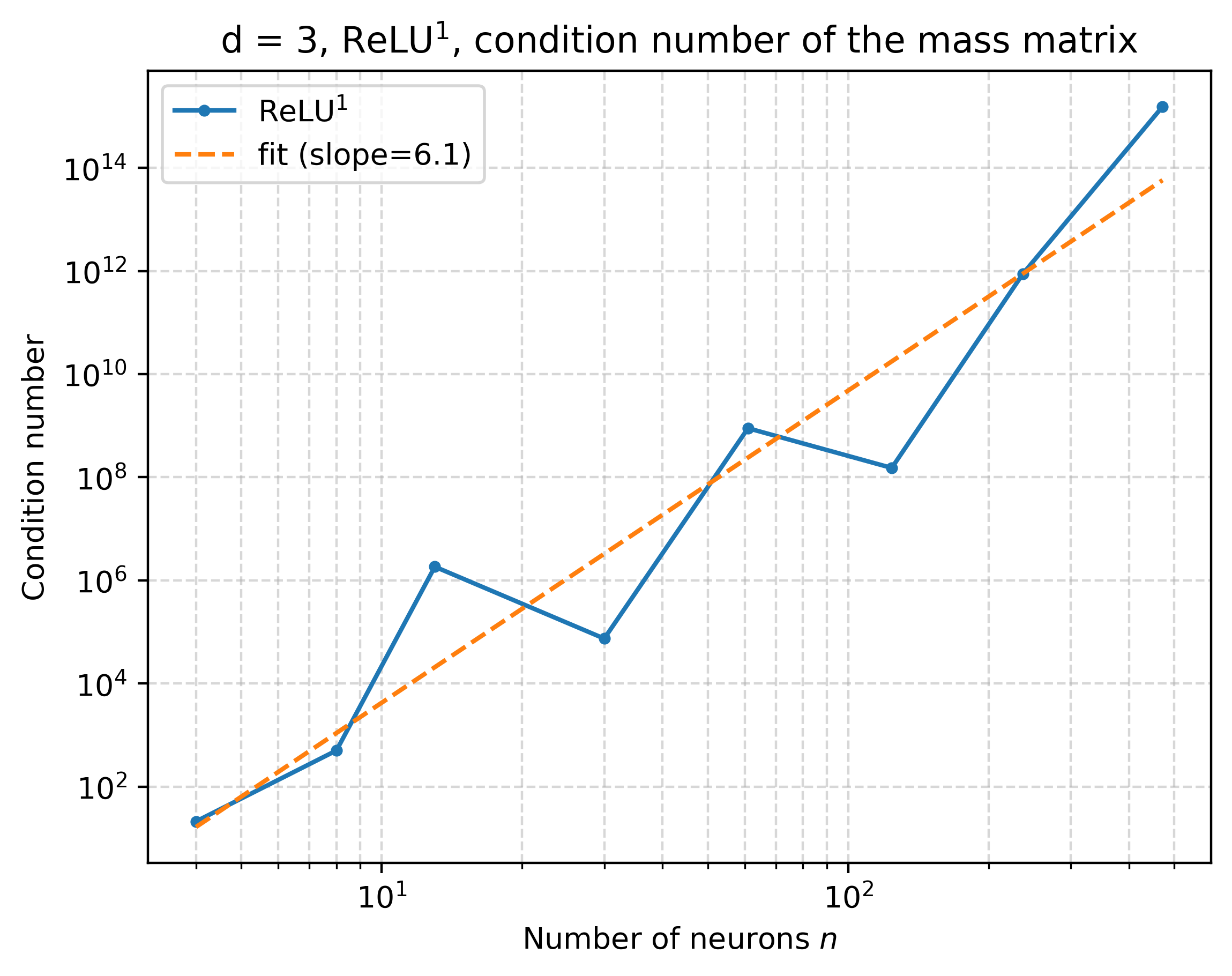} \\ 
    \includegraphics[width=0.32\linewidth]{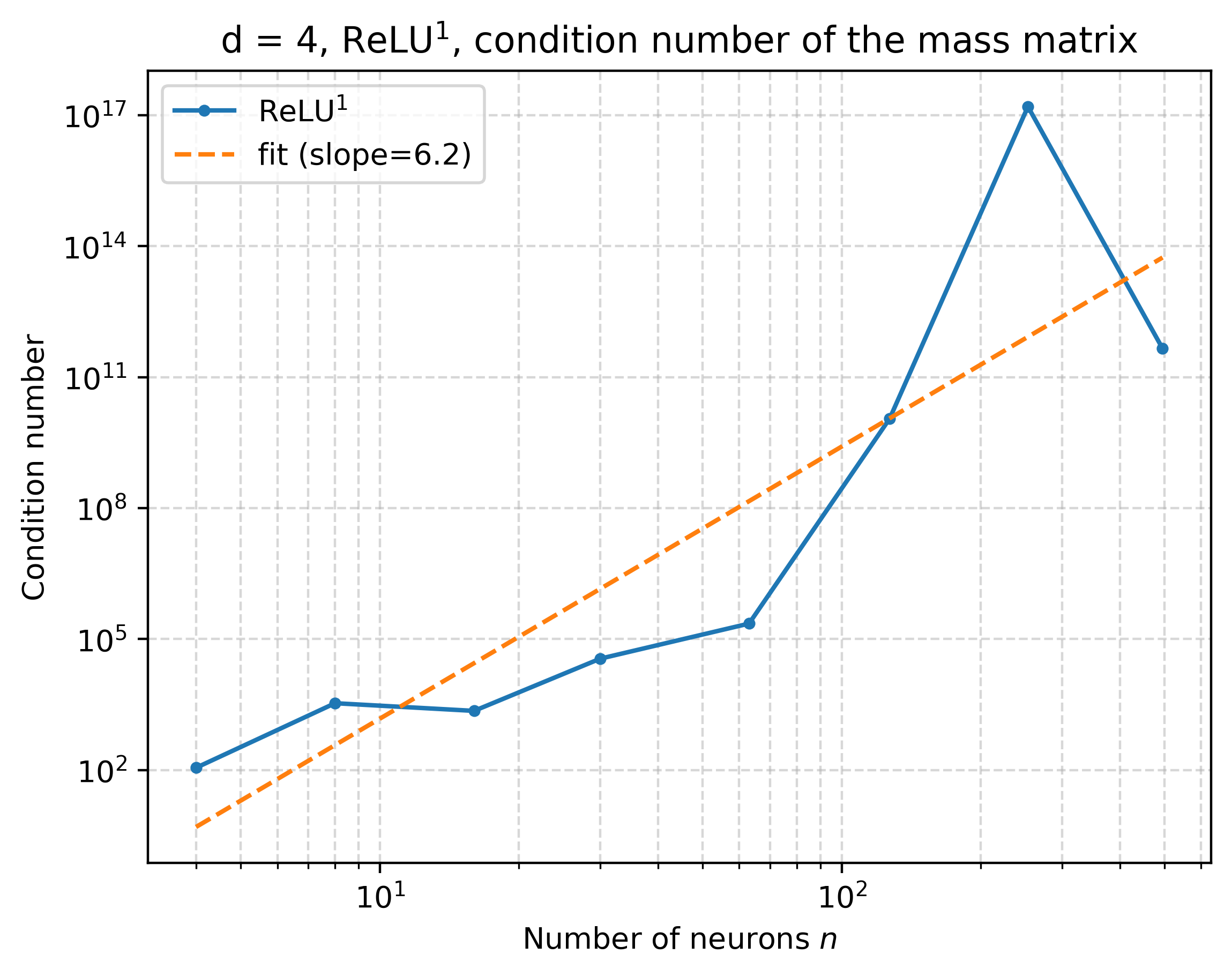}
    \includegraphics[width=0.32\linewidth]{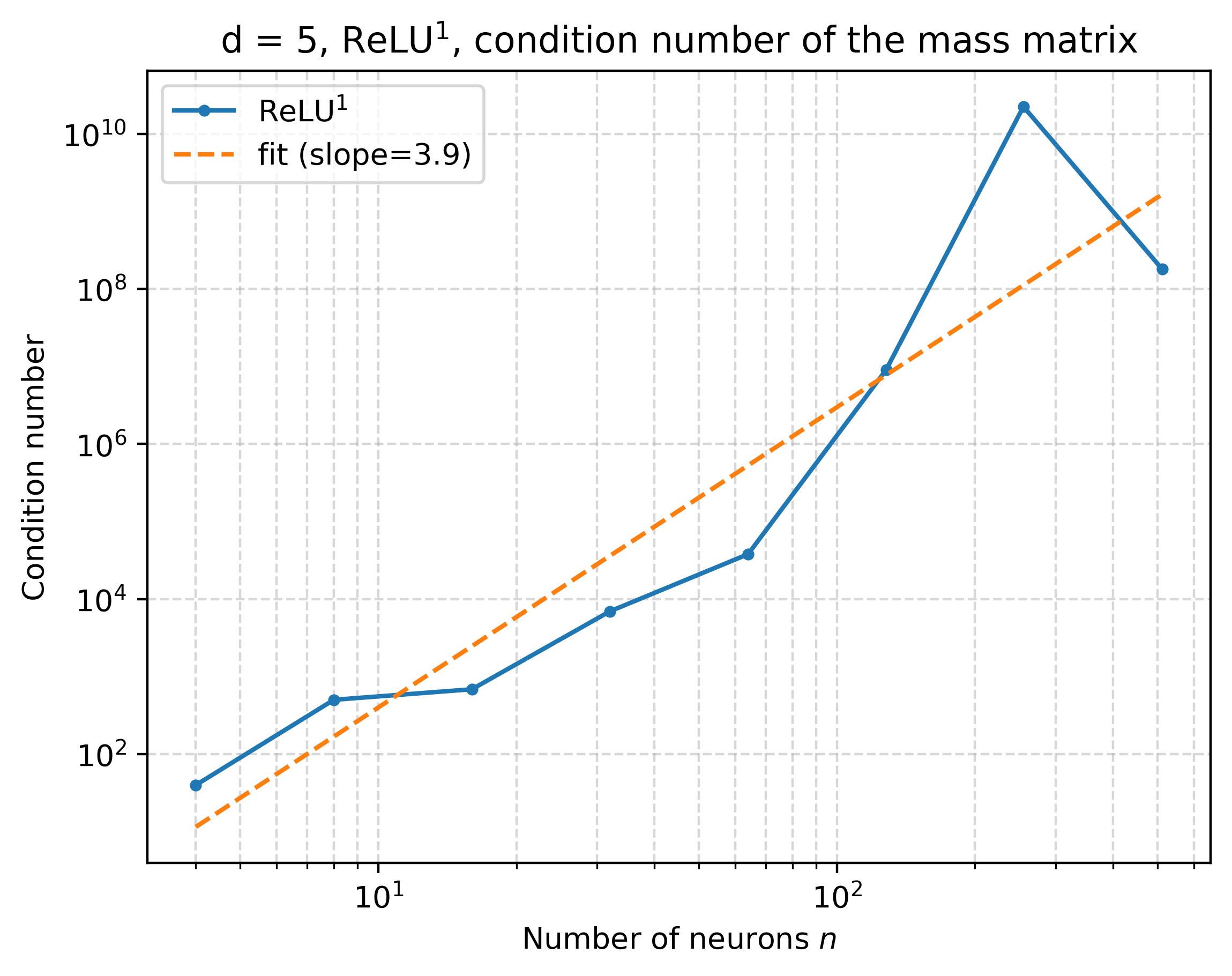}
    \includegraphics[width=0.32\linewidth]{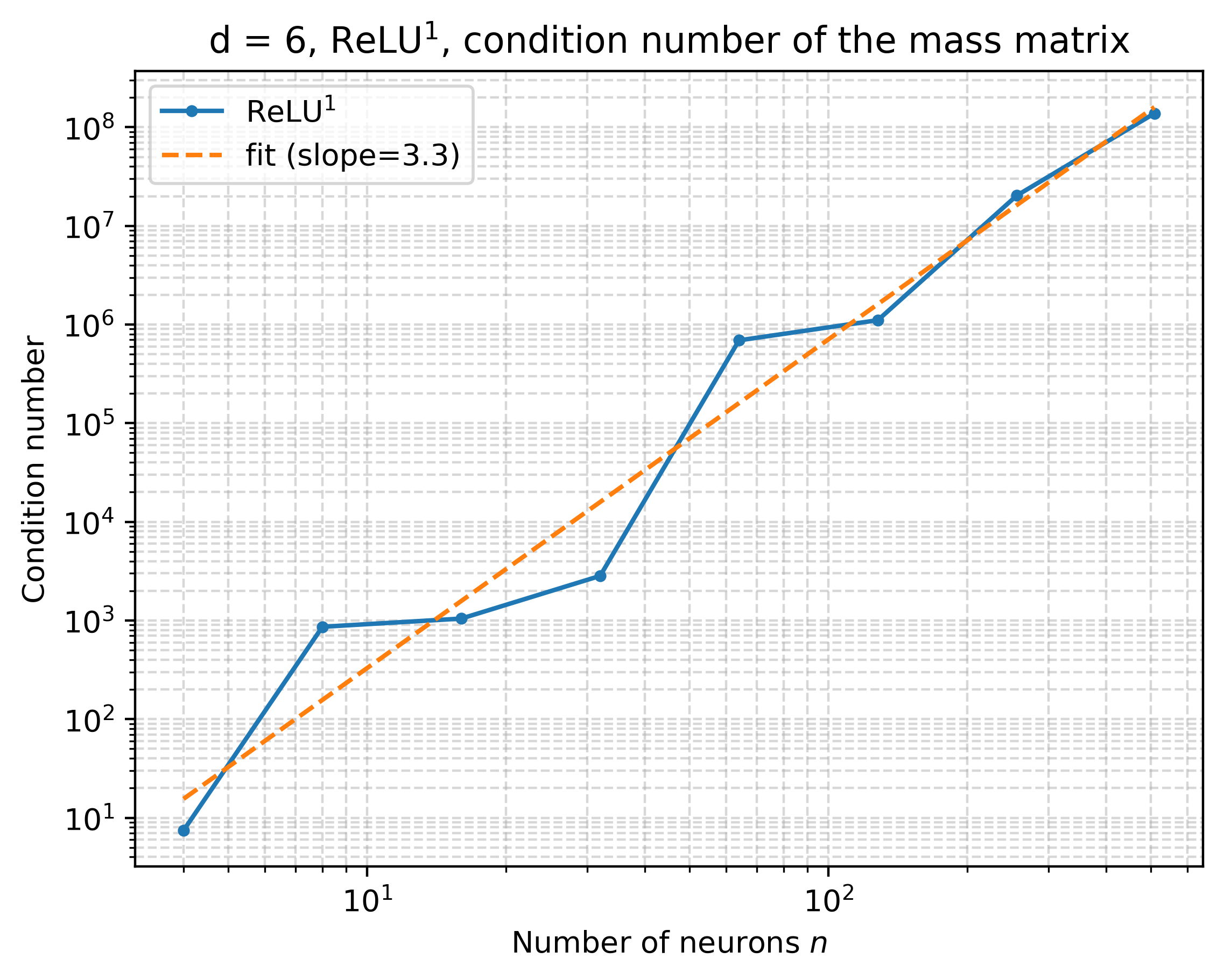} 
    \caption{Condition numbers of the mass matrices for ReLU$^1$ in dimensions $d=1$--$6$. 
    The dashed lines show least-squares fits on a log--log scale.}
    \label{fig:relu-condition}
\end{figure}

\begin{figure}[H]
    \centering
    \includegraphics[width=0.32\linewidth]{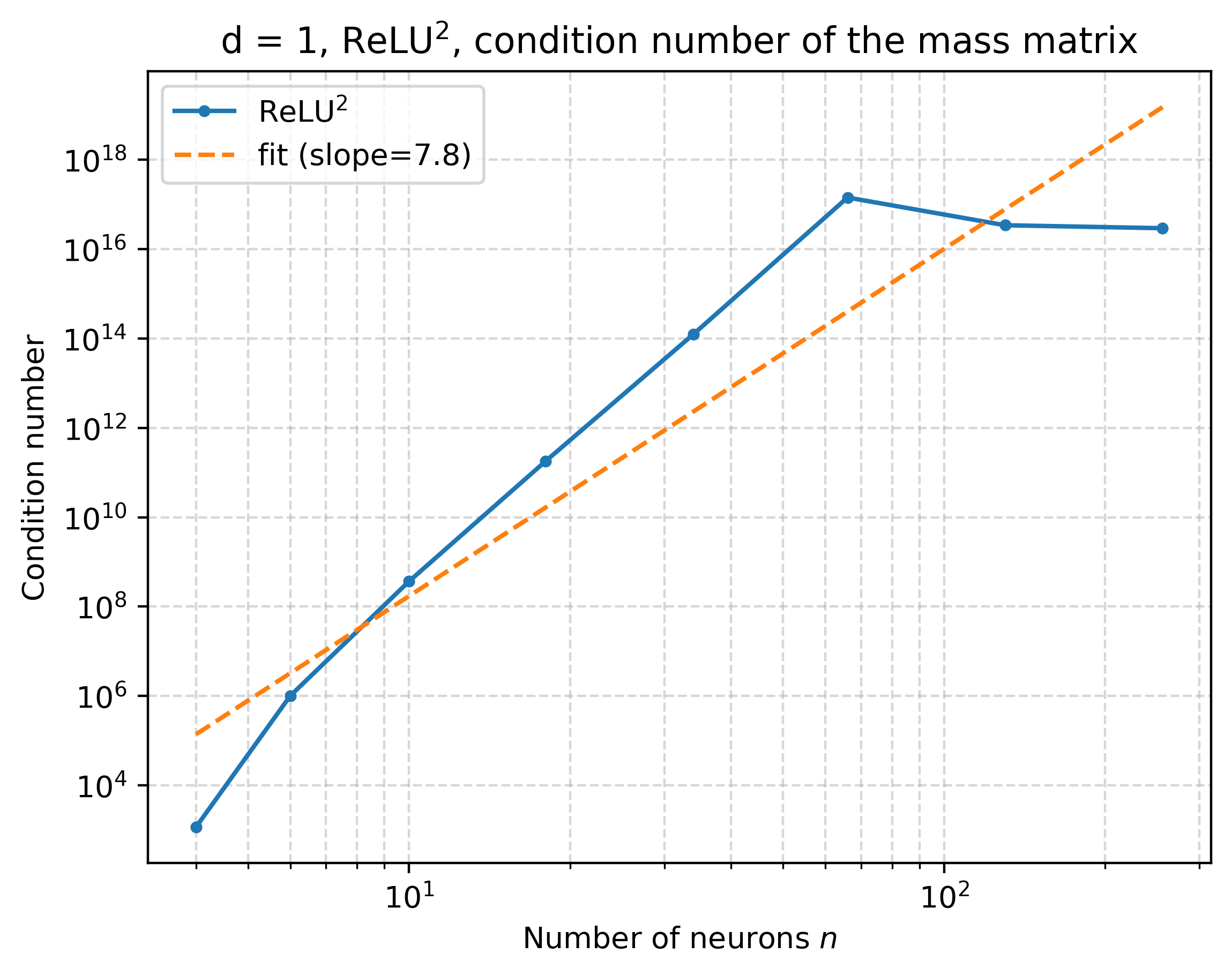}
    \includegraphics[width=0.32\linewidth]{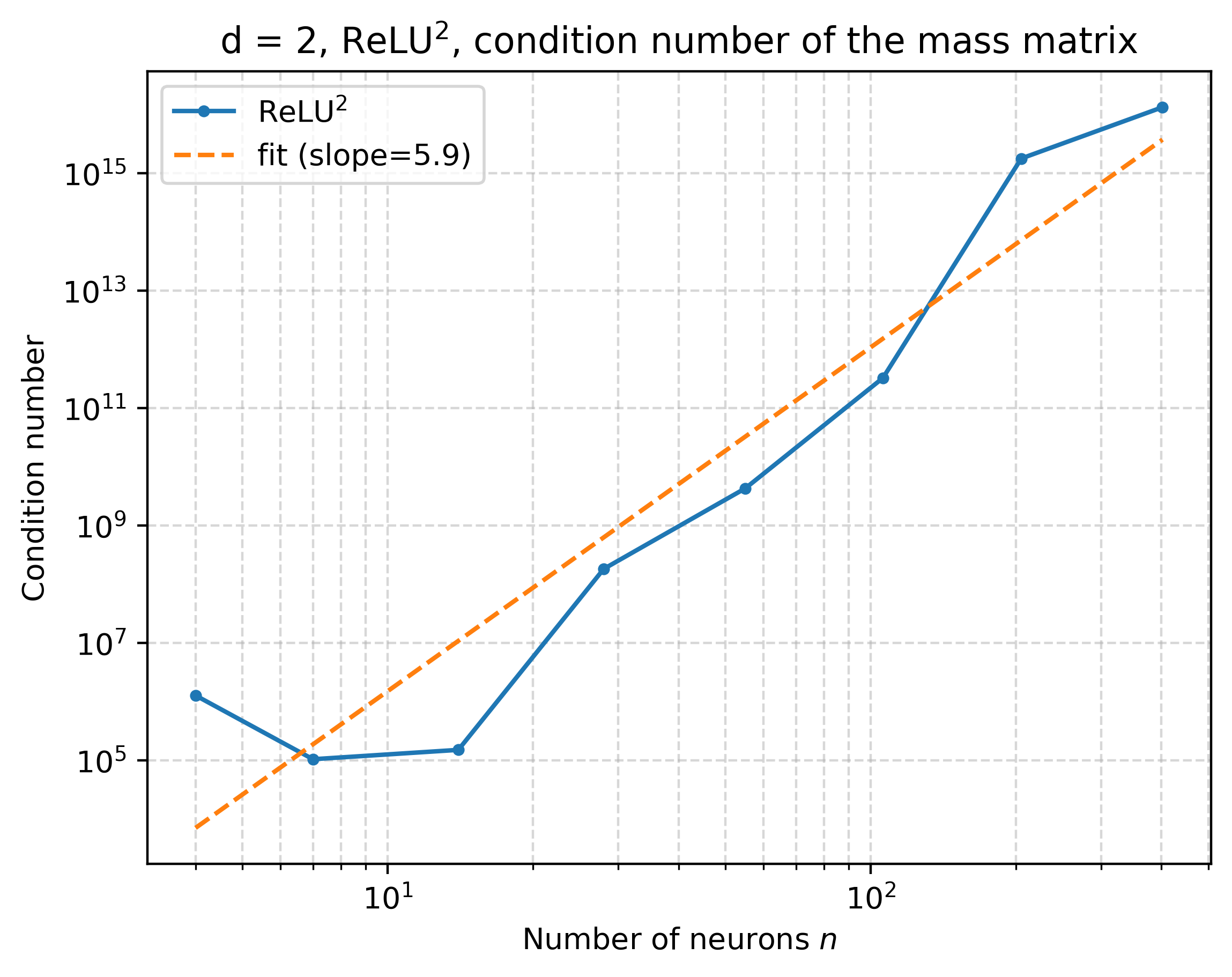}
    \includegraphics[width=0.32\linewidth]{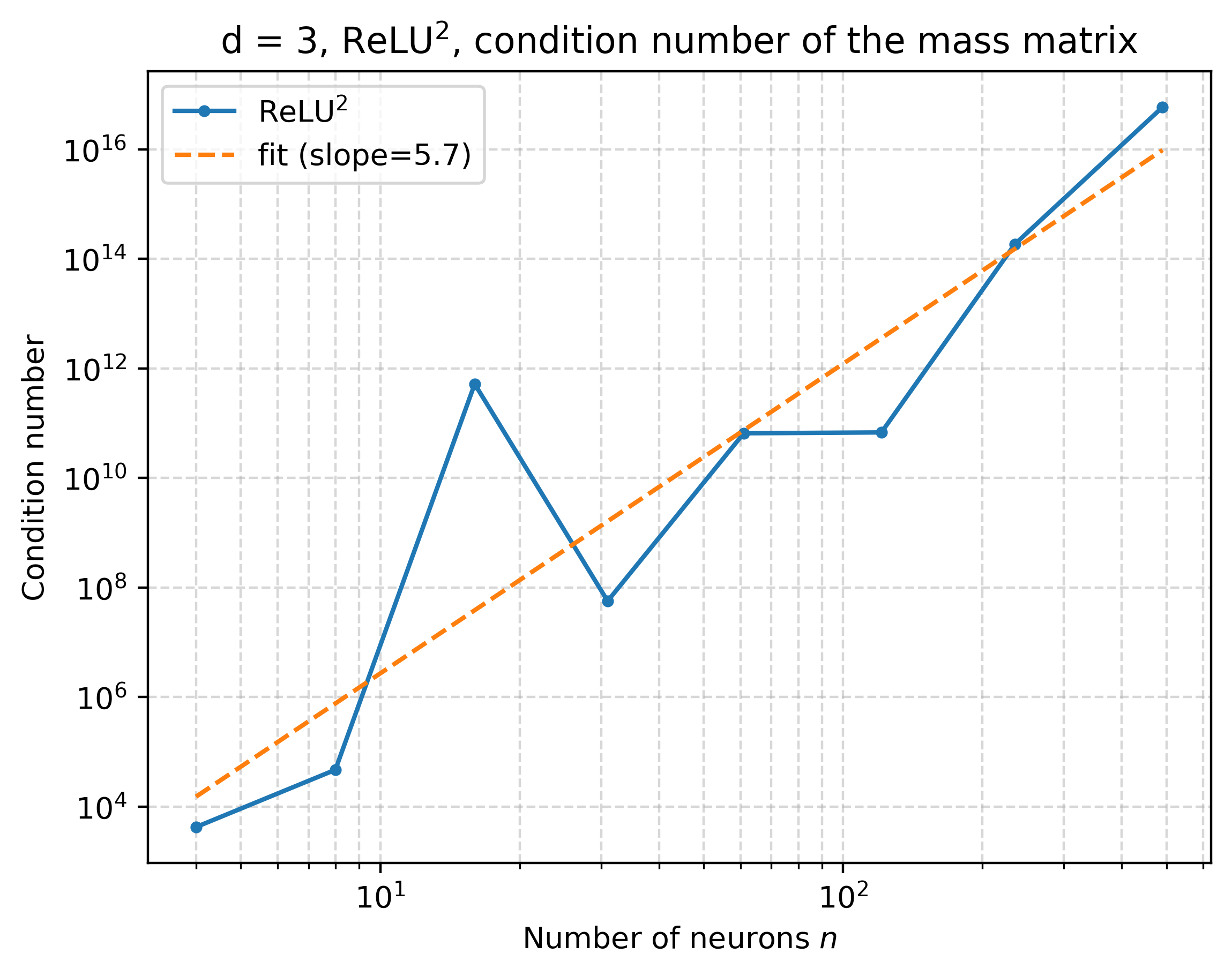} \\ 
    \includegraphics[width=0.32\linewidth]{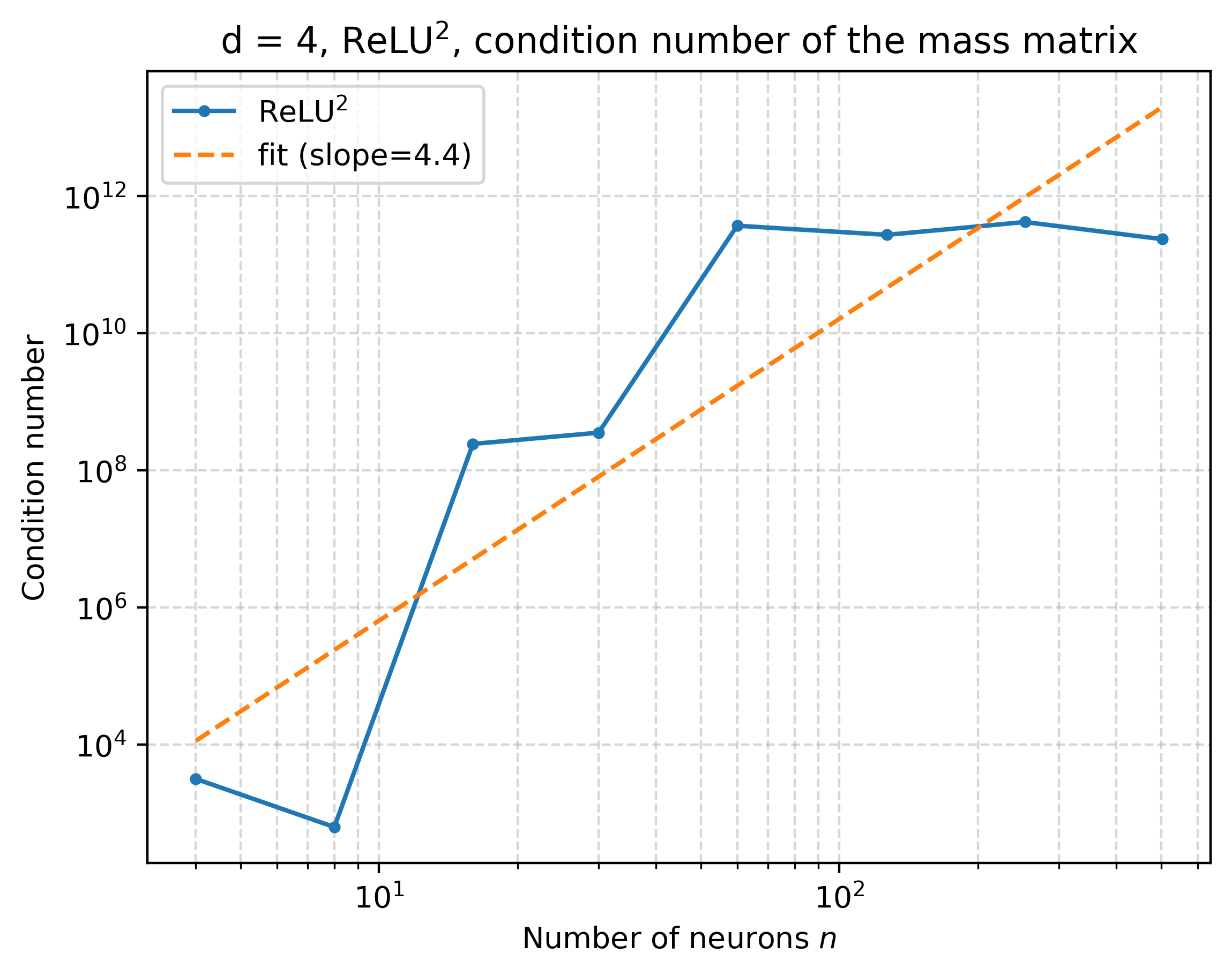}
    \includegraphics[width=0.32\linewidth]{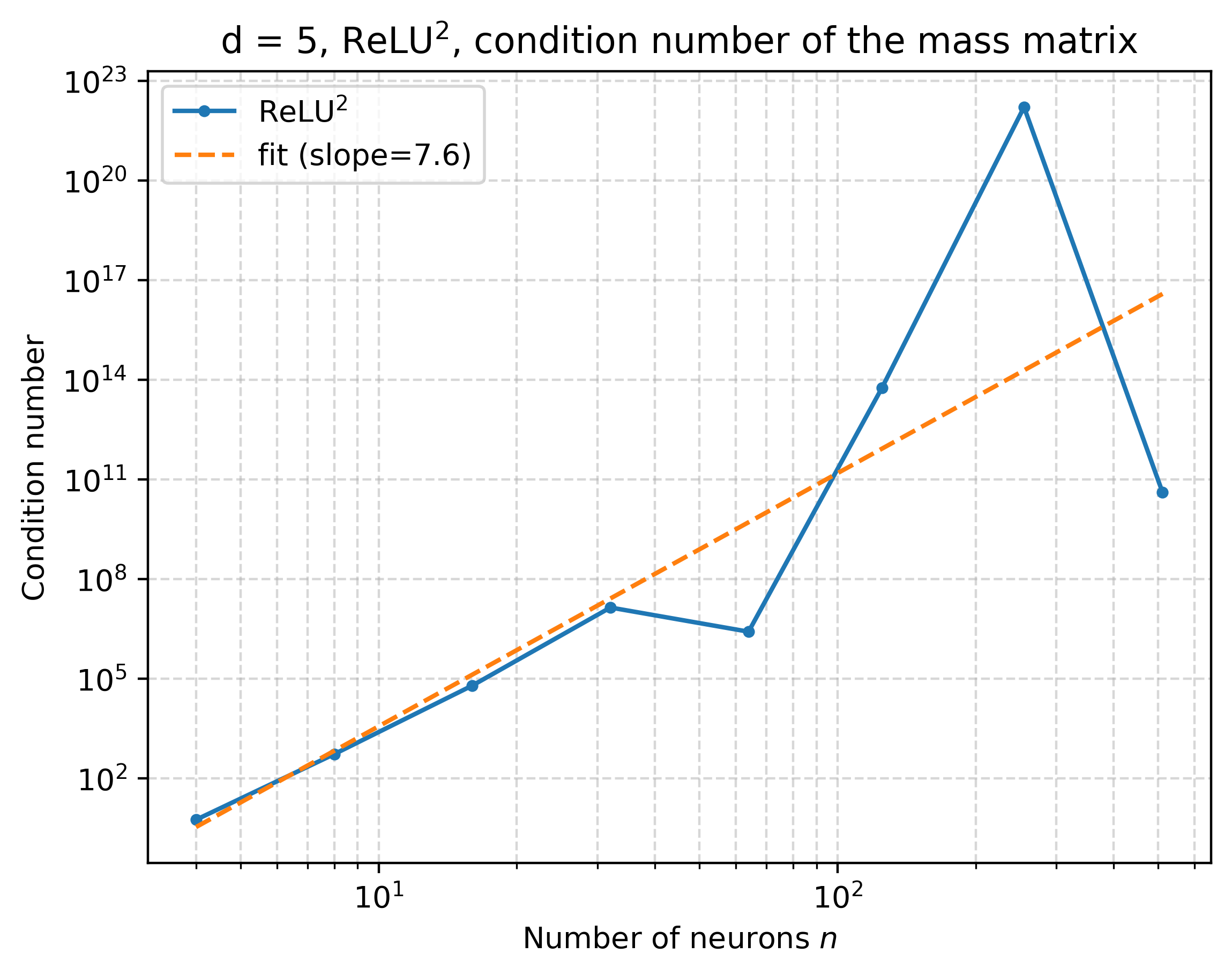}
    \includegraphics[width=0.32\linewidth]{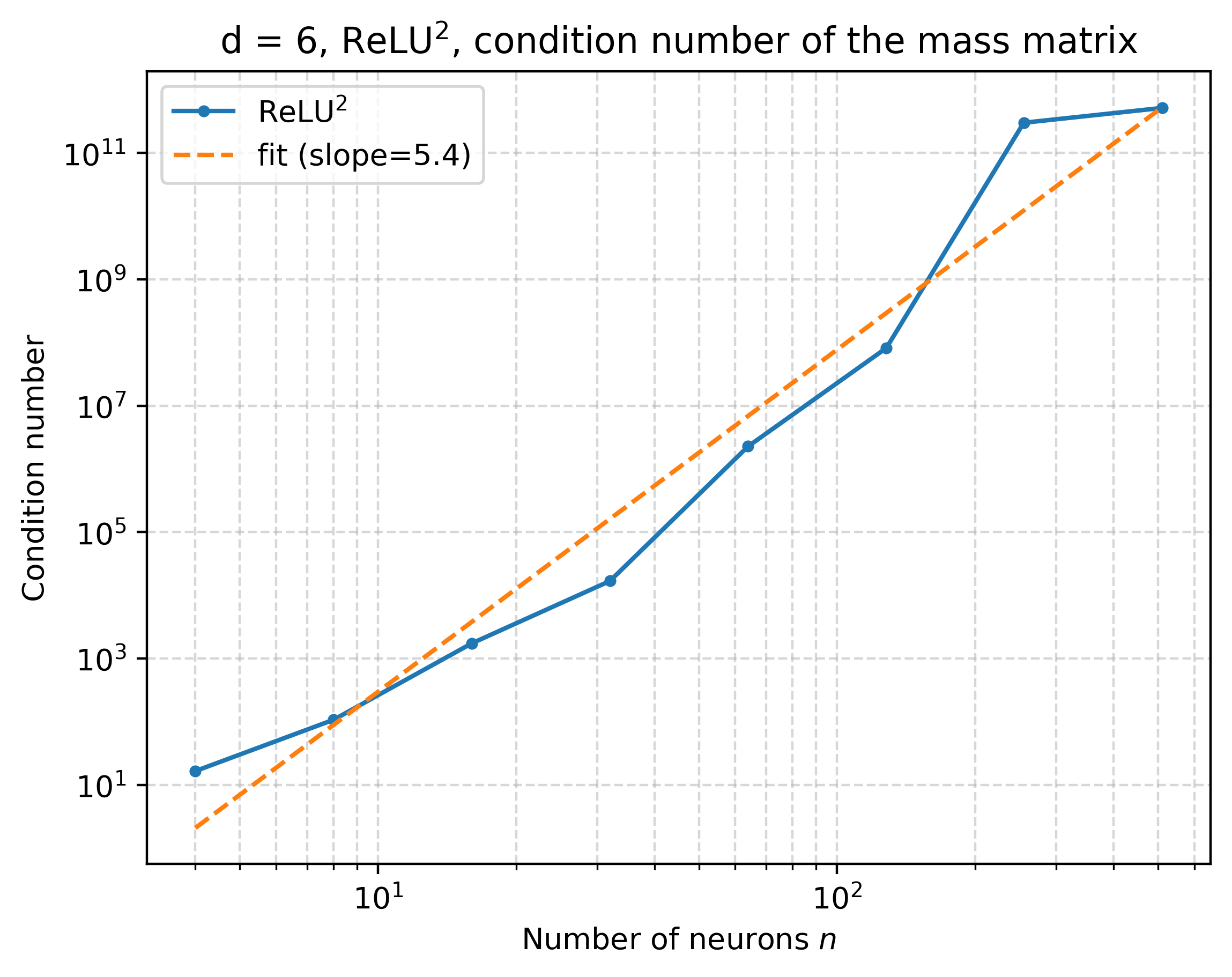} 
    \caption{Condition numbers of the mass matrices for ReLU$^2$ in dimensions $d=1$--$6$. 
    The dashed lines show least-squares fits on a log--log scale.}
    \label{fig:relu2-condition}
\end{figure}

In summary, we observe that the mass matrices for ReLU and ReLU$^2$ neural networks are ill-conditioned. 
This could lead to the instability in the continuous variational formulation, as the number of neurons increases. 
Indeed, in the continuous $L^2$-minimization setting, we observe that the numerical accuracy 
deteriorates once the number of neurons becomes large even if we use sufficiently many quadrature points to ensure accuracy of the numerical integration.  
Figure~\ref{fig:L2-minimization-instability} shows a representative one-dimensional example 
with ReLU$^2$ activation, where the error initially decreases at the expected rate but 
then exhibits irregular behavior as $n$ increases. 
This loss of convergence indicates the presence of numerical instability.
In the variational formulation, the error decay becomes unstable once the number of neurons $n$ exceeds a moderate threshold, due to severe ill-conditioning of the mass matrix. 

\begin{figure}[H]
    \centering
        \includegraphics[width=0.60\linewidth]{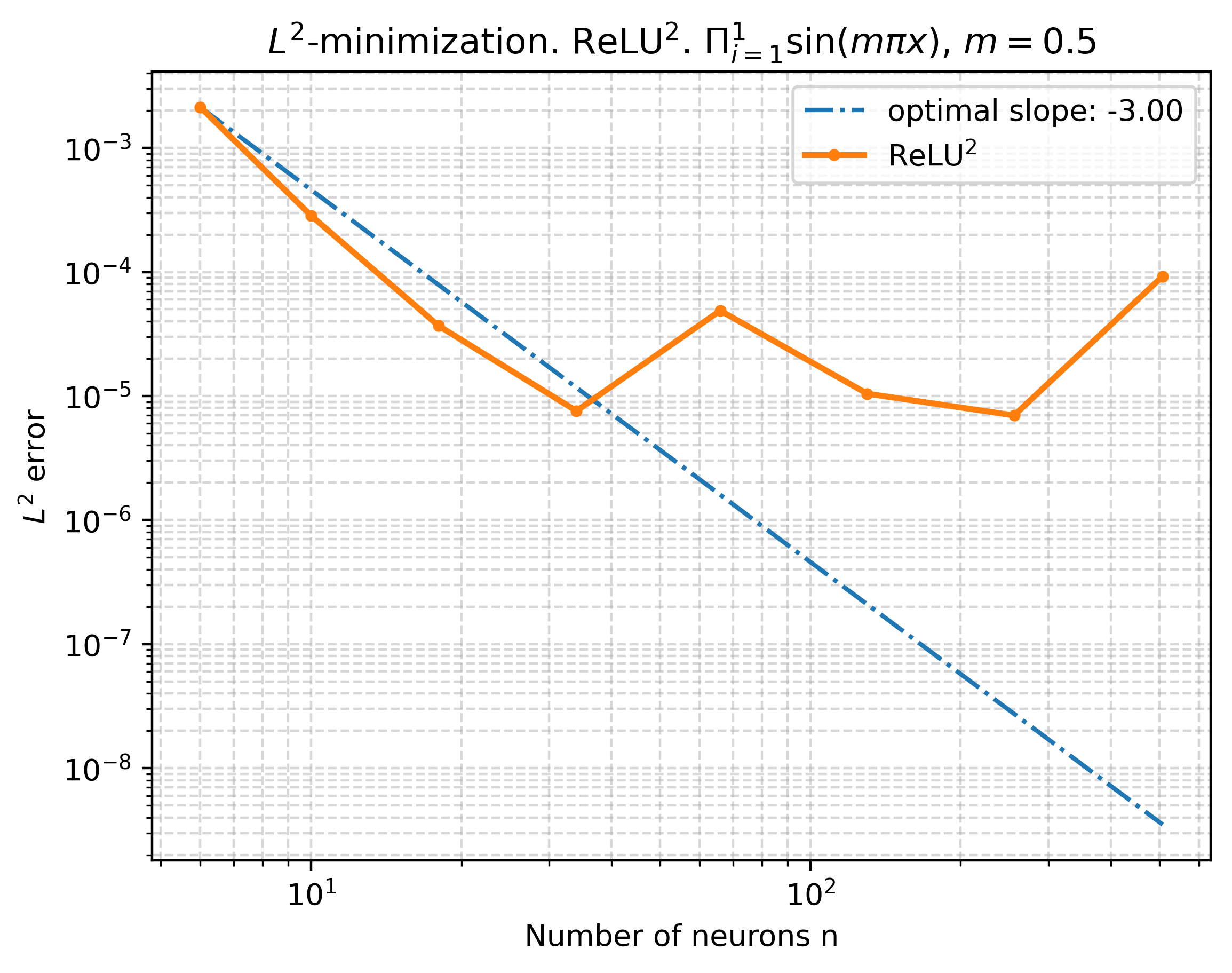}
    \caption{Error decay plots for continuous $L^2$-minimization with ReLU$^2$ in one dimension. 
    The error decay becomes unstable as the number of neurons increases.}
    \label{fig:L2-minimization-instability}
\end{figure}

To alleviate this issue, it is therefore important to design effective preconditioning strategies for such linear systems. 
Developing such preconditioners for linearized neural network bases is nontrivial, and rigorous results are currently limited. Existing optimal preconditioners are known only in the one-dimensional ReLU setting, see, for example, \cite{PXX:2022}.
For the $L^2$-minimization problem, one natural alternative is to solve the variational problem in a least-squares form rather than assembling and solving the normal equations directly.
The details are described in Appendix \ref{app:least-square}.
We next demonstrate, through a simple one-dimensional example, that this variational least-squares formulation leads to slightly improved numerical stability.

Specifically, we compare two approaches for solving the $L^2$-minimization problem using a one-dimensional ReLU$^2$ shallow neural network. The first approach is the direct assembly and
solution of the mass matrix arising from the variational formulation, while the second approach
solves the same variational problem using a weighted least-squares formulation. Both approaches
employ identical quadrature rules and the same set of sampling points. Therefore, any discrepancy
in numerical performance can be attributed solely to the conditioning of the resulting linear
systems rather than to differences in discretization.

The numerical results are shown in Figure~\ref{fig:L2min-compare-1drelu2}. 
We observe that the variational least-squares formulation yields slightly more stable numerical accuracy as the number of neurons increases, compared to direct mass matrix assembly.

\begin{figure}[h]
    \centering
    \includegraphics[width=0.6\linewidth]{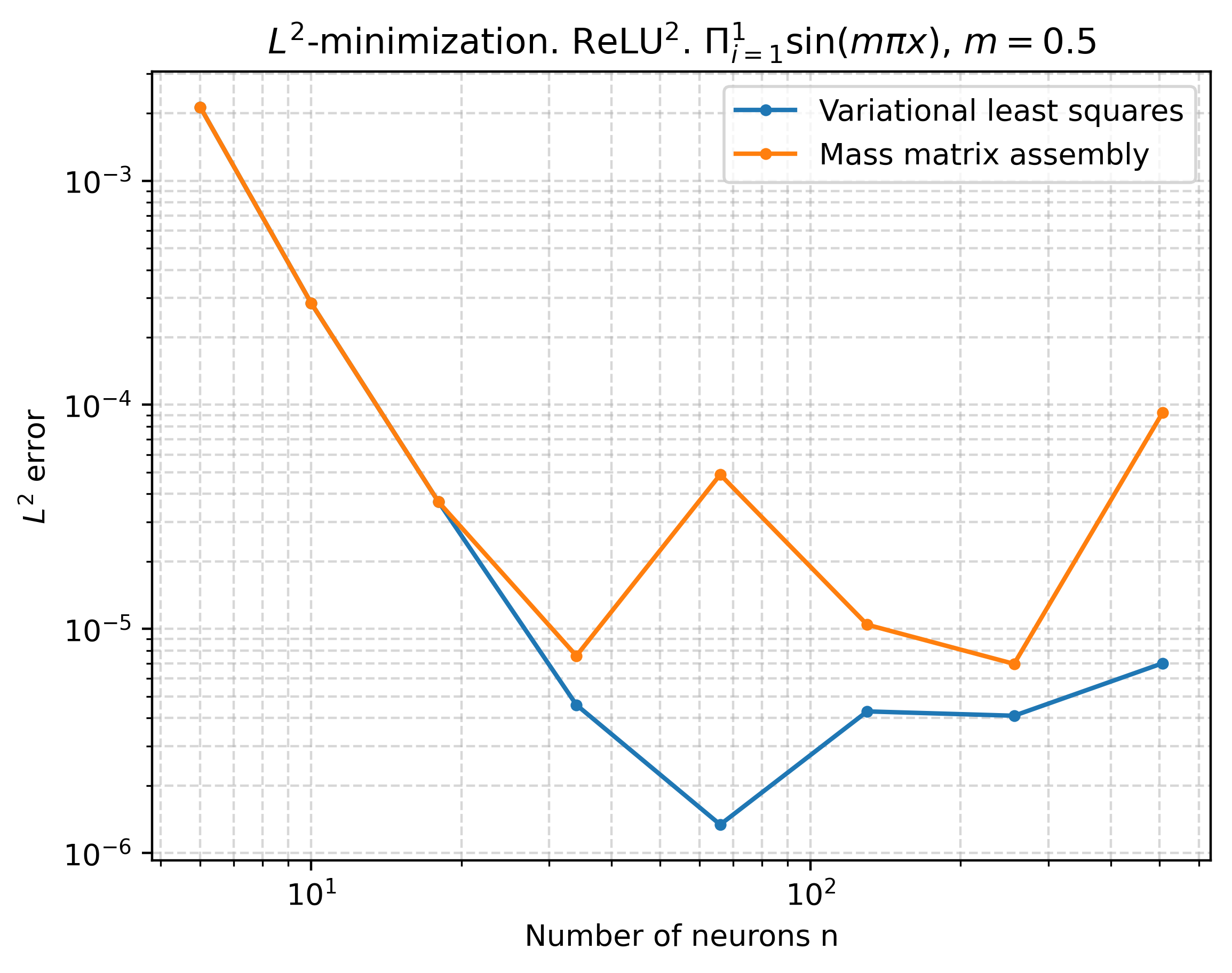}
    \caption{Comparison of mass matrix assembly and variational least–squares formulations for 1D ReLU$^2$ networks. }
    \label{fig:L2min-compare-1drelu2}
\end{figure}


\subsubsection{Second-order elliptic PDEs}
We next consider the Neumann problem
\begin{equation}
\label{eq:Neumann}
\begin{aligned}
        -\Delta u +  u & = f \quad \text{ in } \Omega = (-1,1)^d,\\ 
        \frac{\partial u}{\partial n} & = 0 \quad \text{ on } \partial \Omega. 
\end{aligned}    
\end{equation} 
For convenience, we impose the zero Neumann boundary condition and choose the exact solution $ u(x) = \prod_{i=1}^d \sin\!\left(\tfrac{\pi x_i}{2}\right) $.
This function satisfies the Neumann boundary condition, with
$\|u\|_{L^2(\Omega)} = 1$, $\|\nabla u\|_{L^2(\Omega)} = \tfrac{\pi}{2}\sqrt{d}$
for all $d$. 

We solve the equation using ReLU$^k$ shallow neural networks 
with predetermined neurons under the variational formulation, that is, we find a numerical solution $u_n \in L_n^k$ such that it satisfies the variational problem of the form \eqref{eq:variational-neumann}. 
The way to construct a quasi-uniform grid points on a sphere $S^d$ for this problem is same as that for the $L^2$-minimization problem. 

For numerical integration, in 2D, we divide the square domain into $100^2$ uniform subdomains and use a Gauss quadrature of order $3$ in each subdomain.
In 3D, we divide the cubic domain into $50^3$ uniform subdomains and use a Gauss quadrature of order $3$ in each subdomain.
In higher dimensions, we use the Quasi-Monte Carlo method with integration points generated by Sobol sequences.
The numbers of integration points are $1\times10^6$, $6\times10^6$, and $5\times10^7$ in 4D, 5D, and 6D, respectively.

The results are shown in Tables \ref{tab:Neuman-RFM-ex1-2d}, \ref{tab:Neuman-RFM-ex1-3d}, \ref{tab:Neuman-RFM-ex1-4d}, \ref{tab:Neuman-RFM-ex1-5d} and \ref{tab:Neuman-RFM-ex1-6d}.

    \begin{table}[H]
        \centering
        \begin{tabular}{c|c|c|c|c}
$n$ & 	 $\|u-u_n \|_{L^2}$ & 	 order $O(n^{-2.25})$  & $ |u-u_n |_{H^1}$ & 	 order $O(n^{-1.75})$\\ \hline \hline 
88 		 &  5.542e-04 &  		*   &  8.075e-03 &  *   \\ \hline  
165 		 &  8.224e-05 &  		 3.04  &  1.905e-03 &  2.30  \\ \hline  
318 		 &  2.070e-05 &  		 2.10  &  6.226e-04 &  1.70  \\ \hline  
631 		 &  4.214e-06 &  		 2.32  &  1.839e-04 &  1.78  \\ \hline  
1256 		 &  8.508e-07 &  		 2.32  &  5.139e-05 &  1.85  \\ \hline  
        \end{tabular}
        \caption{2D Neumann problem solved by the variational method. ReLU$^3$ shallow neural network with predetermined neurons.}
        \label{tab:Neuman-RFM-ex1-2d}
    \end{table}

      \begin{table}[H]
        \centering
        \begin{tabular}{c|c|c|c|c}
$n$ & 	 $\|u-u_n \|_{L^2}$ & 	 order $O(n^{-1.67})$ & $ |u-u_n |_{H^1}$ & 	 order $O(n^{-1.33})$\\ \hline \hline 
95 		 &  3.859e-02 &  		 *  &  2.930e-01 &   * \\ \hline  
192 		 &  8.457e-03 &  		 2.16  &  8.397e-02 &  1.78  \\ \hline  

383 		 &  2.164e-03 &  		 1.97  &  2.769e-02 &  1.61  \\ \hline  

751 		 &  6.656e-04 &  		 1.75  &  1.085e-02 &  1.39  \\ \hline  

1481 		 &  1.876e-04 &  		 1.86  &  3.961e-03 &  1.48  \\ \hline  
        \end{tabular}
        \caption{3D Neumann problem solved by the variational method. ReLU$^3$ shallow neural network with predetermined neurons. }
        \label{tab:Neuman-RFM-ex1-3d}
    \end{table}
    
      \begin{table}[H]
        \centering
        \begin{tabular}{c|c|c|c|c}
$n$ & 	 $\|u-u_n \|_{L^2}$ & 	 order $O(n^{-1.375})$ & $ |u-u_n |_{H^1}$ & 	 order $O(n^{-1.125})$\\ \hline \hline 
95 		 &  3.122e-01 &  		*  &  1.500e+00 &  *\\ \hline  
199 		 &  9.747e-02 &  		 1.57  &  6.570e-01 &  1.12  \\ \hline  
393 		 &  3.315e-02 &  		 1.58  &  2.883e-01 &  1.21  \\ \hline  
791 		 &  1.109e-02 &  		 1.57  &  1.190e-01 &  1.27  \\ \hline  
1575 		 &  4.142e-03 &  		 1.43  &  5.322e-02 &  1.17  \\ \hline 
        \end{tabular}
        \caption{4D Neumann problem solved by the variational method. ReLU$^3$ shallow neural network with predetermined neurons.}
        \label{tab:Neuman-RFM-ex1-4d}
    \end{table}

      \begin{table}[H]
        \centering
        \begin{tabular}{c|c|c|c|c}
$n$ & 	 $\|u-u_n \|_{L^2}$ & 	 order $O(n^{-1.2})$ & $ |u-u_n |_{H^1}$ & 	 order $O(n^{-1.0})$\\ \hline \hline 
99 		 &  9.353e-01 &  		*  &  3.356e+00 &  *   \\ \hline  
199 		 &  6.036e-01 &  		 0.63  &  2.533e+00 &  0.40  \\ \hline  
399 		 &  2.408e-01 &  		 1.32  &  1.367e+00 &  0.89  \\ \hline  
798 		 &  7.664e-02 &  		 1.65  &  5.802e-01 &  1.24  \\ \hline  
1590 		 &  3.730e-02 &  		 1.04  &  3.400e-01 &  0.78  \\ \hline  
3182 		 &  1.582e-02 &  		 1.24  &  1.751e-01 &  0.96  \\ \hline 
        \end{tabular}
        \caption{5D Neumann problem solved by the variational method. ReLU$^3$ shallow neural network with predetermined neurons. }
        \label{tab:Neuman-RFM-ex1-5d}
    \end{table}
    
      \begin{table}[H]
        \centering
        \begin{tabular}{c|c|c|c|c}
$n$ & 	 $\|u-u_n \|_{L^2}$ & 	 order $O(n^{-1.08})$  & $ |u-u_n |_{H^1}$ & 	 order $O(n^{-0.92})$\\ \hline \hline 

100 		 &  9.983e-01 &  		 *  &  3.842e+00 &  *  \\ \hline  
200 		 &  9.878e-01 &  		 0.02  &  3.811e+00 &  0.01  \\ \hline  

399 		 &  8.384e-01 &  		 0.24  &  3.422e+00 &  0.16  \\ \hline  

798 		 &  4.036e-01 &  		 1.05  &  2.108e+00 &  0.70  \\ \hline  

1598 		 &  1.688e-01 &  		 1.26  &  1.146e+00 &  0.88  \\ \hline  

3194 		 &  7.788e-02 &  		 1.12  &  6.421e-01 &  0.84  \\ \hline  
        \end{tabular}
        \caption{6D Neumann problem solved by the variational method. ReLU$^3$ shallow neural network with predetermined neurons. }
        \label{tab:Neuman-RFM-ex1-6d}
    \end{table}

\subsubsection{Deterministic features using quasi-Monte Carlo points in high dimensions}
For higher dimensions ($d \geq 3$), constructing a quasi--uniform grid on $S^d$ 
is nontrivial. 
In the previous subsections, for convenience, we fix the hidden layer parameters by sampling from a uniform distribution on the sphere $S^d$. 
While simple to implement, purely random sampling may produce clusters or gaps, 
and hence does not guarantee quasi--uniform coverage of the sphere.  
To alleviate this difficulty, we propose to use quasi--Monte Carlo (QMC) point sets on the sphere to determine the hidden layer parameters. 
The use of QMC sequences ensures a more uniform distribution than purely random sampling.  
The procedure to generate QMC points uniformly distributed on $S^d$ is described below.



\begin{enumerate}
    \item \textbf{Generate Low-Discrepancy Points:} \\
    Generate points in the unit cube \([0,1]^{d+1}\) using a QMC sequence, e.g., Sobol or Halton sequences.

    \item \textbf{Clip the Points:} \\
    Clip each coordinate \(u\) to the interval \([\varepsilon, 1-\varepsilon]\) to avoid boundary values that yield infinities, where \(\varepsilon\) is a small positive constant.

    \item \textbf{Map to the Standard Normal Distribution:} \\
    Transform each coordinate using the inverse cumulative distribution function (CDF) of the standard normal distribution:
    \[
    z = \Phi^{-1}(u),
    \]
    where \(\Phi^{-1}\) is the quantile function of the standard Gaussian.

    \item \textbf{Project onto the Unit Sphere:} \\
    Normalize the resulting vector \(\mathbf{z} \in \mathbb{R}^{d+1}\) by its Euclidean norm:
    \[
    \mathbf{x} = \frac{\mathbf{z}}{\|\mathbf{z}\|_2}.
    \]
    
    \item \textbf{Final Result:} \\
    The points \(\mathbf{x}\) are approximately uniformly distributed on the sphere \(S^d\) and inherit the low-discrepancy properties from the original QMC sequence.
\end{enumerate}

Using this approach, we solve the Neumann problem in dimensions three and five. 
The corresponding numerical results are reported in 
Tables~\ref{tab:Neuman-RFM-ex1-3d-qmc} and~\ref{tab:Neuman-RFM-ex1-6d-qmc}. 
We use the QMC points generated by the Sobol sequence. 
For both cases, we also compare against the baseline where hidden layer parameters 
are chosen by independent random sampling. 
The comparisons are shown in Figures~\ref{fig:3dNeumann-RFM-qmc} and~\ref{fig:5dNeumann-RFM-qmc}. 
The results indicate that employing QMC points for the hidden parameters improves the numerical accuracy relative to purely random sampling.

\begin{table}[H]
        \centering
        \begin{tabular}{c|c|c|c|c}
$n$ & 	 $\|u-u_n \|_{L^2}$ & 	 order $O(n^{-1.67})$ & $ |u-u_n |_{H^1}$ & 	 order $O(n^{-1.33})$\\ \hline \hline 
23 		 & 4.217e-01 &		 * & 1.550e+00 &  *  \\ \hline 
47 		 &  1.196e-01 &  		 1.76  &  6.404e-01 &  1.24  \\ \hline  
96 		 &  2.883e-02 &  		 1.99  &  2.212e-01 &  1.49  \\ \hline  
191 		 &  7.638e-03 &  		 1.93  &  7.680e-02 &  1.54  \\ \hline  
385 		 &  1.706e-03 &  		 2.14  &  2.228e-02 &  1.77  \\ \hline  
750 		 &  4.588e-04 &  		 1.97  &  7.994e-03 &  1.54  \\ \hline  
1483 		 &  1.526e-04 &  		 1.61  &  3.332e-03 &  1.28  \\ \hline 
        \end{tabular}
        \caption{3D Neumann problem solved by the variational method. ReLU$^3$ shallow neural network with predetermined neurons constructed from QMC points.  Numerical integration: $50^3$, order $3$.   }
        \label{tab:Neuman-RFM-ex1-3d-qmc}
    \end{table}

      \begin{table}[H]
        \centering
        \begin{tabular}{c|c|c|c|c}
$n$ & 	 $\|u-u_n \|_{L^2}$ & 	 order $O(n^{-1.2})$ & $ |u-u_n |_{H^1}$ & 	 order $O(n^{-1.0})$\\ \hline \hline 


98 		 &  9.074e-01 &  	*  &  3.288e+00 &  * \\ \hline  
197 		 &  5.577e-01 &  		 0.70  &  2.414e+00 &  0.44  \\ \hline  
397 		 &  2.025e-01 &  		 1.45  &  1.218e+00 &  0.98  \\ \hline  
794 		 &  7.349e-02 &  		 1.46  &  5.689e-01 &  1.10  \\ \hline  
1591 		 &  3.304e-02 &  		 1.15  &  3.038e-01 &  0.90  \\ \hline  
3185 		 &  1.428e-02 &  		 1.21  &  1.579e-01 &  0.94  \\ \hline
        \end{tabular}
        \caption{5D Neumann problem solved by the variational method. ReLU$^3$ shallow neural network with predetermined neurons constructed from QMC points. Numerical integration: $6 \times 10^6$ quasi-Monte Carlo points.   }
        \label{tab:Neuman-RFM-ex1-6d-qmc}
    \end{table}

\begin{figure}[H]
    \centering
\includegraphics[width=0.95\linewidth]{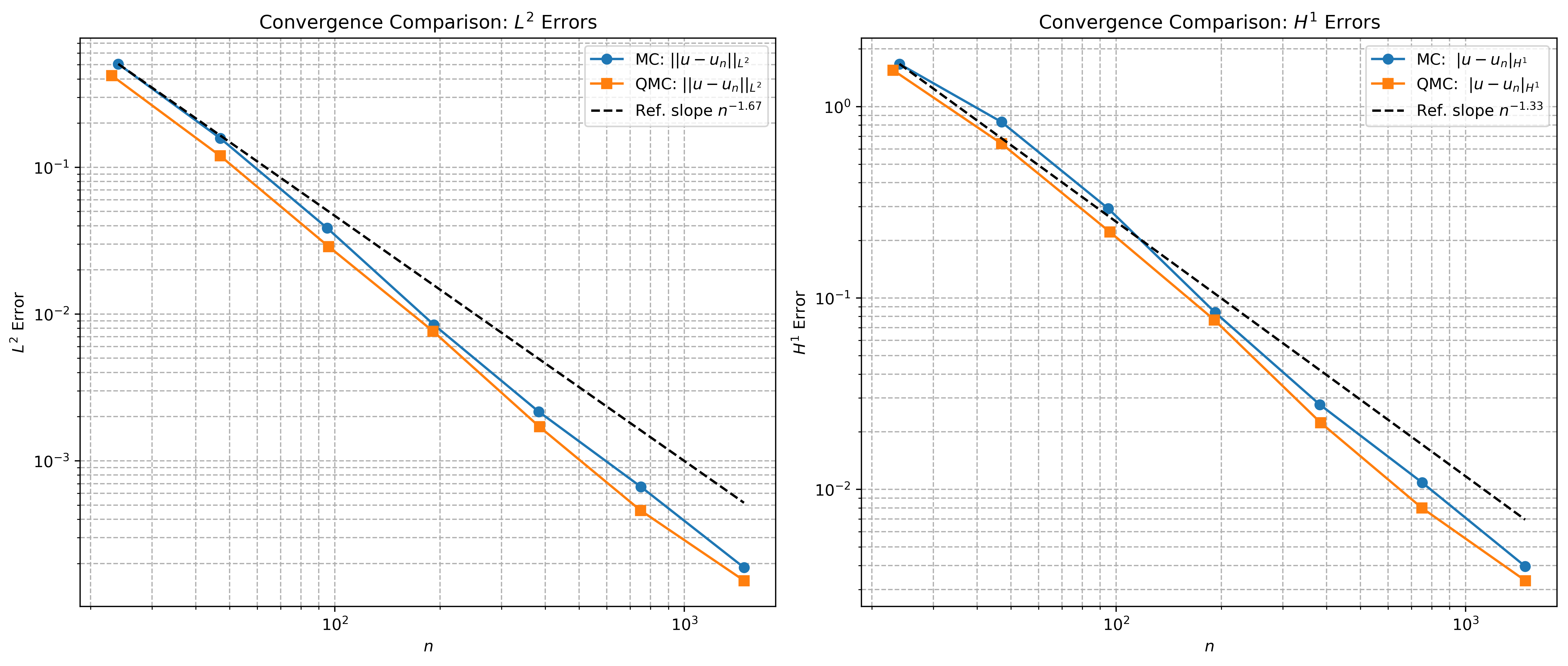}
    \caption{3D Neumann problem. ReLU$^3$. Comparison between using QMC points on $S^d$ and randomly sampled points. }
    \label{fig:3dNeumann-RFM-qmc}
\end{figure}

\begin{figure}[H]
    \centering
\includegraphics[width=0.95\linewidth]{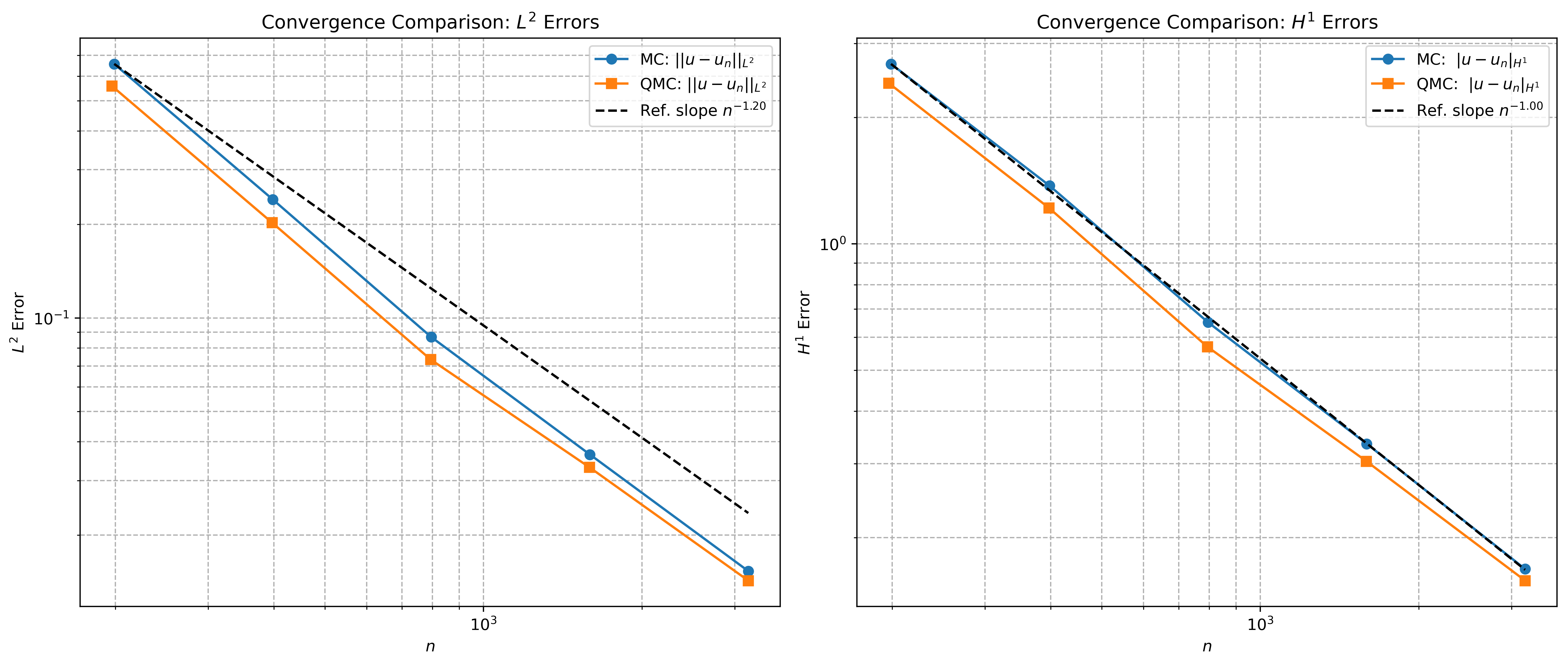}
    \caption{5D Neumann problem. ReLU$^3$. Comparison between using QMC points on $S^d$ and randomly sampled points.}
    \label{fig:5dNeumann-RFM-qmc}
\end{figure}

\begin{remark}
    Using the quasi-Monte Carlo points yields a more uniform distribution on hyperspheres, which can lead to a slight improvement in accuracy. 
   Alternatively, to get a quasi-uniform grid, one can also compute a centroidal Voronoi tessellation (CVT) on the hypersphere $S^d$ using Lloyd’s algorithm, following the framework in \cite{du1999centroidal, du2003constrained}. 
\end{remark}

\subsection{Tanh networks and deterministic schemes}
We have so far focused on deterministic ReLU$^k$ networks. 
Here, we also investigate the classical $\tanh$ activation, which is widely used in scientific machine learning.
Similar to the experiments conducted on ReLU$^k$ neural networks, we first compare the variational formulation with the collocation method.

We show that the variational formulation leads to a linear system that is numerically less stable due to its extremely poor conditioning, which severely limits its practical applicability.
In contrast, in the collocation formulation, $\tanh$ activations can still achieve high numerical accuracy, provided that numerically robust least-squares solvers are employed.
These observations align with common practice: $\tanh$ is typically used in collocation formulations, whereas ReLU-type activations are more common in variational or Galerkin formulations.

Furthermore, we propose two deterministic schemes for setting the parameters in the nonlinear layer of the $\tanh$ neural network.
Both schemes achieve accuracy comparable to that of the commonly used random sampling strategies in ELM and RFM. 
This demonstrates that randomness may not be essential for achieving high accuracy in these methods.

We first consider $L^2$ minimization problems in one and two dimensions, 
with target functions taken from the sinusoidal family. 
In one dimension, we use
\begin{equation} \label{eq:tanh-l2min-1d}
u(x) = \sin(m \pi x), \quad m \in \{1,2,4\},
\end{equation}
and in two dimensions
\begin{equation}\label{eq:tanh-l2min-2d}
u(x) = \sin(m \pi x_1)\,\sin(m \pi x_2), \quad m \in \{1,2,4\}.    
\end{equation}
The trial space is chosen as $V_n = \mathrm{span}\{\phi_j\}_{j=1}^n$, 
with neurons
\[
\phi_j(x) = \tanh(\omega_j \cdot x + b_j).
\]

To fix the nonlinear layer parameters, we sample $\{(\omega_j,b_j)\}_{j=1}^n$ 
independently from the uniform distribution on $[-R,R]^{d+1}$ for some 
appropriate parameter $R$ that requires manual tuning. 
This initialization is widely used in the extreme learning machine (ELM) literature and in random feature methods. 
It has been demonstrated empirically to yield fast, in many cases 
exponential, convergence in practical problems, although a rigorous 
theoretical justification of this phenomenon remains open.

In the following, we solve the $L^2$-minimization problem using the variational formulation and the collocation formulation,  respectively. 
\subsubsection{Continuous $L^2$-minimization}
In the variational formulation, to assemble the mass matrix, we use a piecewise Gauss quadrature.
In 1D, we divide the interval into 1024 uniform subintervals and use a Gauss quadrature of order 5 in each subinterval.
In 2D, we divide the square domain into $100^2$ uniform subdomains and use a Gauss quadrature of order 5 in each subdomain.
\begin{figure}[H]
    \centering
    \includegraphics[width=0.325\linewidth]{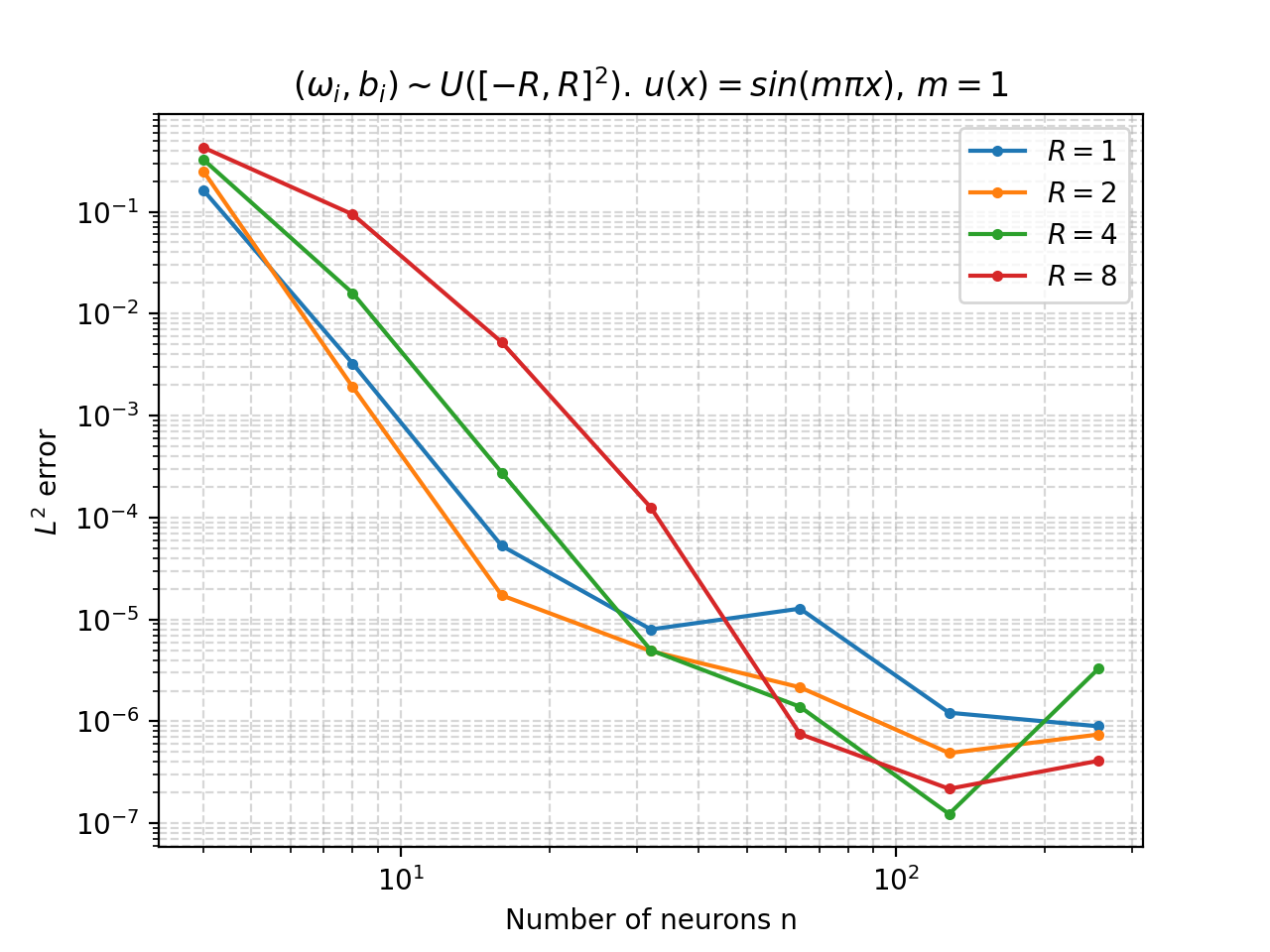}
    \includegraphics[width=0.325\linewidth]{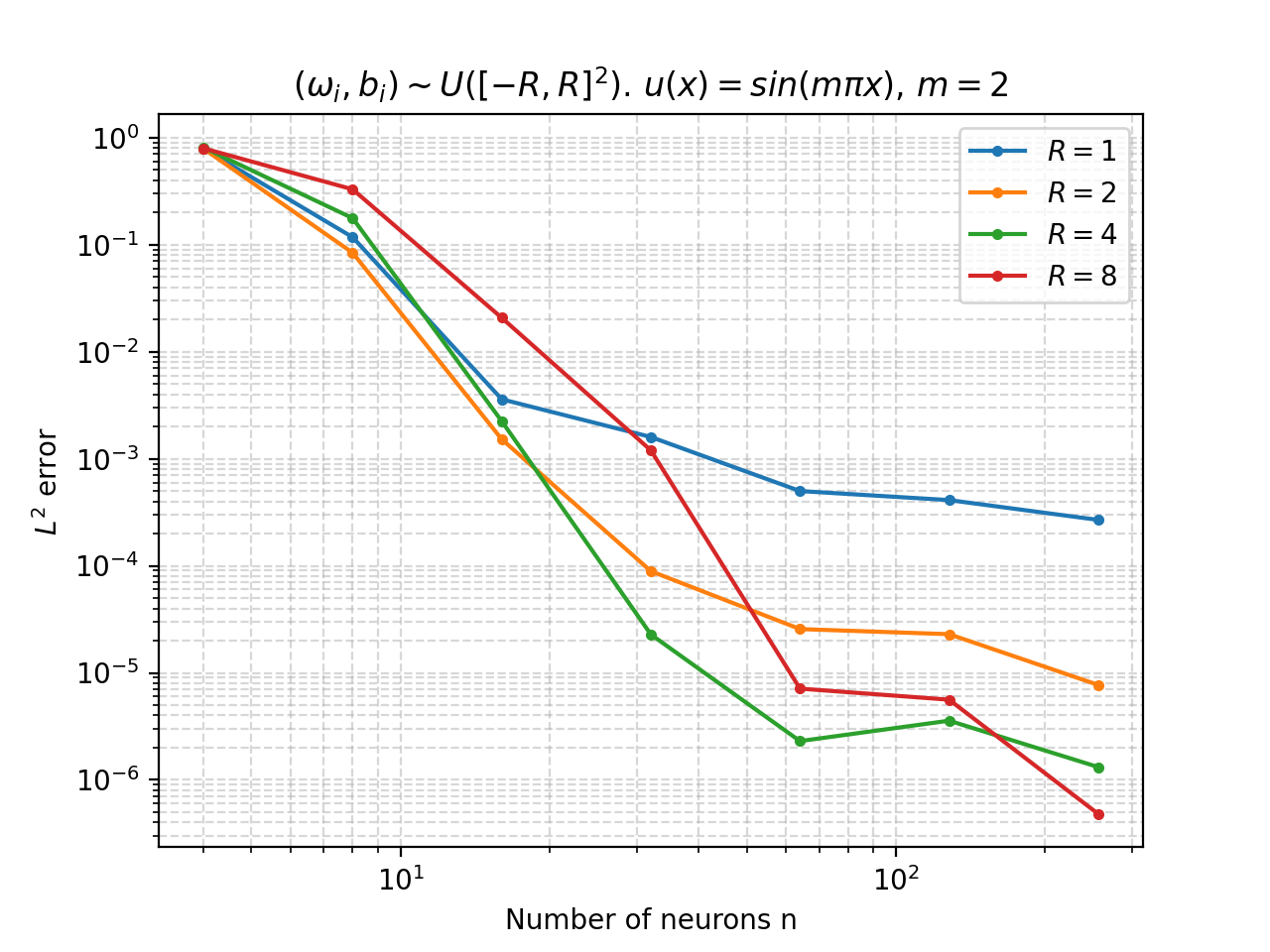}
    \includegraphics[width=0.325\linewidth]{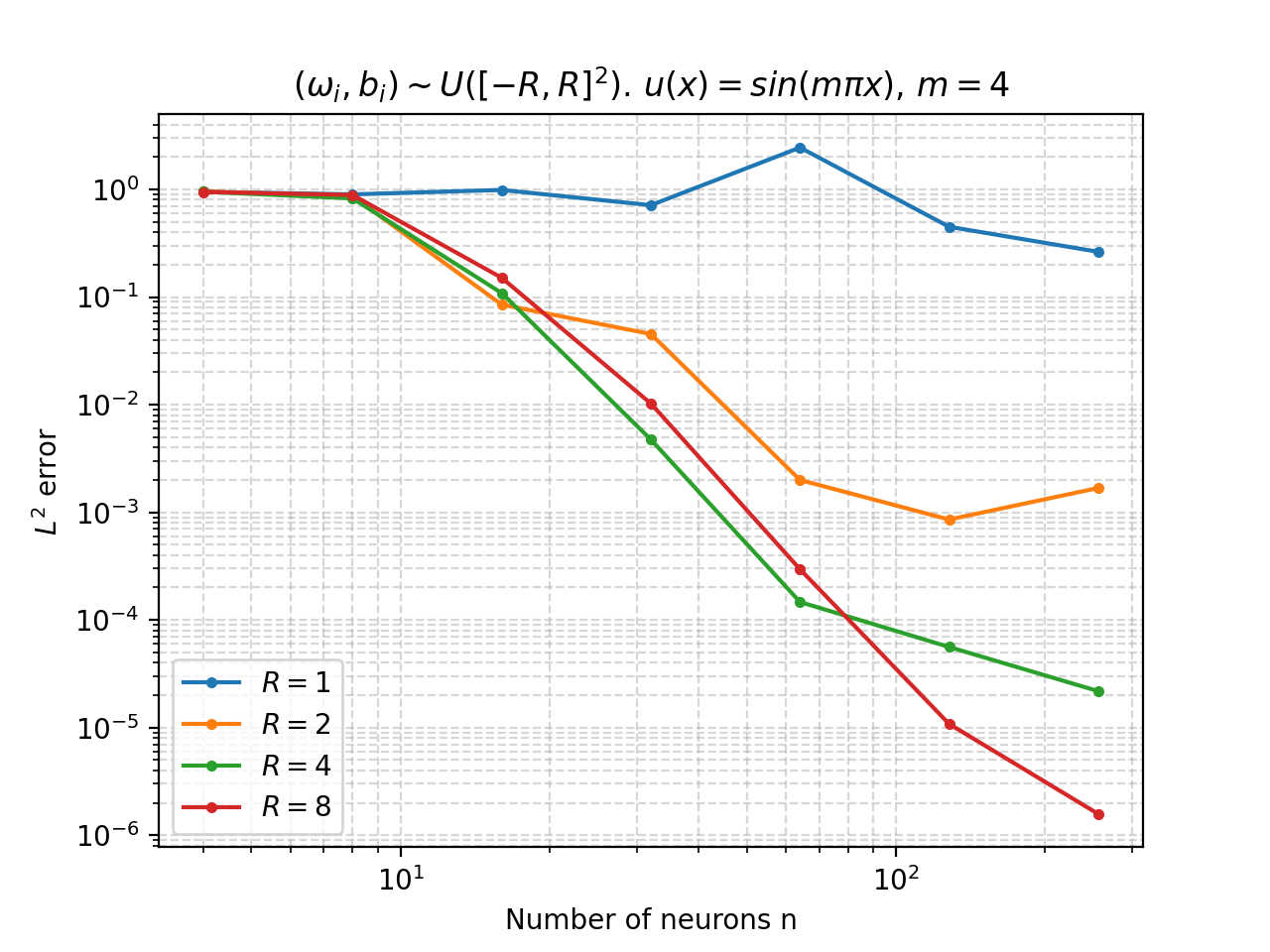} 
    \caption{1D $L^2$ minimization with $\tanh$ activations. Variational formulation. 
    Target function $u(x) = \sin(m\pi x)$ for $m=1,2,4$.}
    \label{fig:1dl2-tanh-var}
\end{figure}

\begin{figure}[H]
    \centering
    \includegraphics[width=0.325\linewidth]{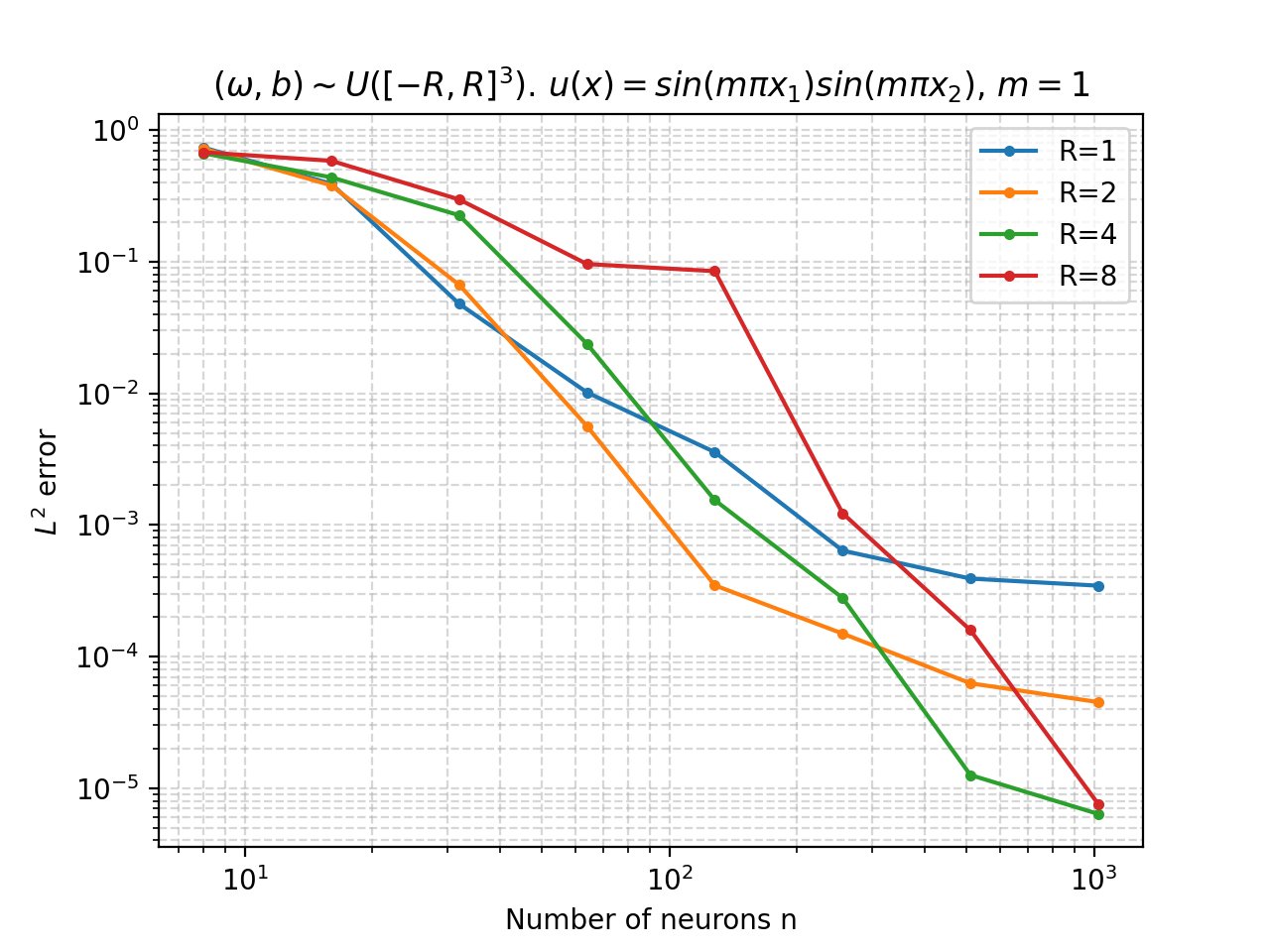}
    \includegraphics[width=0.325\linewidth]{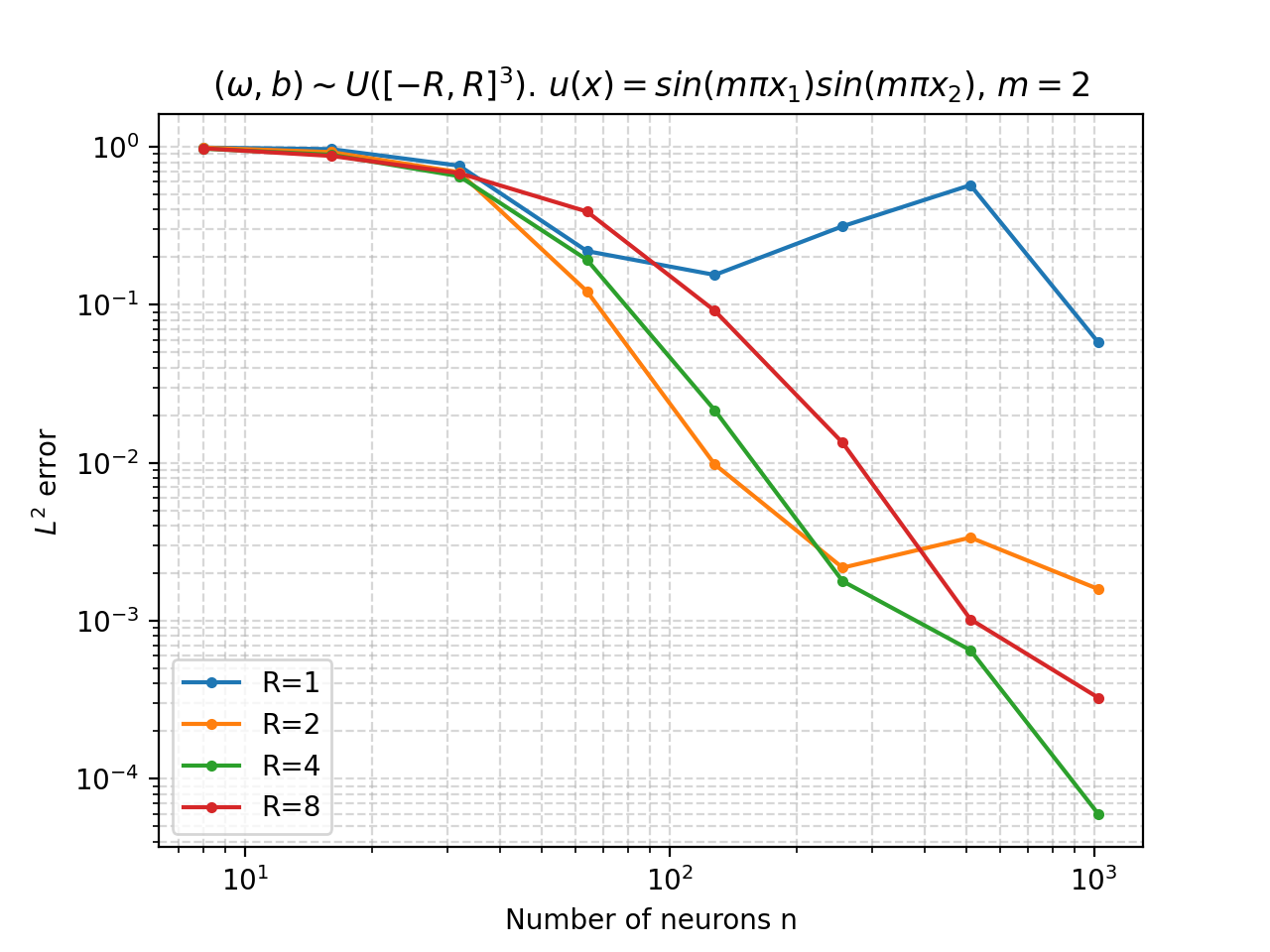}
    \includegraphics[width=0.325\linewidth]{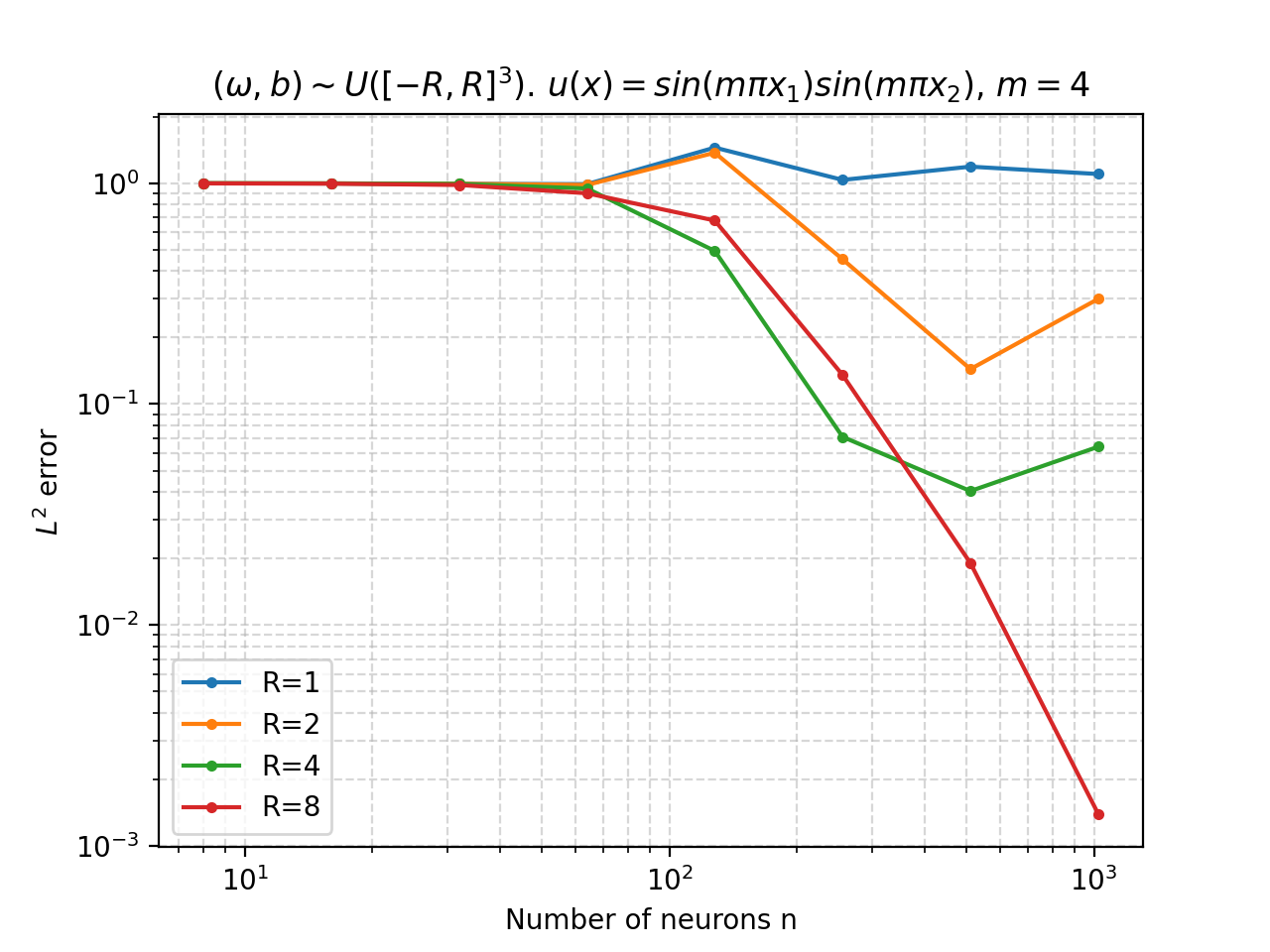} 
    \caption{2D $L^2$ minimization with $\tanh$ activations. Variational formulation. 
    Target function $u(x) = \sin(m\pi x_1)\sin(m\pi x_2)$ for $m=1,2,4$.}
    \label{fig:2dl2-tanh-var}
\end{figure}

\subsubsection{Discrete $\ell^2$-regression}

We now consider the discrete least-squares regression problem, which corresponds to the collocation formulation introduced in 
Section~\ref{sec:methods}.  
In one dimension, the collocation points are chosen as a uniform grid of size $1024$ over the interval, including the boundary points. 
In two dimensions, a uniform $50\times50$ tensor-product grid is used over the domain, including the boundary points. 
To evaluate the numerical errors, we measure it under the continuous $L^2$-norm by using an accurate piecewise Gauss quadrature method. 
In 1D, we divide the interval into 1024 uniform subintervals and use a Gauss quadrature of order 5 in each subinterval.
In 2D, we divide the square domain into $50^2$ uniform subdomains and use a Gauss quadrature of order 5 in each subdomain.

\begin{figure}[H]
    \centering
    \includegraphics[width=0.325\linewidth]{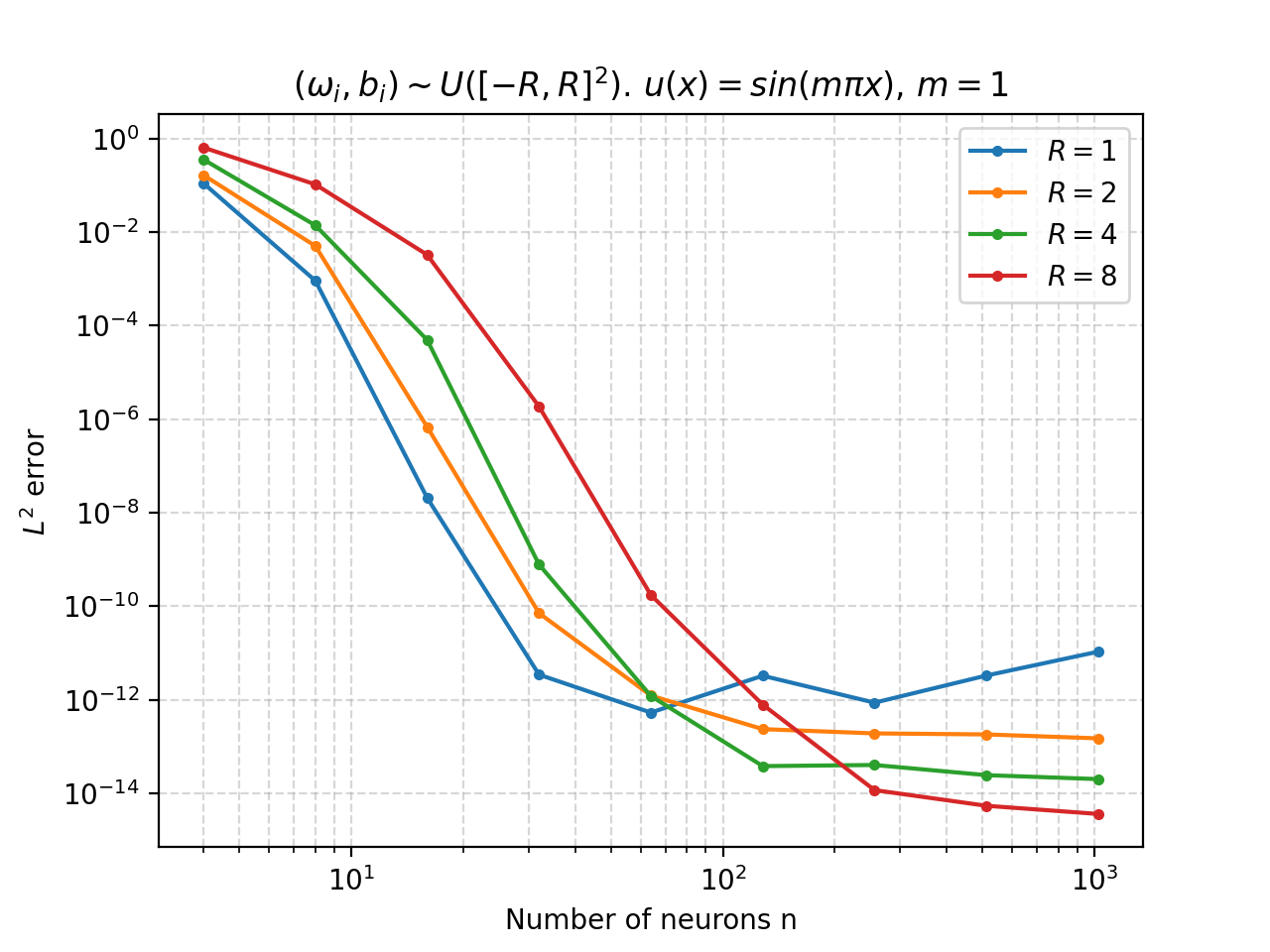}
    \includegraphics[width=0.325\linewidth]{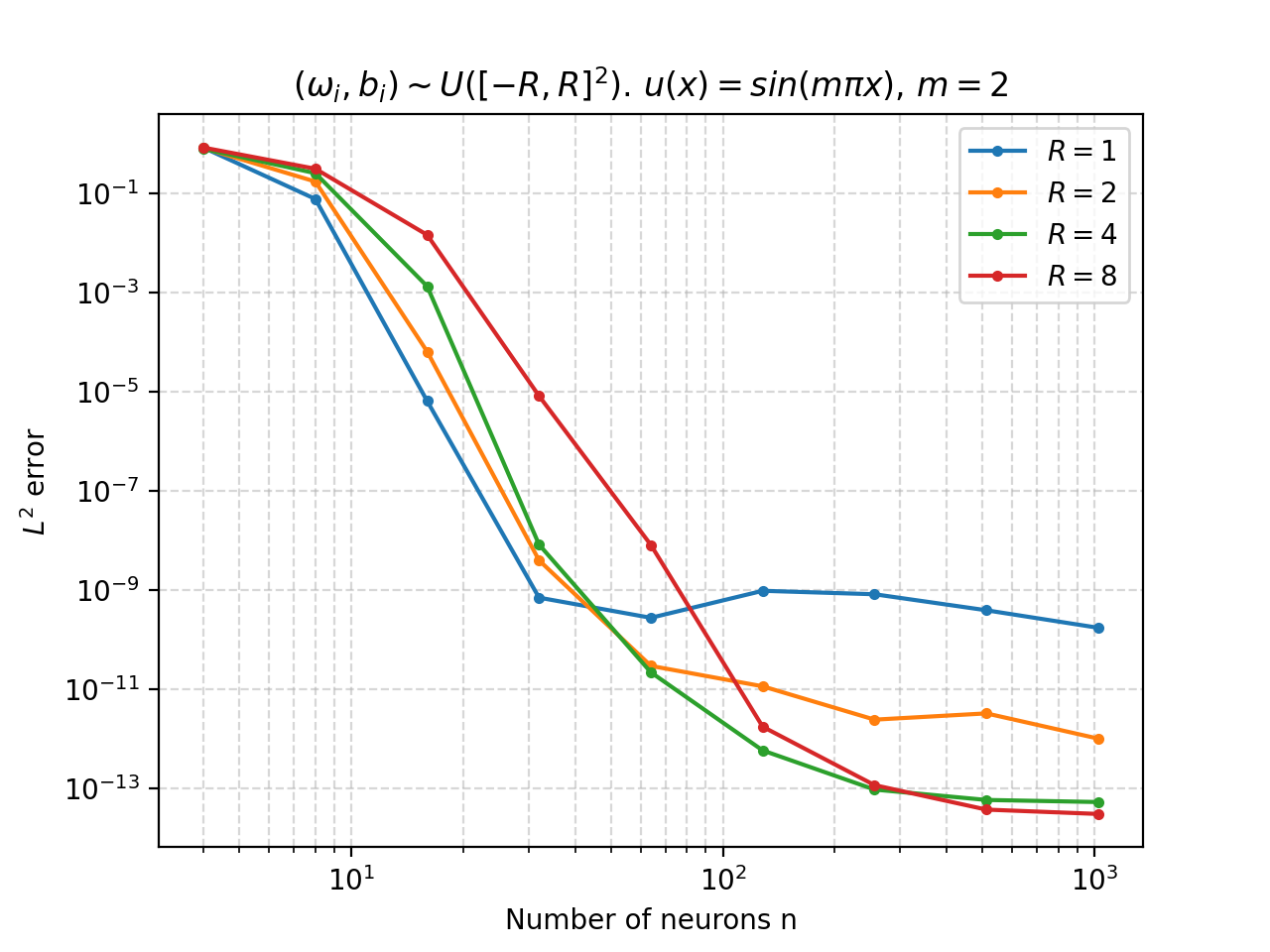}
    \includegraphics[width=0.325\linewidth]{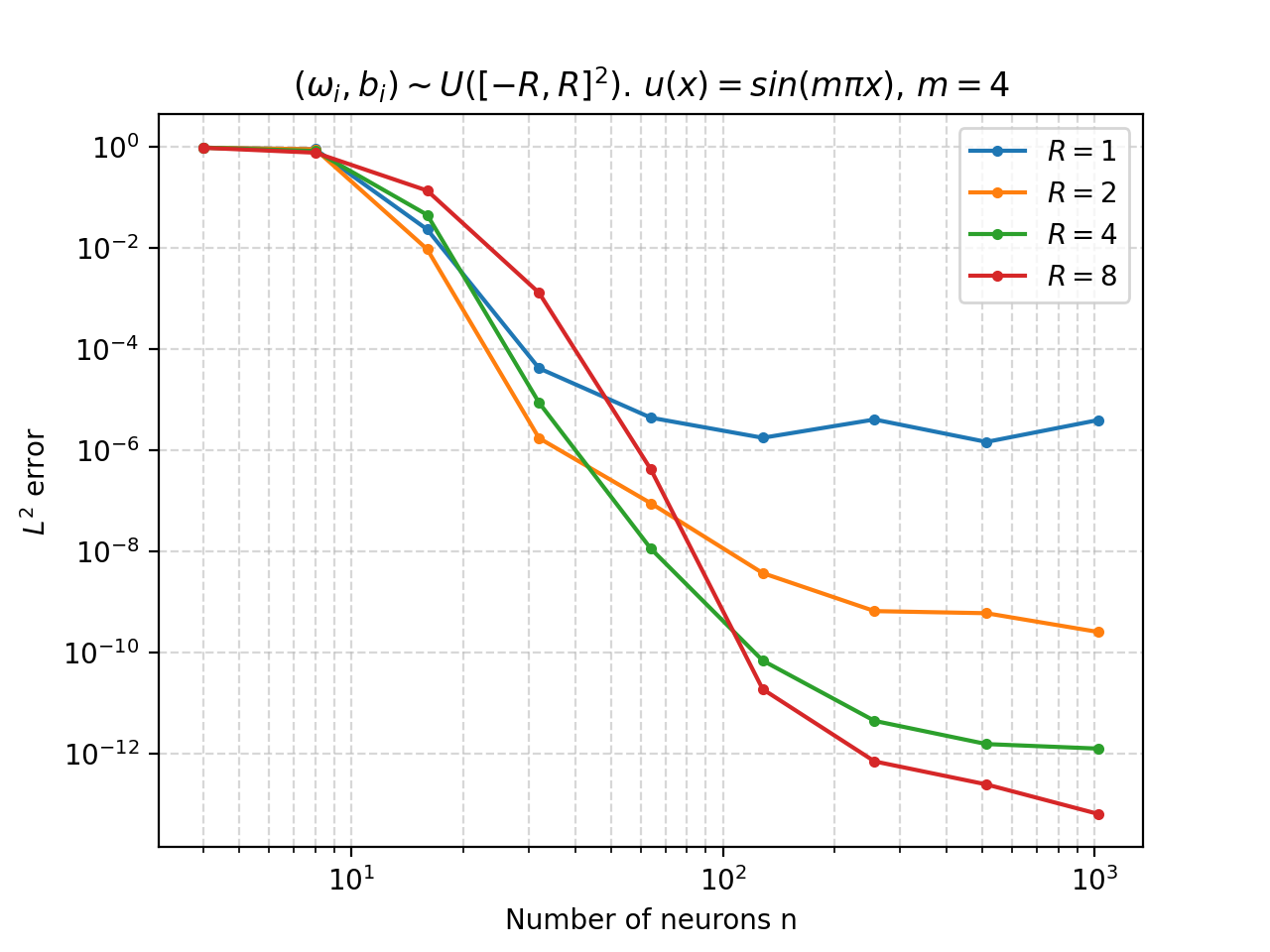}
    \caption{1D $L^2$ minimization with $\tanh$ activations. Collocation formulation. 
    Target function $u(x) = \sin(m\pi x)$ for $m=1,2,4$. }
    \label{fig:1dl2-tanh-col}
\end{figure}

\begin{figure}[H]
    \centering
    \includegraphics[width=0.325\linewidth]{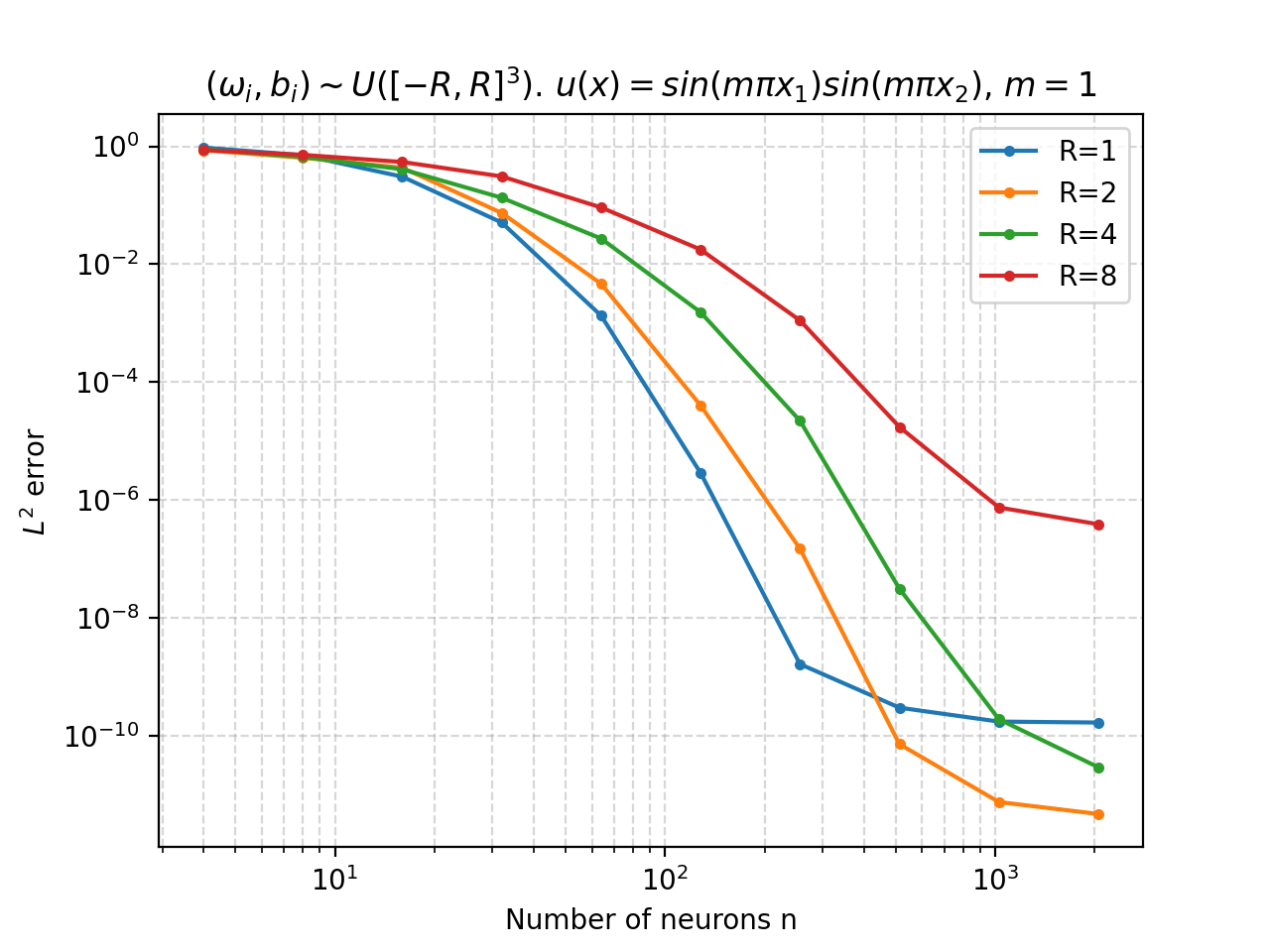}
    \includegraphics[width=0.325\linewidth]{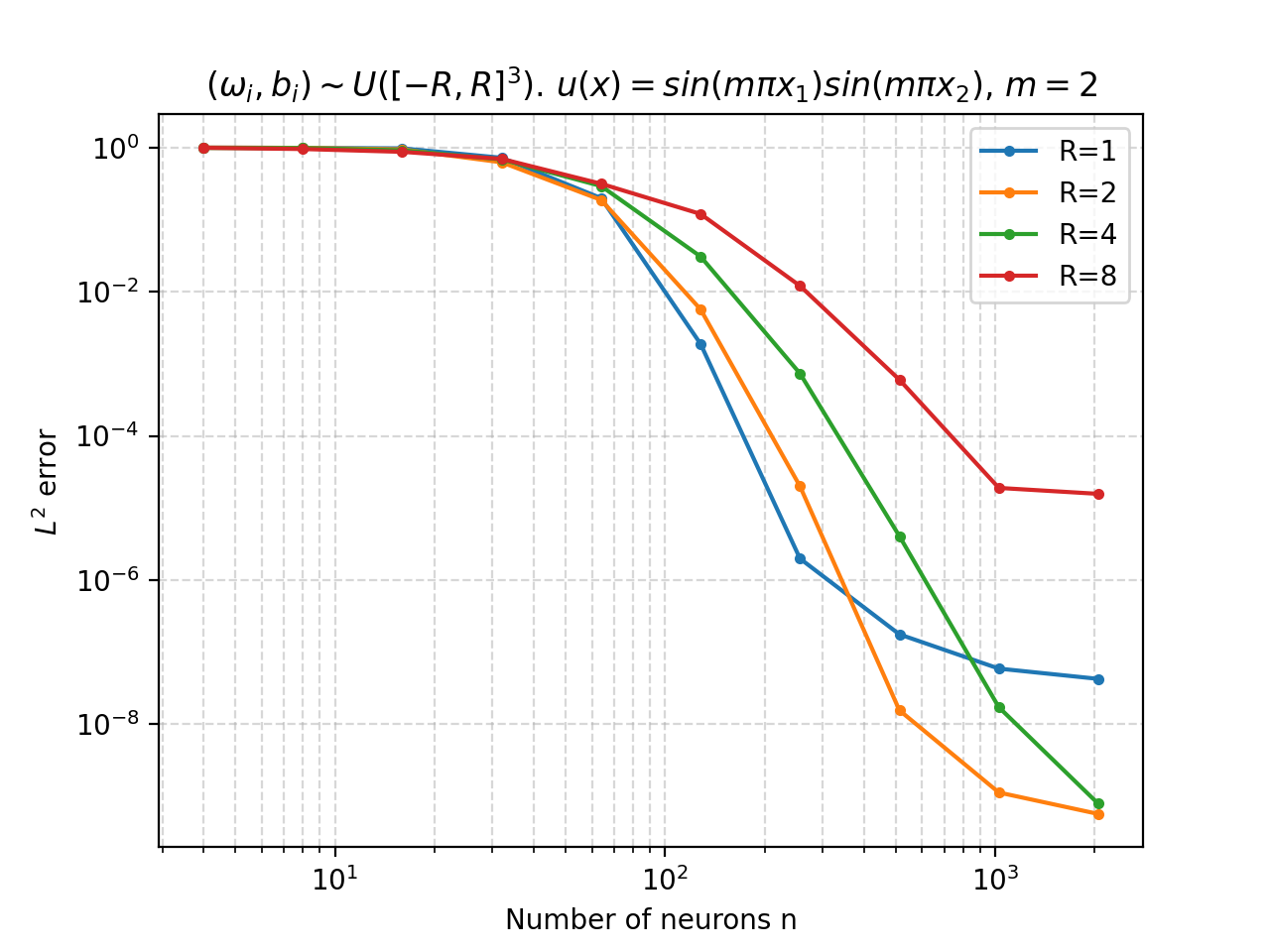}
    \includegraphics[width=0.325\linewidth]{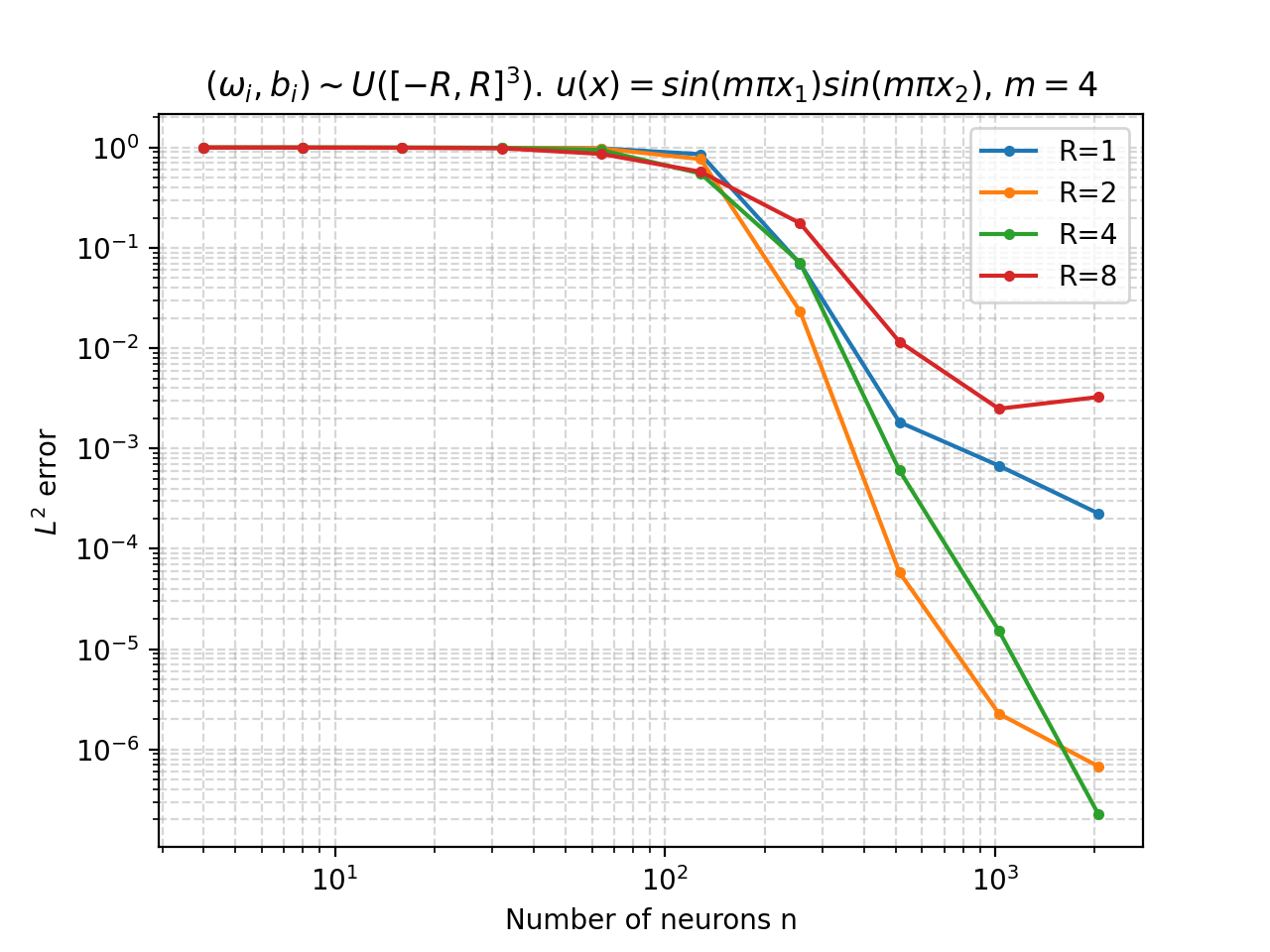}
    \caption{2D $L^2$ minimization with $\tanh$ activations. Collocation formulation. 
    Target function $u(x) = \sin(m\pi x_1)\sin(m\pi x_2)$ for $m=1,2,4$. }
    \label{fig:2dl2-tanh-col}
\end{figure}

\textbf{Observations.}  
For the $L^2$ minimization tasks in both one and two dimensions, the collocation formulation consistently achieves significantly higher accuracy than the variational formulation.
Using the collocation method, errors below $10^{-14}$ are attained in one dimension and below $10^{-10}$ in two dimensions with only a few hundred to a thousand neurons, demonstrating very rapid convergence. 
In contrast, when the variational formulation is used, the achievable accuracy is substantially lower. Errors of order $10^{-7}$ in one dimension and $10^{-5}$ in two dimensions are observed under comparable settings.

\subsubsection{Condition numbers and numerical instability}

We now examine the conditioning of the mass matrices arising in the variational formulation 
with $\tanh$ activations. Tables~\ref{tab:condition-tanh-1d} and~\ref{tab:condition-tanh-2d} 
report the condition numbers for one- and two-dimensional $L^2$ minimization problems, 
as the number of neurons $n$ increases. 
In both cases, the condition numbers grow extremely rapidly with~$n$, 
indicating severe ill-conditioning in the underlying linear systems. 

\begin{table}[H]
    \centering
    \begin{tabular}{c|c}
        $n$ & condition number \\ \hline \hline 
        4 & 1.8e05\\ \hline 
        8 & 5.5e13 \\ \hline 
        16 & 1.8e16 \\ \hline 
        32 & 2.4e17\\ \hline 
    \end{tabular}
    \caption{Condition numbers of the mass matrix for 1D $L^2$ minimization with $\tanh$ activations.}
    \label{tab:condition-tanh-1d}
\end{table}

\begin{table}[H]
    \centering
    \begin{tabular}{c|c}
        $n$ & condition number \\ \hline \hline 
        4 & 2.4e05\\ \hline 
        8 & 5.1e07 \\ \hline 
        16 & 5.7e10\\ \hline 
        32 & 1.6e16\\ \hline 
    \end{tabular}
    \caption{Condition numbers of the mass matrix for 2D $L^2$ minimization with $\tanh$ activations.}
    \label{tab:condition-tanh-2d}
\end{table}




\subsubsection{Deterministic parameter schemes }
In this subsection, we introduce two deterministic approaches for setting the nonlinear layer parameters of the $\tanh$ shallow neural network.

The first scheme is motivated by Petrushev’s result in \cite{petrushev1998approximation}, and hence we refer to it as Petrushev's scheme. 
The second scheme is inspired by the parameter selection methods used for ReLU$^k$ shallow neural networks in \cite{LMX2025}.
Since $\tanh$ is non-homogeneous, we set the nonlinear layer parameters on a quasi-uniform grid over $r \times S^d$ for a suitable radius $r$, where $S^d$ denotes the $d$-dimensional unit sphere in $\mathbb{R}^{d+1}$.
We refer to this as the sphere scheme.
The two schemes are described as follows.

\begin{enumerate}
    \item \textbf{Petrushev's scheme.}  
    Let $\Theta_1 := \{\omega_i \}_{i=1}^{n_1}$ denote a quasi-uniform grid on $r_1 S^{d-1}$, the $(d-1)$-sphere of radius $r_1$, and let $\Theta_2 := \{b_i\}_{i=1}^{n_2}$ denote a uniform grid on the interval $I := [-r_2, r_2]$ for some appropriate value of $r_2$.  
    We then form the tensor product of these two grids to obtain the parameter set  
    \[
    \Theta := \{ (\omega_i, b_j) \mid \omega_i \in \Theta_1,\, b_j \in \Theta_2 \}.
    \]
    This defines a set of points on the product domain $r_1 S^{d-1} \times [-r_2,r_2]$, which is used to set the nonlinear layer parameters.  
    \item \textbf{Sphere scheme.}  
    The parameters $(\omega_i, b_i)$ form a quasi-uniform grid on $r \times S^d$, where $S^d$ denotes the $d$-dimensional unit sphere in $\mathbb{R}^{d+1}$.  
\end{enumerate}

\textbf{$L^2$-minimization problems}

In the following, we solve the same $L^2$-minimization problems with target functions given by \eqref{eq:tanh-l2min-1d} in 1D and \eqref{eq:tanh-l2min-2d} in 2D using the collocation methods. 
The function space is chosen as $V_n = \mathrm{span}\{\phi_j\}_{j=1}^n$, 
where the neuron functions are defined as
\[
\phi_j(x) = \tanh(\omega_j \cdot x + b_j),
\]
and $(\omega_j,b_j)$ are set according to the two schemes introduced above. 
In one dimension, the collocation points are chosen as a uniform grid of size $1024$ over the interval, including the boundary points. 
In two dimensions, a uniform $50\times50$ tensor-product grid is used over the domain, including the boundary points. 
To evaluate the numerical errors, we measure it under the continuous $L^2$-norm by using an accurate piecewise Gauss quadrature method. 
In 1D, we divide the interval into 1024 uniform subintervals and use a Gauss quadrature of order 5 in each subinterval.
In 2D, we divide the square domain into $50^2$ uniform subdomains and use a Gauss quadrature of order 5 in each subdomain.

The numerical results obtained using Petrushev's scheme are plotted in Figures~\ref{fig:1dl2-tanh-col-petrushev} and~\ref{fig:2dl2-tanh-col-petrushev}, showing rapid convergence when appropriate values of $r_1$ and $r_2$ are chosen. 
The numerical results obtained using the sphere scheme are plotted in Figures~\ref{fig:1dl2-tanh-col-sphere} and~\ref{fig:2dl2-tanh-col-sphere}, also showing rapid convergence when an appropriate radius $r$ is selected. 

\begin{figure}[H]
    \centering
    \includegraphics[width=0.325\linewidth]{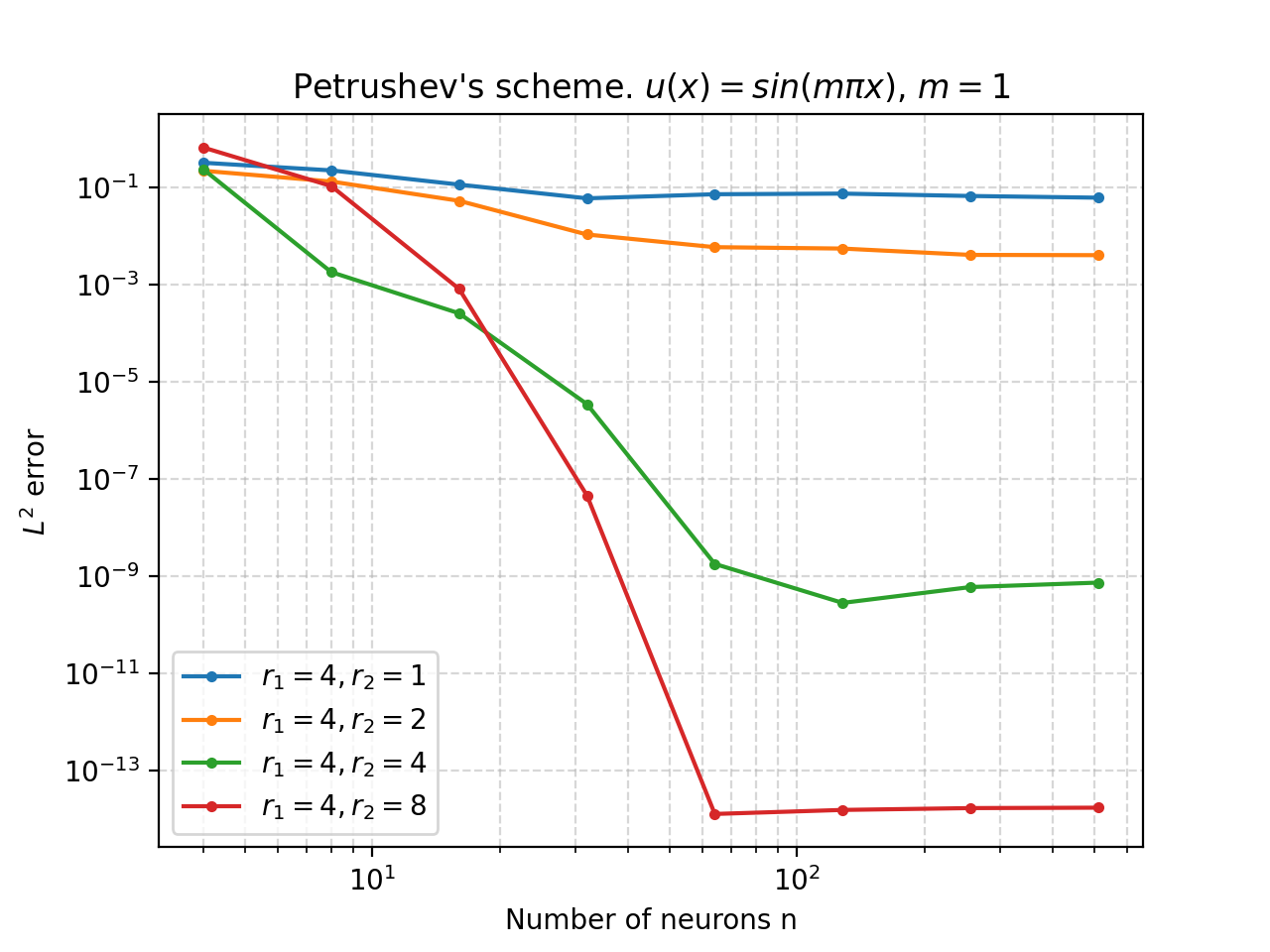}
    \includegraphics[width=0.325\linewidth]{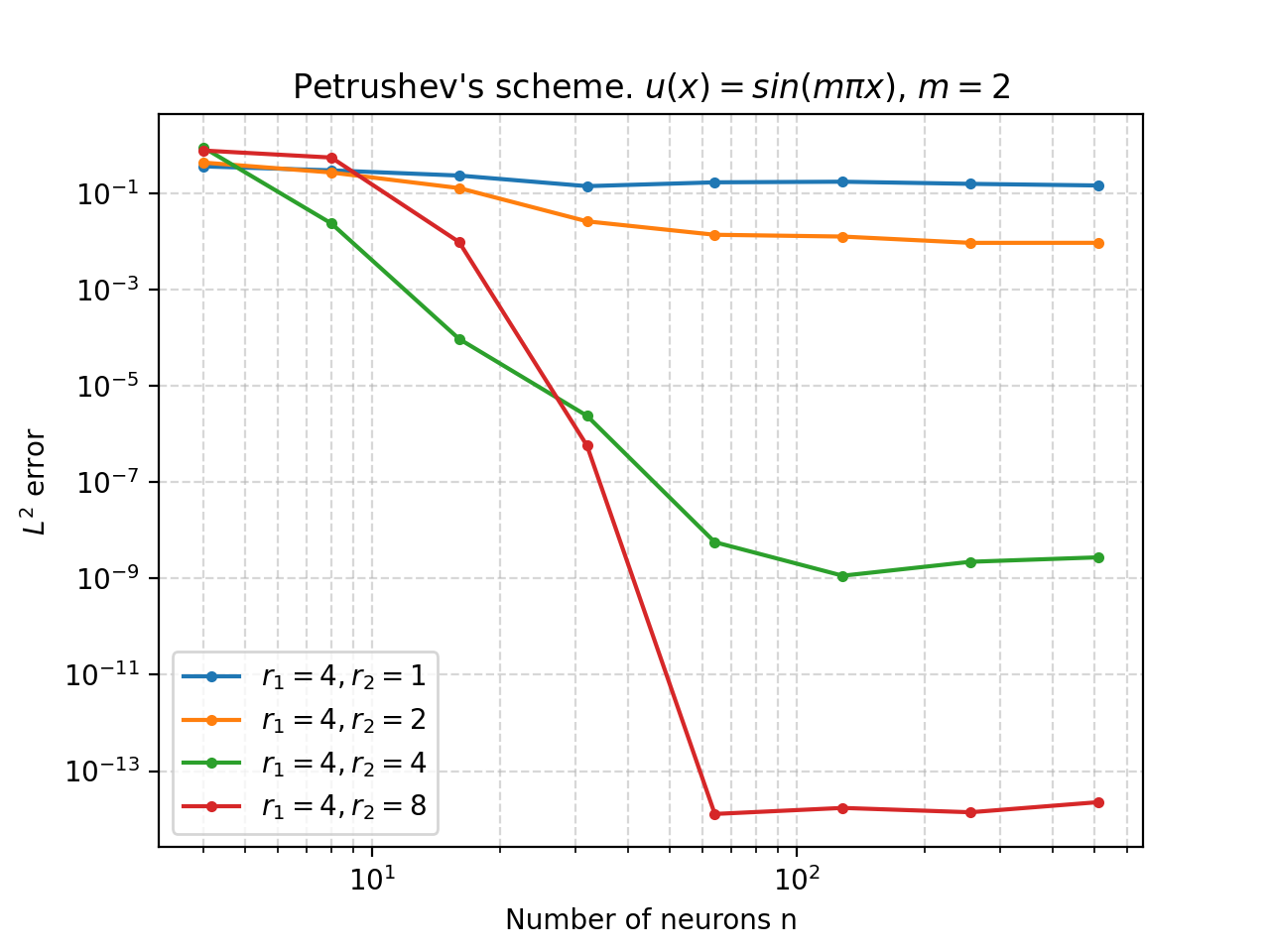}
    \includegraphics[width=0.325\linewidth]{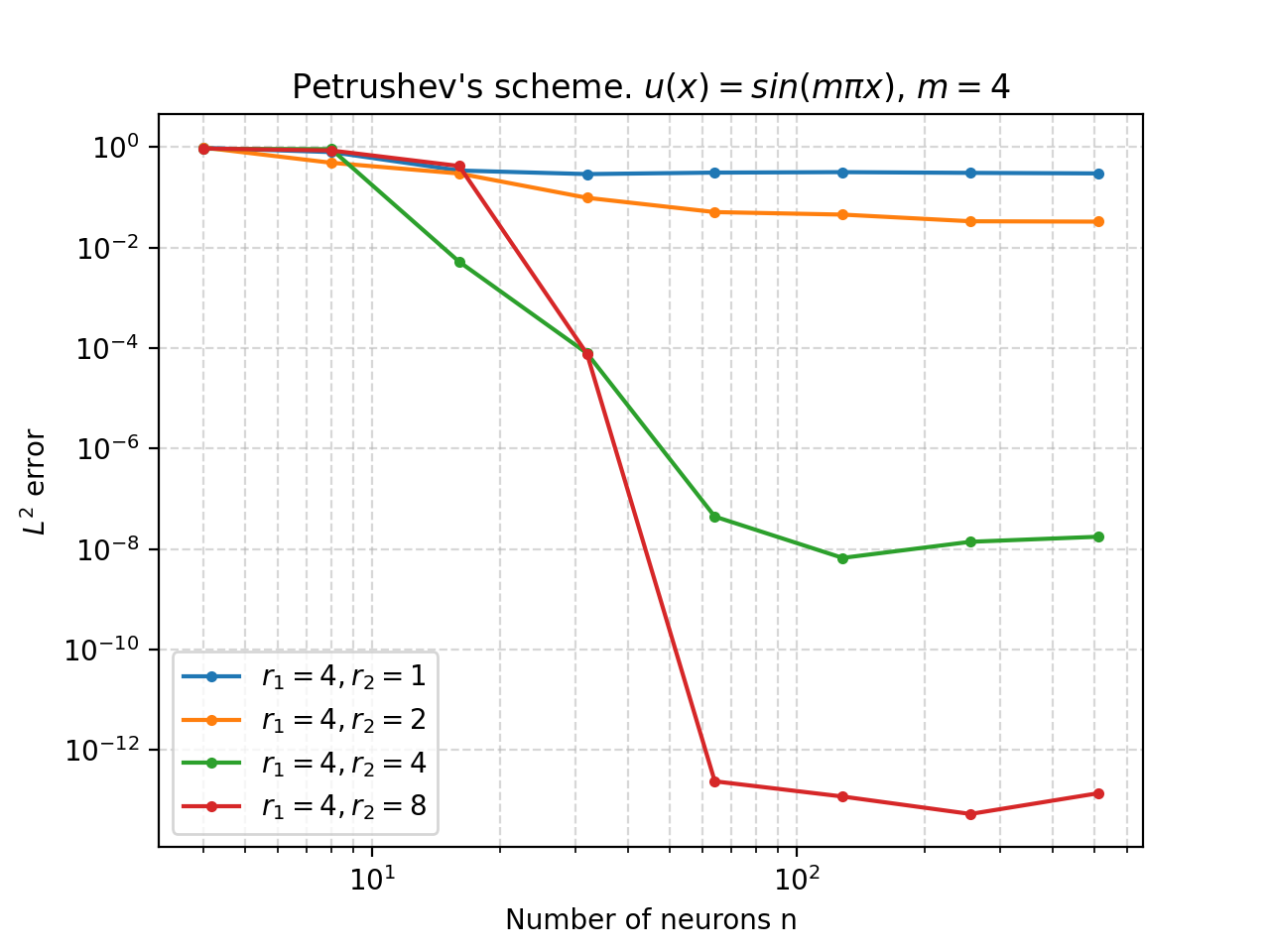}
    \caption{1D $L^2$ minimization with $\tanh$ activations. Collocation formulation. 
    Target function $u(x) = \sin(m\pi x)$ for $m=1,2,4$. }
    \label{fig:1dl2-tanh-col-petrushev}
\end{figure}

\begin{figure}[H]
    \centering
    \includegraphics[width=0.325\linewidth]{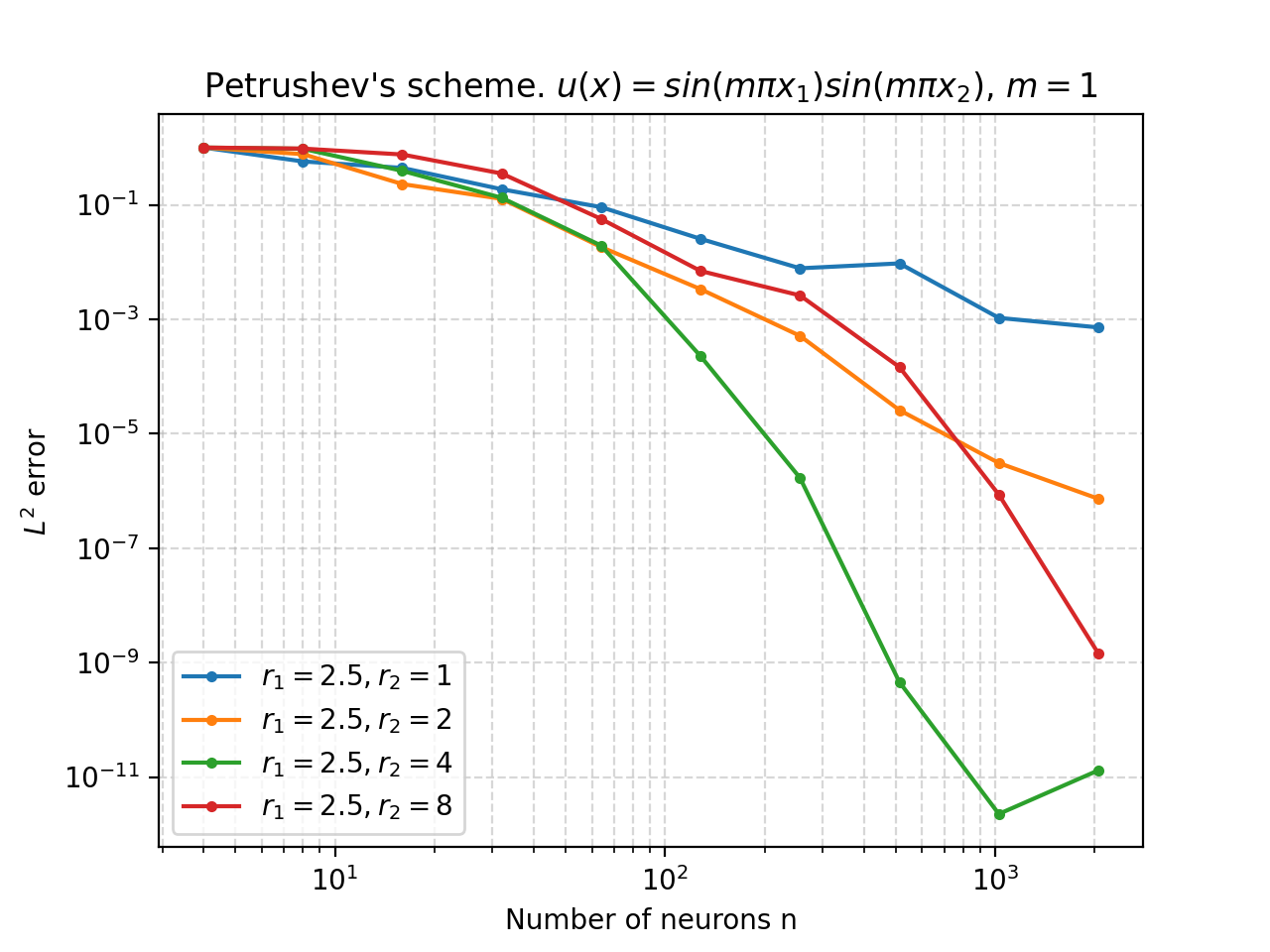}
    \includegraphics[width=0.325\linewidth]{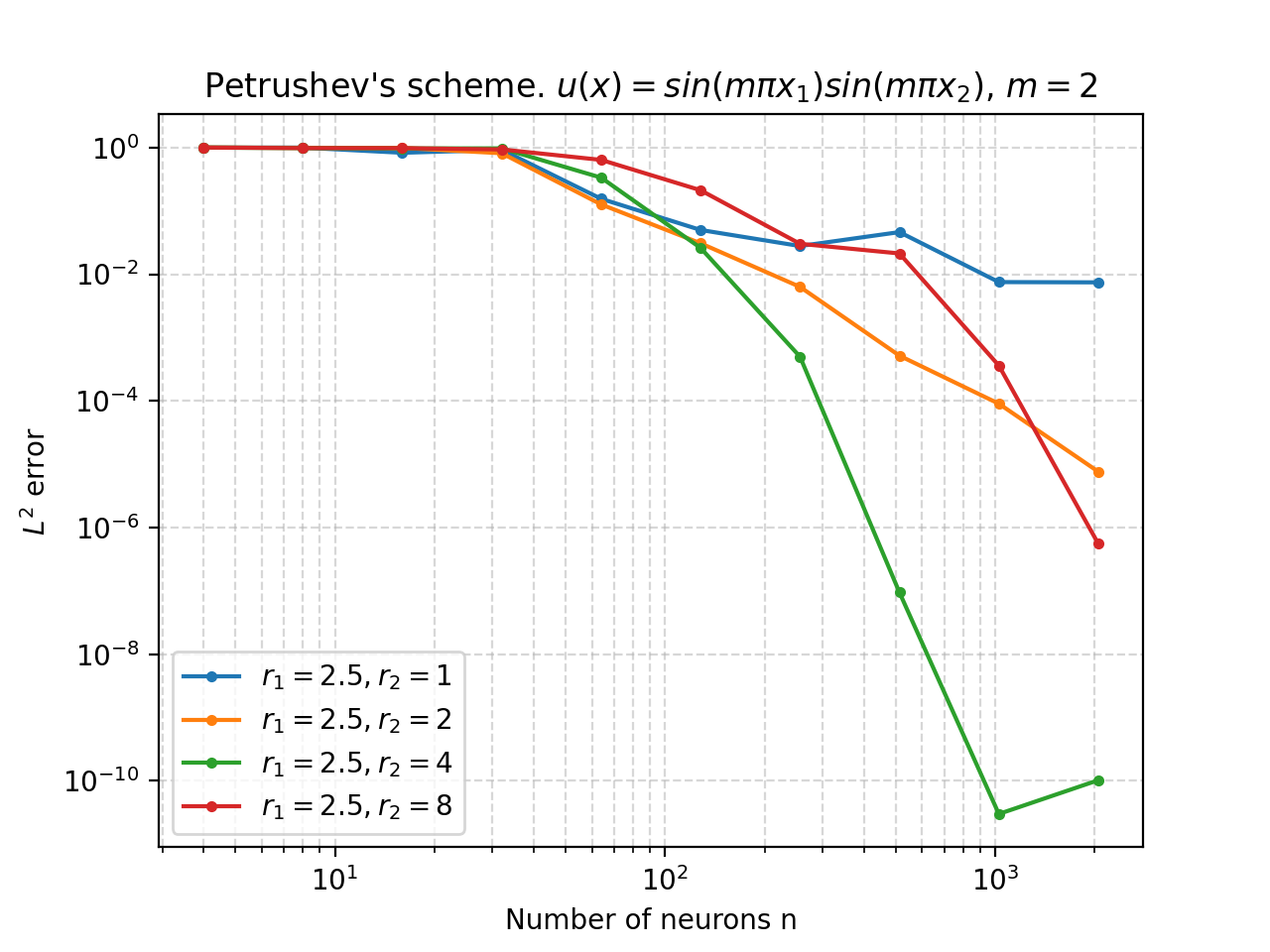}
    \includegraphics[width=0.325\linewidth]{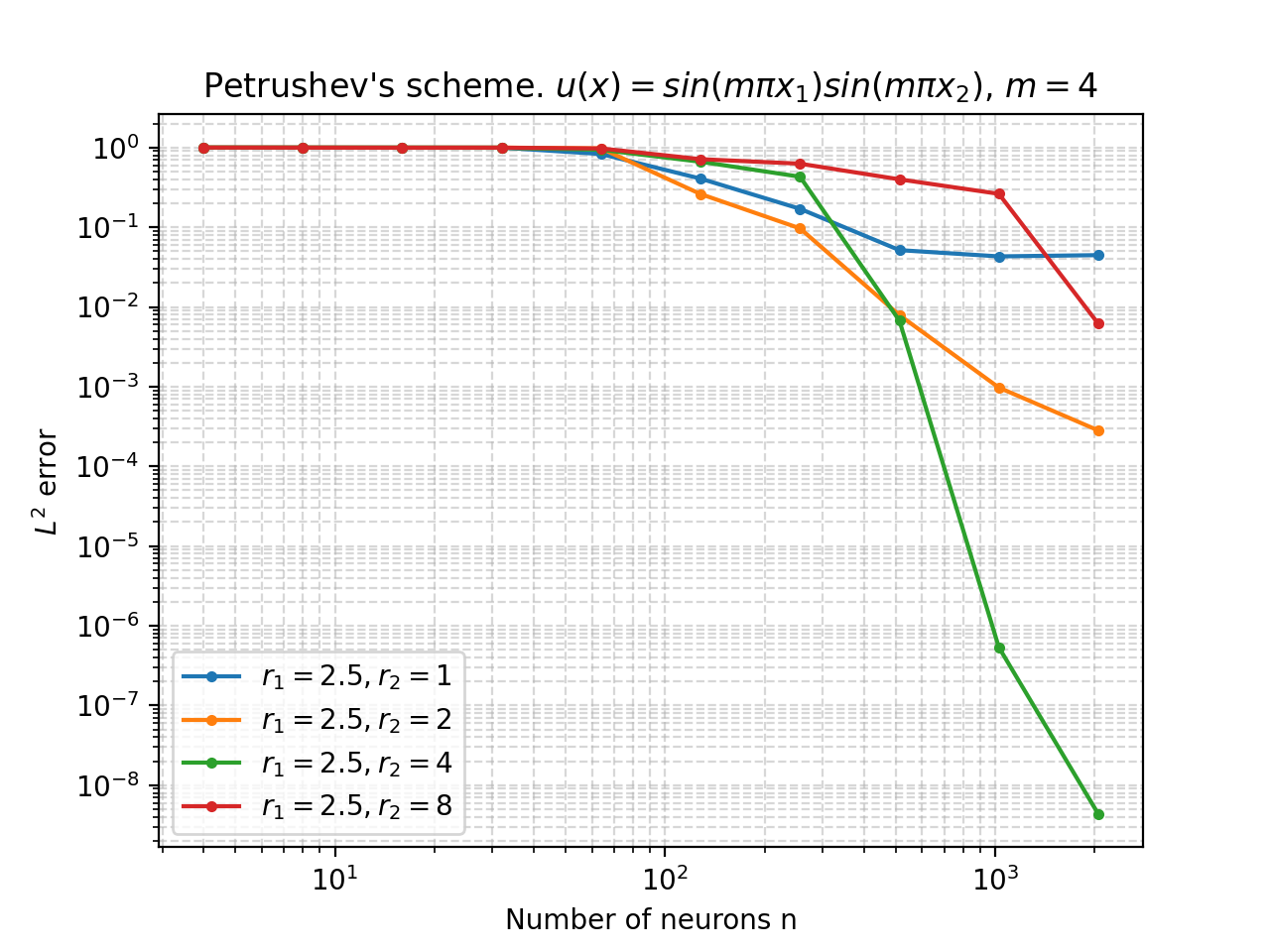}
    \caption{2D $L^2$ minimization with $\tanh$ activations. Collocation formulation. 
    Target function $u(x) = \sin(m\pi x_1)\sin(m\pi x_2)$ for $m=1,2,4$. }
    \label{fig:2dl2-tanh-col-petrushev}
\end{figure}

\begin{figure}[H]
    \centering
    \includegraphics[width=0.325\linewidth]{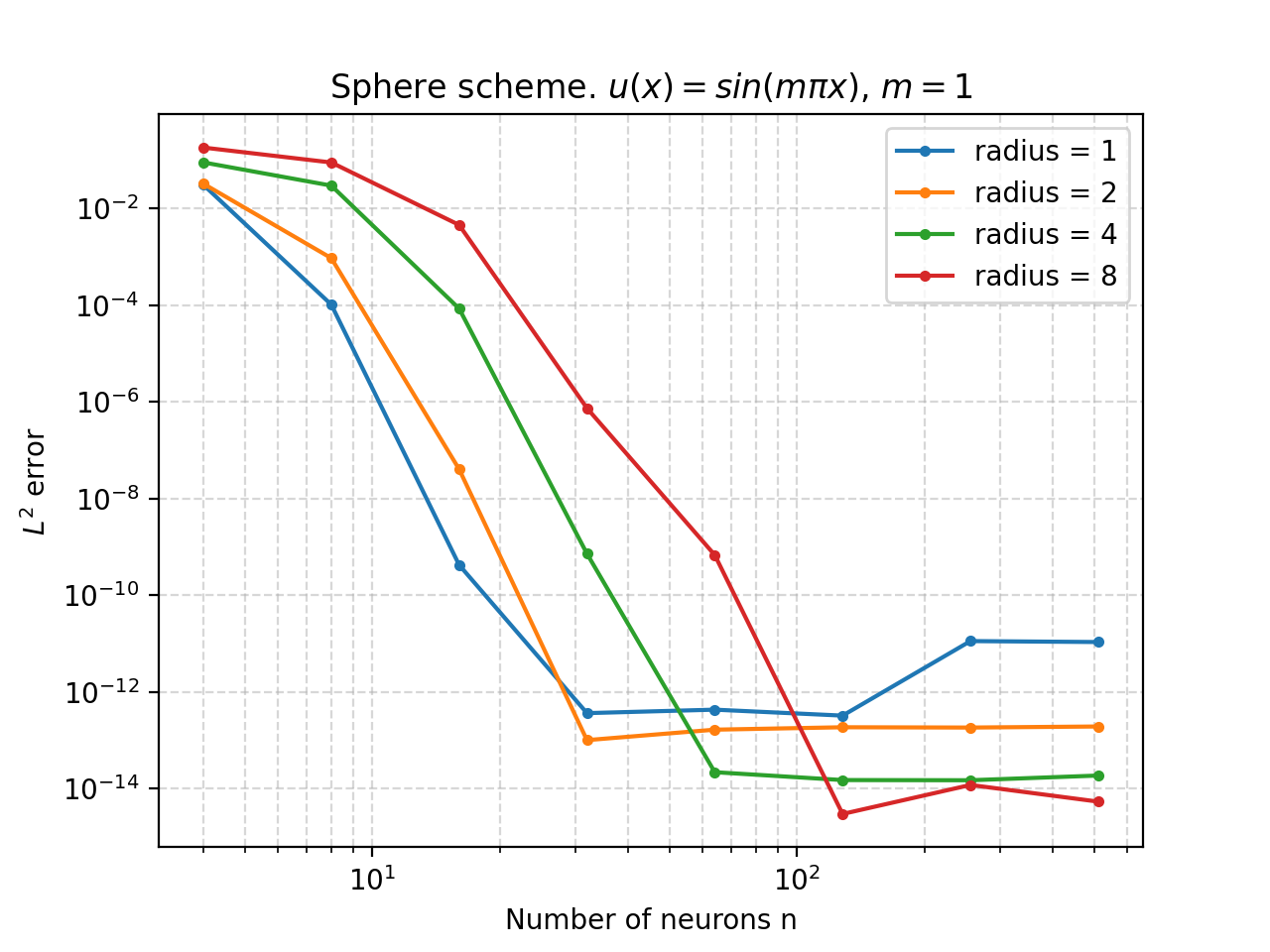}
    \includegraphics[width=0.325\linewidth]{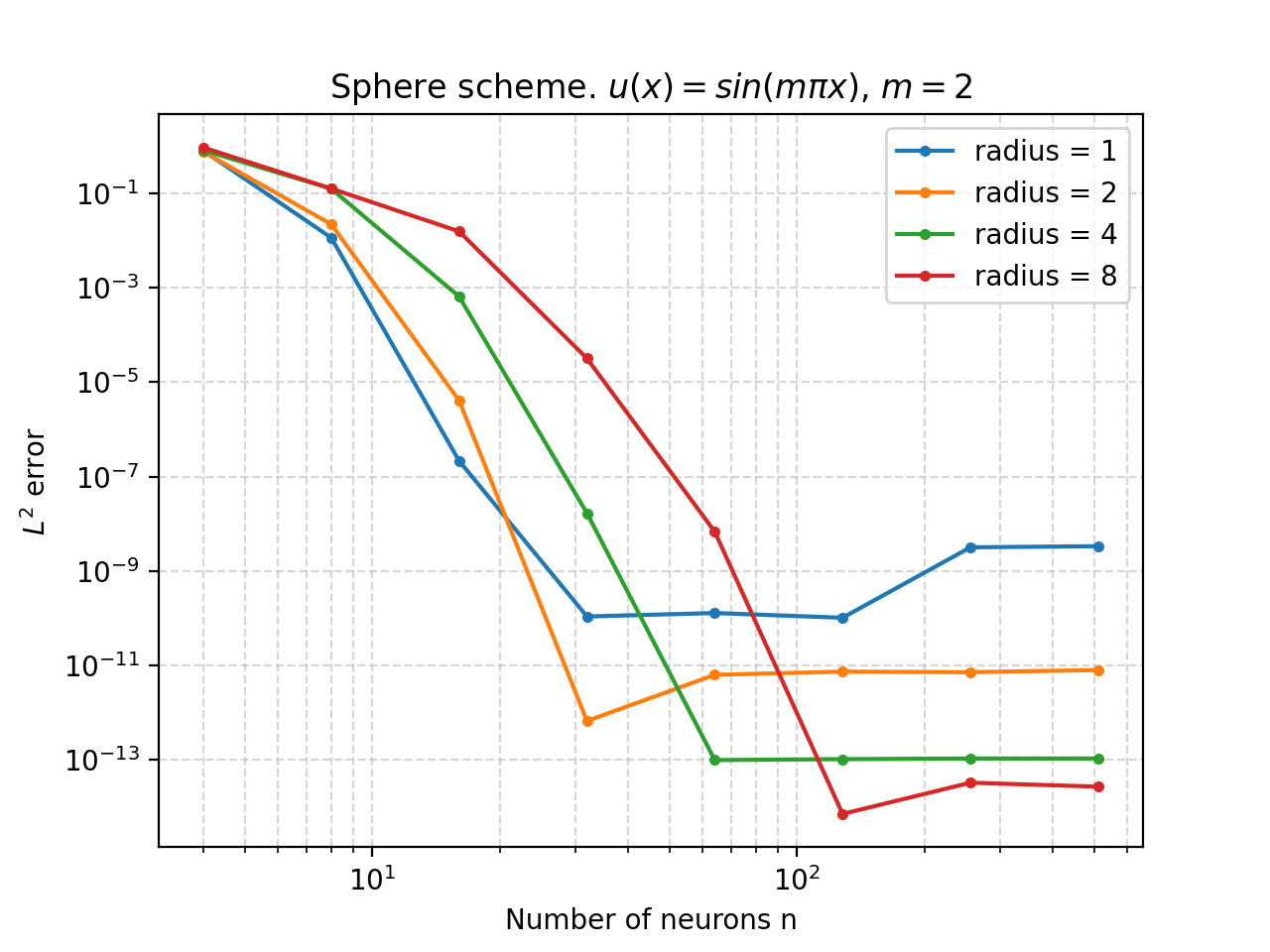}
    \includegraphics[width=0.325\linewidth]{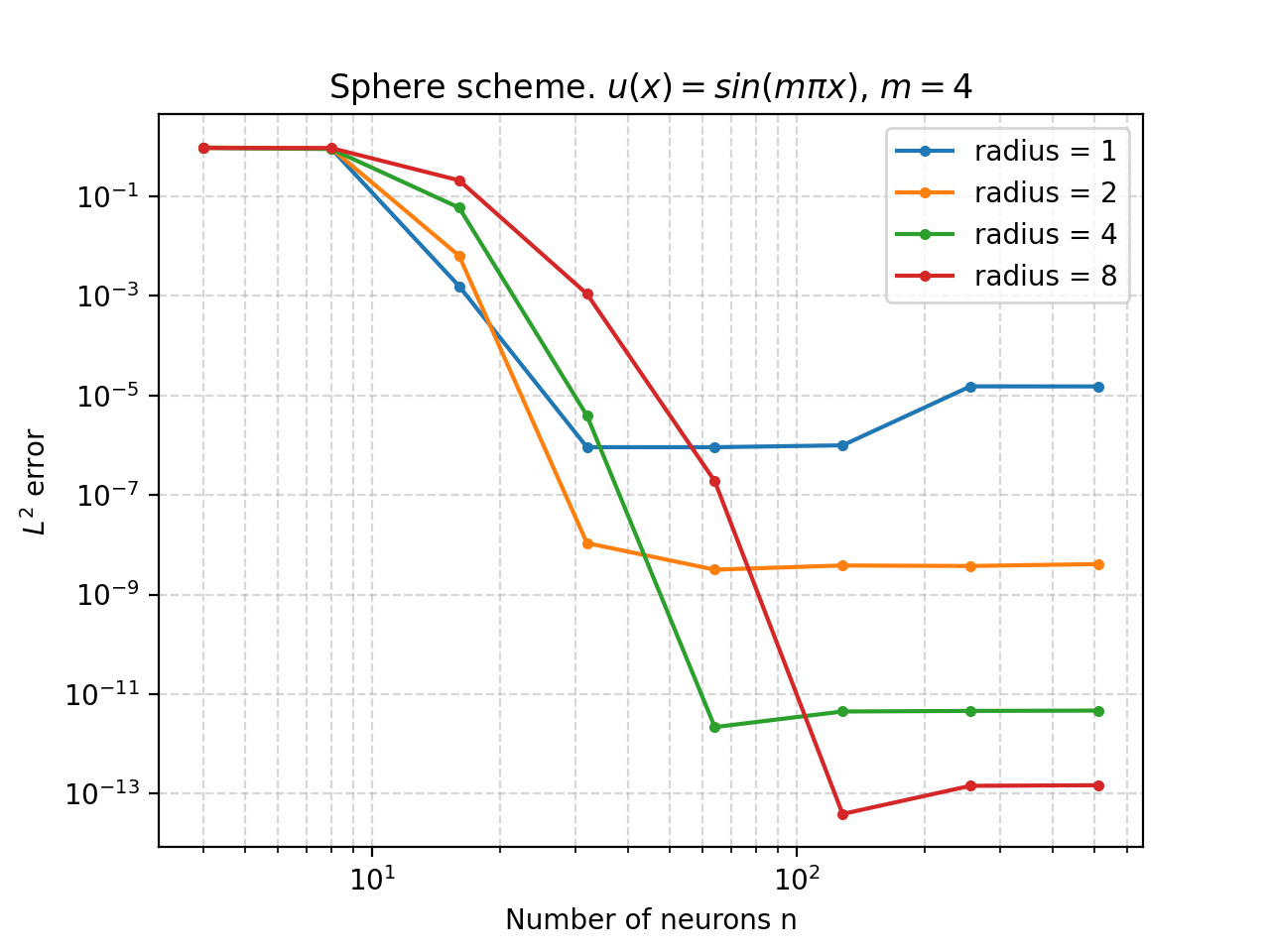}
    \caption{1D $L^2$ minimization with $\tanh$ activations. Collocation formulation. 
    Target function $u(x) = \sin(m\pi x)$ for $m=1,2,4$. }
    \label{fig:1dl2-tanh-col-sphere}
\end{figure}

\begin{figure}[H]
    \centering
    \includegraphics[width=0.325\linewidth]{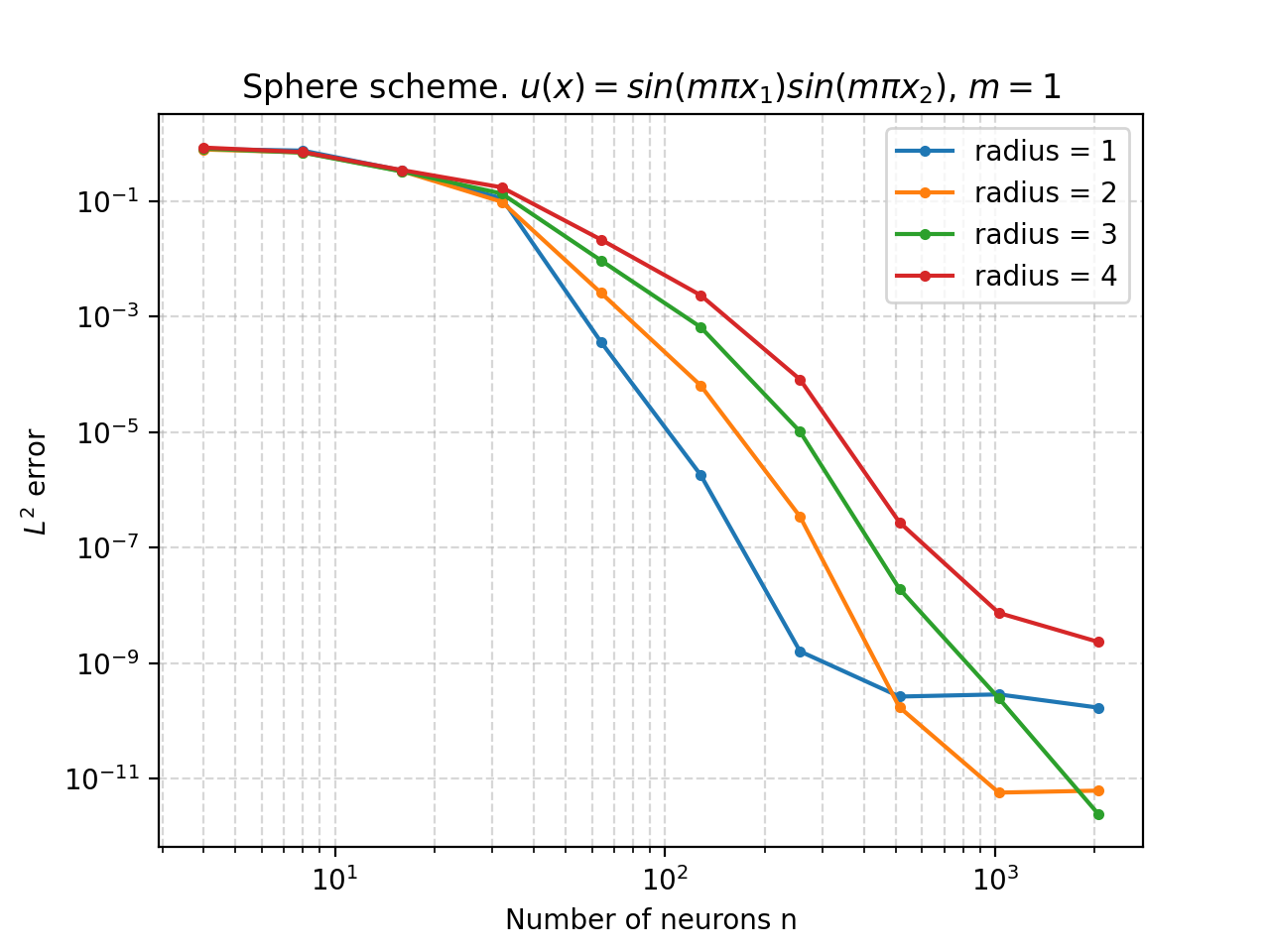}
    \includegraphics[width=0.325\linewidth]{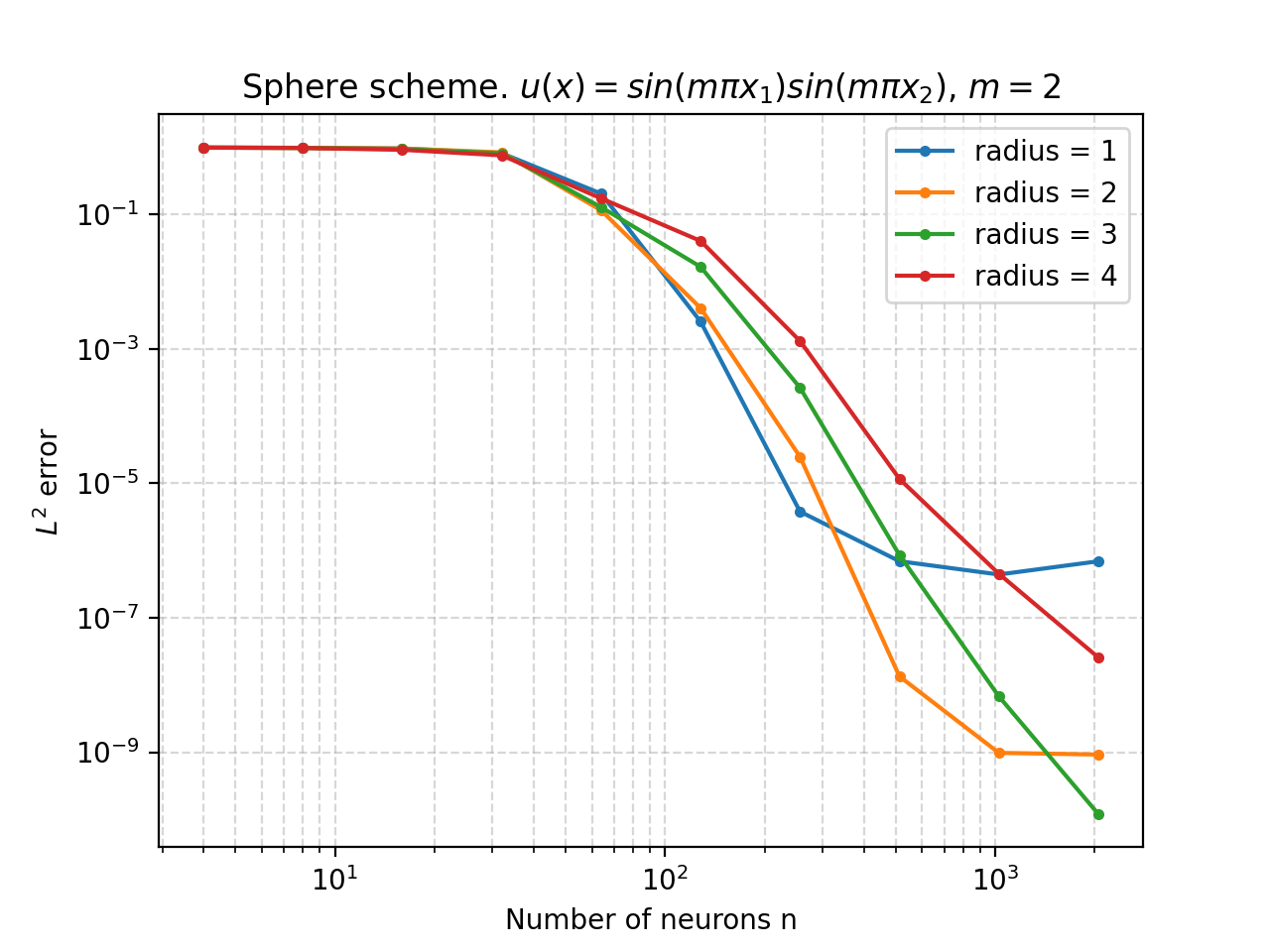}
    \includegraphics[width=0.325\linewidth]{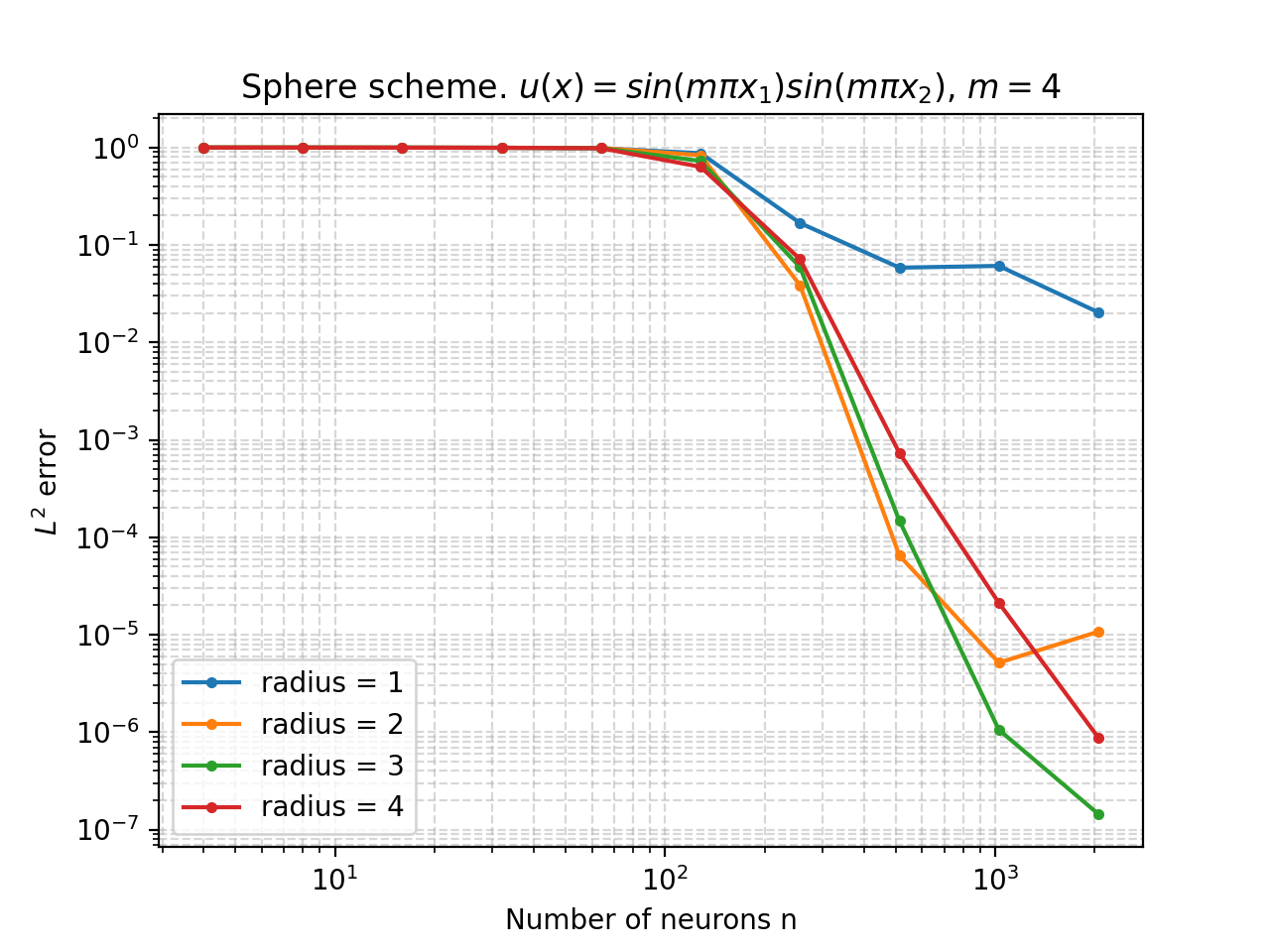}
    \caption{2D $L^2$ minimization with $\tanh$ activations. Collocation formulation. 
    Target function $u(x) = \sin(m\pi x_1)\sin(m\pi x_2)$ for $m=1,2,4$. }
    \label{fig:2dl2-tanh-col-sphere}
\end{figure}

\textbf{Second-order elliptic PDEs}
In the following, we solve a second-order elliptic PDE using linearized shallow neural network with $\tanh$ activation under the collocation formulation as described in Section~\ref{sec:methods}. 
The nonlinear layer parameters are fixed a priori using the
proposed sphere scheme, while the linear output layer is determined by solving a least-squares problem.

We first consider a one-dimensional second-order elliptic problem \eqref{eq:Neumann}
with Dirichlet boundary conditions. 
In one dimension, we choose the exact solution
\begin{equation}
\label{eq:1d-pde-sln}
u(x) = \sum_{i=1}^{3} \sin(m_i \pi x),
\quad m_1 = 1,\; m_2 = 2,\; m_3 = 4.
\end{equation}
The corresponding right-hand-side function is computed analytically from the PDE.
The collocation points are chosen as a uniform grid of 200 points on $[-1,1]$, including
the boundary points.

We initialize the nonlinear layer parameters of the $\tanh$ neural network using the
sphere scheme and test a range of different sphere radii. The approximation errors are evaluated using an accurate piecewise Gaussian quadrature rule, namely, the interval $[-1,1]$
is divided into 1024 uniform subintervals, and a Gauss quadrature rule of order 3 is applied on each subinterval.

We report both the $L^2$ error and the $H^1$ seminorm error, plotted against the number
of neurons in Figure~\ref{fig:1dNeumann_tanh_sphere}. As the number of neurons increases,
the method exhibits extremely fast convergence, with errors rapidly decreasing to the
order of $10^{-13}$, indicating a highly accurate numerical solution.

\begin{figure}[H]
    \centering
    \includegraphics[width=0.8\linewidth]{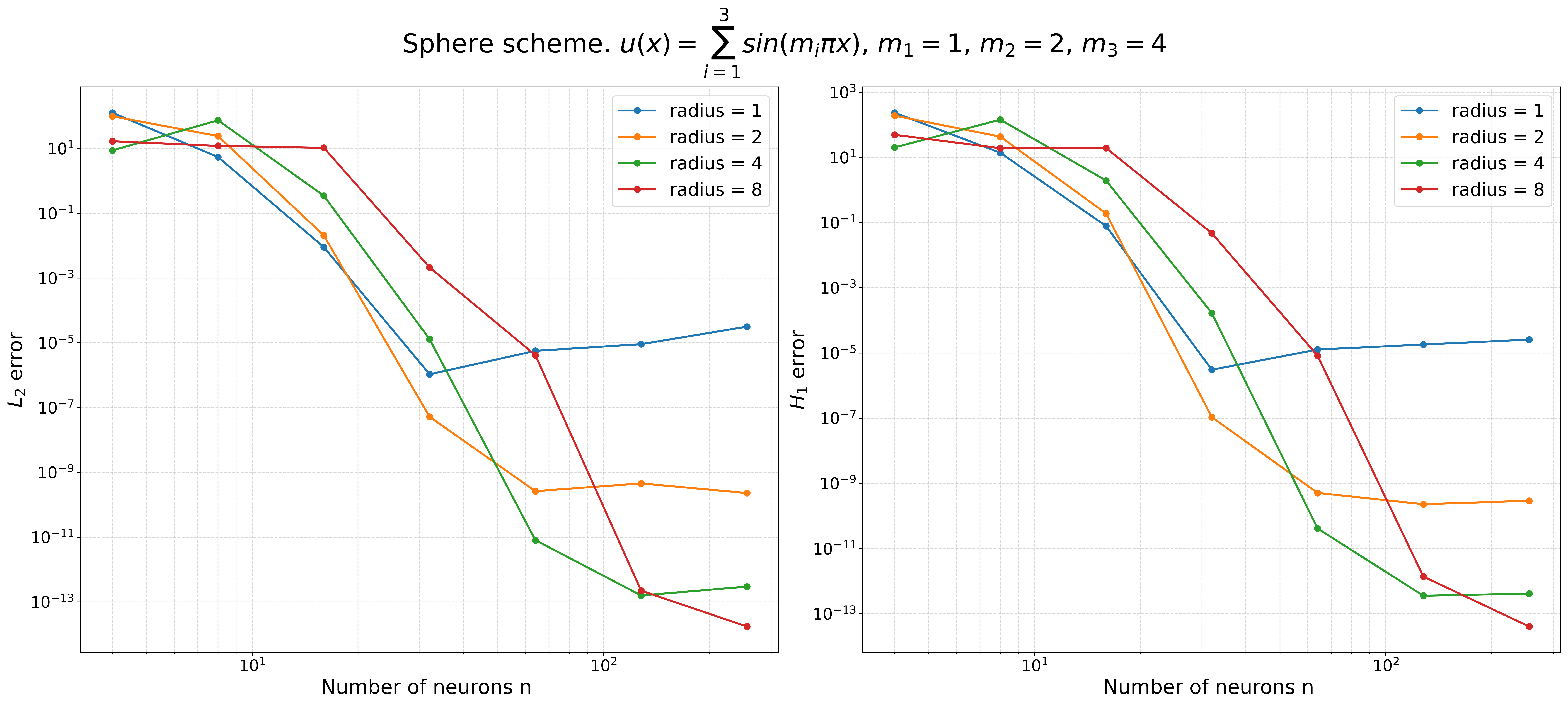}
    \caption{Numerical results for the one-dimensional second-order elliptic problem with exact solution \eqref{eq:1d-pde-sln}, solved using a collocation formulation with $\tanh$ activation. 
The nonlinear layer parameters are initialized by the proposed sphere scheme with different sphere radii, and the resulting $L^2$ and $H^1$ seminorm errors are shown.}
    \label{fig:1dNeumann_tanh_sphere}
\end{figure}

Next, we consider a two-dimensional second-order elliptic problem \eqref{eq:Neumann} with Dirichlet boundary conditions, solved using a linearized shallow neural network
with $\tanh$ activation. 
As in the one-dimensional case, the nonlinear layer parameters are fixed in advance using the proposed sphere scheme, and the linear output layer is
computed by solving a least-squares problem.

In two dimensions, we choose the exact solution
\begin{equation}
\label{eq:2d-pde-sln}
u(\mathbf{x}) = \sum_{i=1}^{3} \sin(m_i \pi x_1)\,\sin(m_i \pi x_2),
\end{equation}
with $m_1 = 1$, $m_2 = 2$, and $m_3 = 4$. 
The corresponding right-hand-side function is computed analytically from the PDE. 
The collocation points are chosen as a uniform $100\times100$ grid on the square domain, with boundary points included.

We initialize the nonlinear layer parameters of the $\tanh$ neural network using the sphere scheme and test a range of different sphere radii.
The approximation errors are evaluated using an accurate piecewise Gaussian quadrature rule: the square domain is partitioned into $50^2$ uniform subdomains, and a Gauss quadrature rule of order 5 is applied on each subdomain.

We report both the $L^2$ error and the $H^1$ seminorm error, plotted against the number of neurons in Figure~\ref{fig:2dNeumann_tanh_sphere}. As the number of neurons increases, the method continues to exhibit rapid convergence. In particular, the $L^2$ error reaches the order of $10^{-9}$, while the $H^1$ seminorm error reaches the order of $10^{-7}$, demonstrating the effectiveness of the proposed sphere scheme in two dimensions.

\begin{figure}[H]
    \centering
    \includegraphics[width=0.8\linewidth]{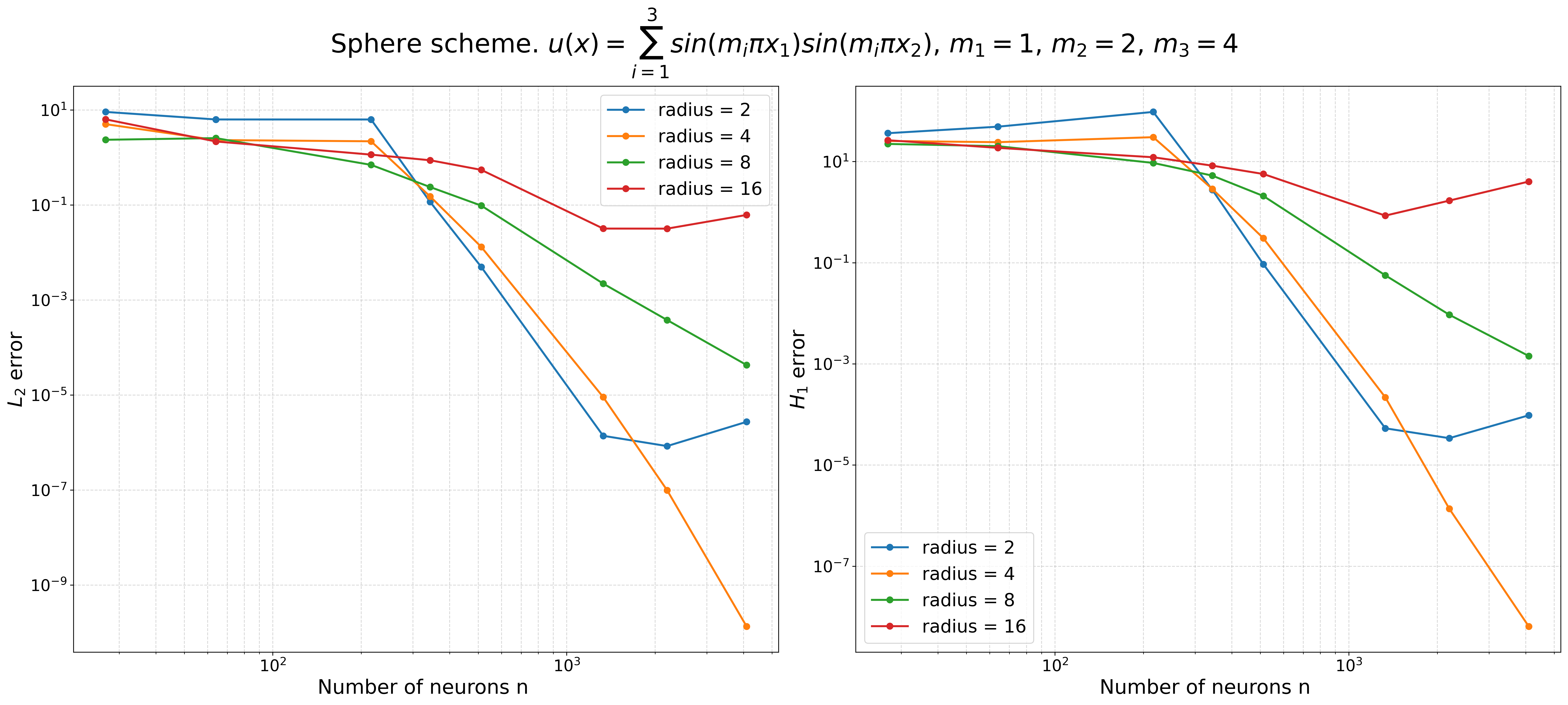}
    \caption{2D Neumann problem solved by a collocation method using $\tanh$ activation.
    The neurons are predetermined using the proposed sphere scheme with varying sphere radii.
    The corresponding errors are reported.}
    \label{fig:2dNeumann_tanh_sphere}
\end{figure}



\section{Conclusion}\label{sec:conclusion}

In this paper, we have presented a systematic numerical study of linearized shallow neural networks for the approximation of high-dimensional functions and the 
solution of elliptic PDEs. 
They are shallow neural networks with deterministically fixed hidden-layer parameters. 
Our results demonstrate that random sampling of the hidden-layer parameters, commonly used in RFM and ELM, is not necessary for achieving high accuracy. 
By employing quasi-uniform and quasi–Monte Carlo points to determine the hidden-layer features of ReLU$^k$ networks, we numerically demonstrate that the linearized ReLU$^k$ shallow neural networks achieve the optimal approximation rate.
We also propose two deterministic parameter schemes for linearized $\tanh$ shallow neural networks and achieve superior accuracy in function approximation and elliptic PDE problems. 

Furthermore, we carry out a numerical study of the variational and collocation 
formulations for linearized neural networks. 
The variational approach enjoys a clean theoretical foundation as a 
Galerkin projection, yet our experiments show that it suffers from severe 
ill-conditioning as the number of neurons increases. 
In contrast, the collocation 
formulation, implemented in a least–squares setting, is shown to be more numerically stable when solved with a robust least-squares solver, though its error analysis remains less complete. 
Across both formulations, conditioning emerges as a critical bottleneck: once the 
condition numbers of the associated Gram or stiffness matrices grow too large, further gains in accuracy become impossible, regardless of approximation capacity.


Overall, this work demonstrates both the promise and the limitations of linearized neural networks for high-dimensional function approximation problems and PDEs. 
The numerical evidence points clearly to conditioning as a major barrier to scalability in variational methods, while also showing that collocation can yield accurate results despite a lack of complete error analysis. 
Future research should therefore focus on developing effective preconditioning strategies and establishing rigorous error analysis for collocation-based approaches, so that linearized shallow networks can be used more reliably for challenging 
scientific computing problems.

\appendix
\section{Supplementary experiments on $L^2$-minimization} \label{app:supp-l2}
We report additional numerical results for ReLU activations to complement the ReLU$^k$ experiments presented in the main text. The experimental setup and evaluation criteria are the same as those in Section~\ref{subsec:relul2}, and the results exhibit similar error decay behavior across dimensions $d=1$--$6$.

\subsection{Continuous $L^2$-minimization}
\begin{figure}[H]
    \centering
    \includegraphics[width=0.325\linewidth]{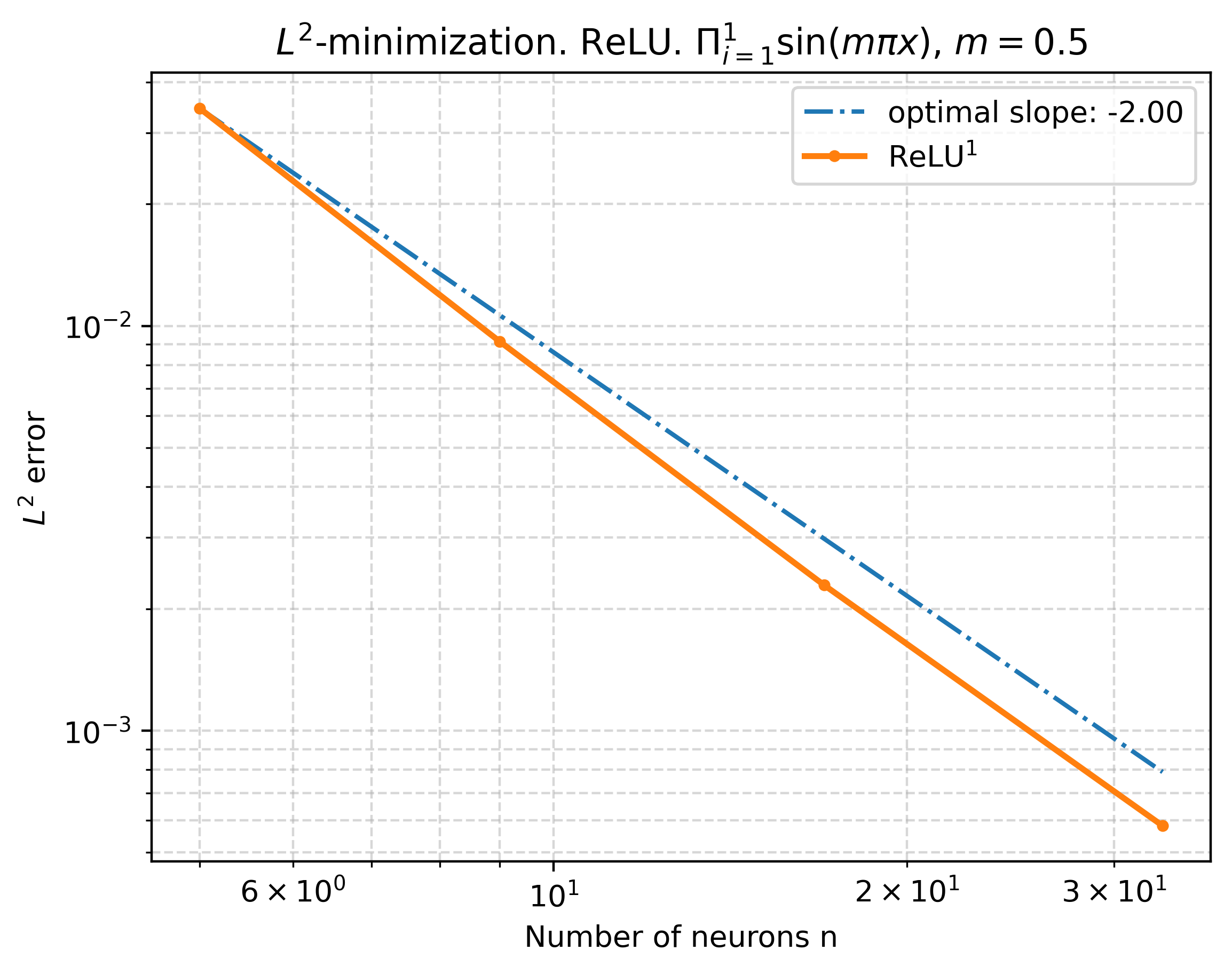}
    \includegraphics[width=0.325\linewidth]{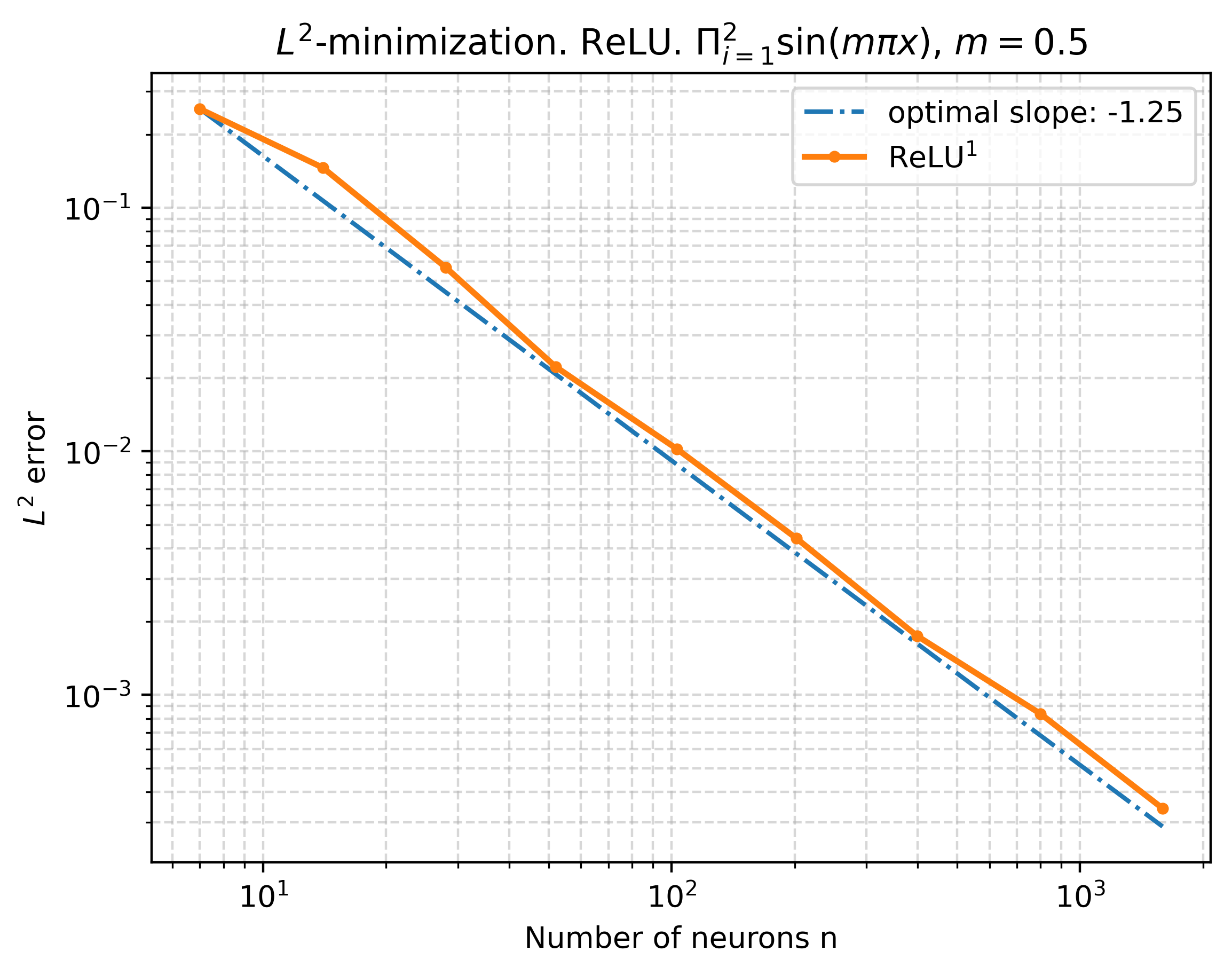}
    \includegraphics[width=0.325\linewidth]{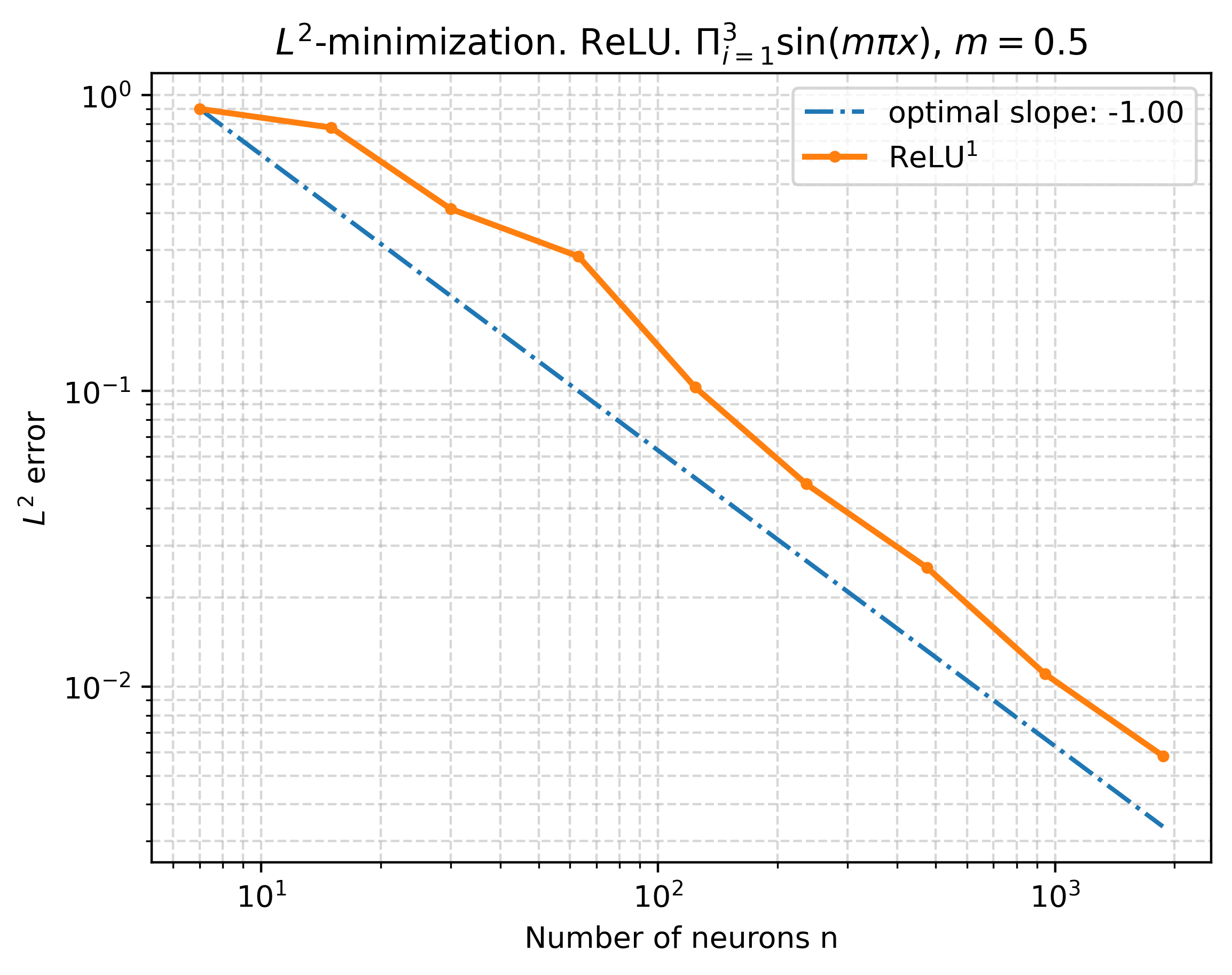} \\
    \includegraphics[width=0.325\linewidth]{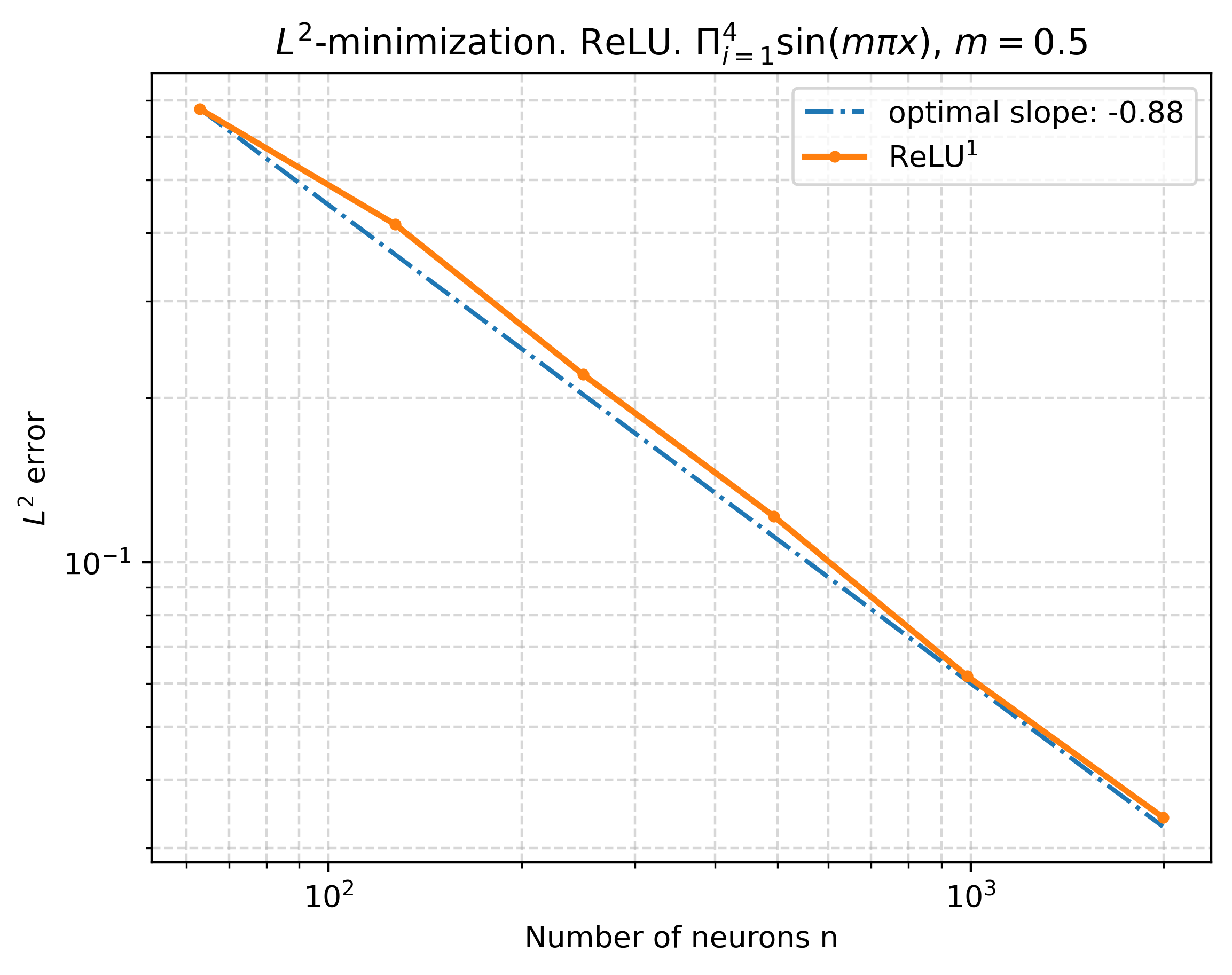}
    \includegraphics[width=0.325\linewidth]{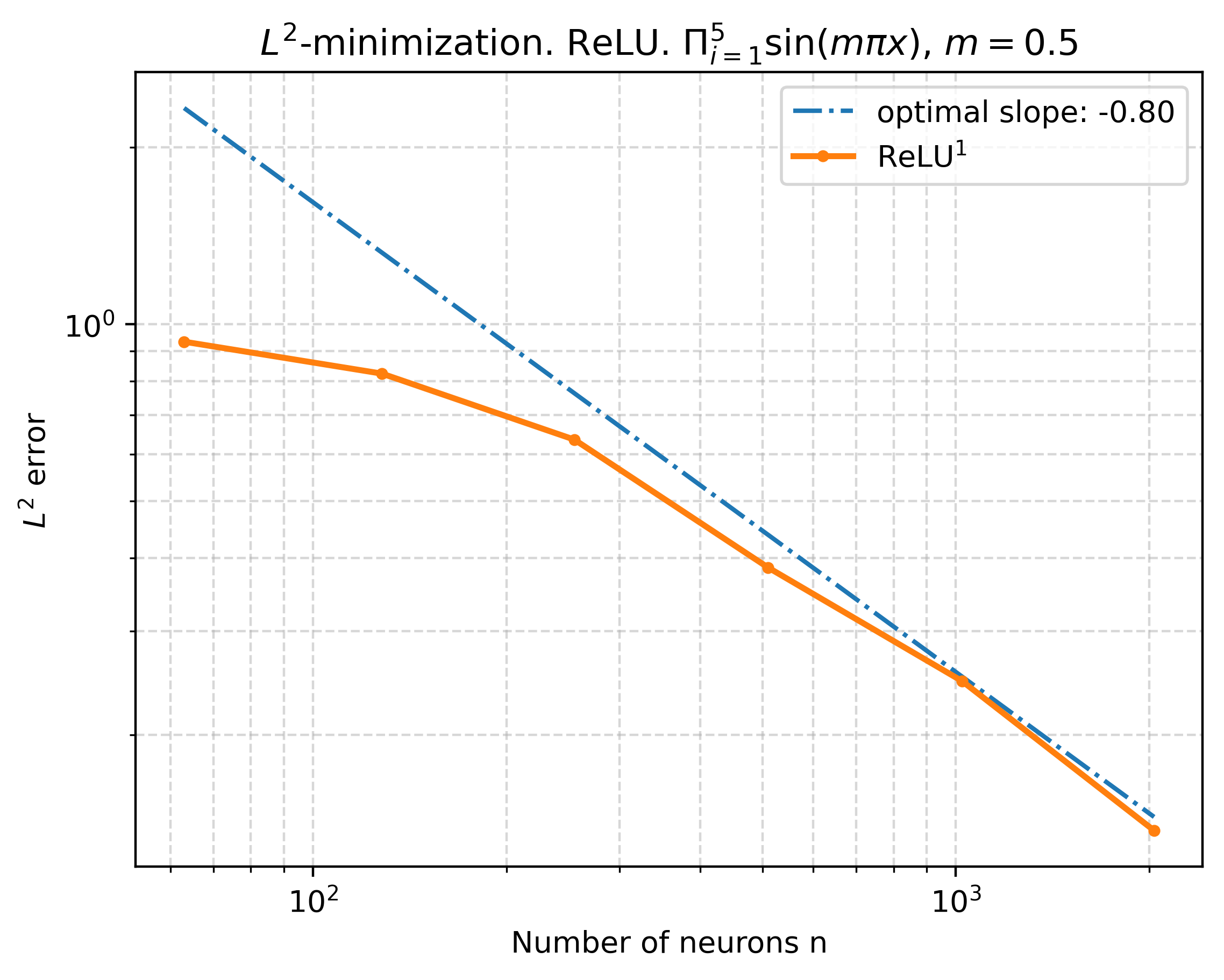}
    \includegraphics[width=0.325\linewidth]{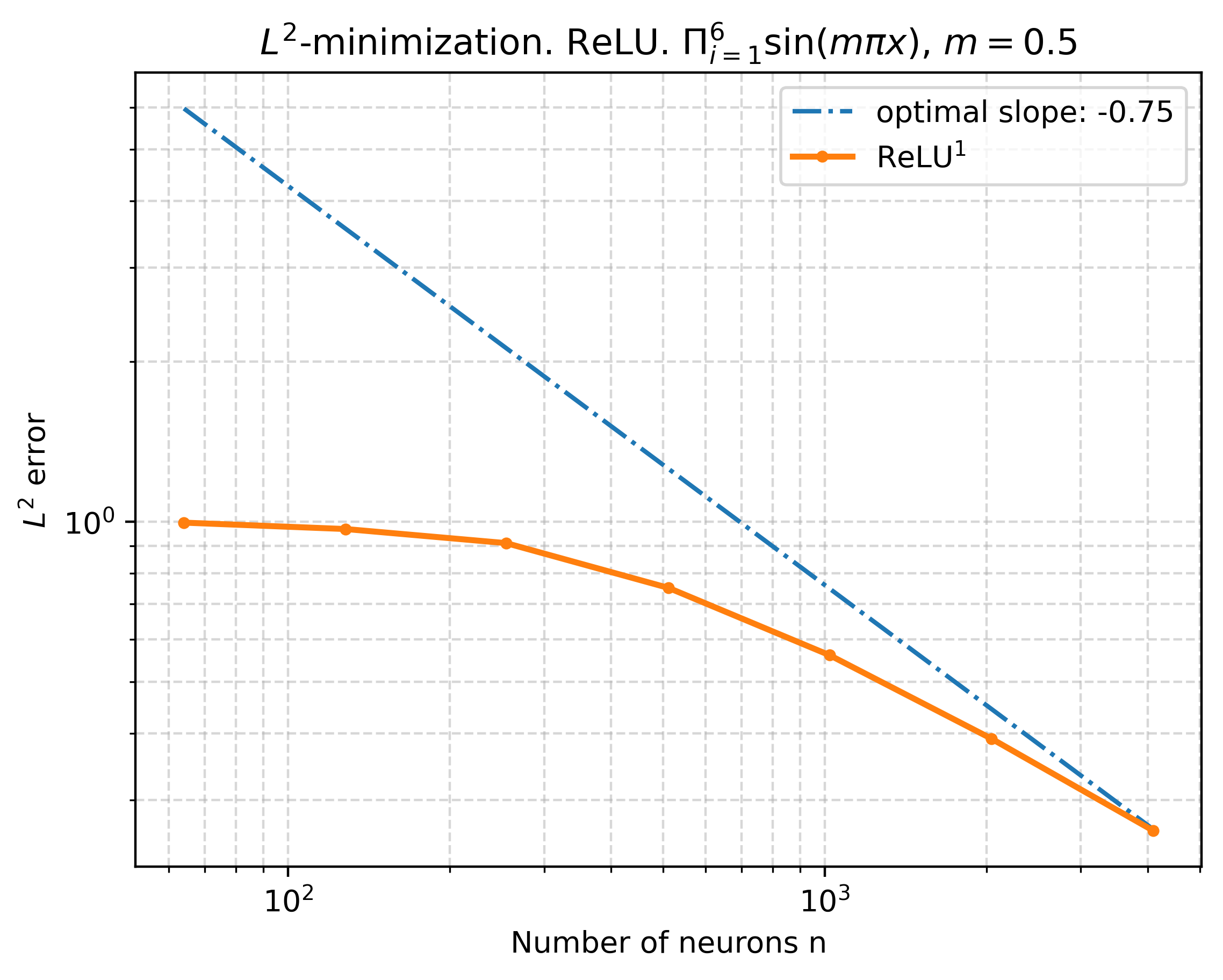} 
    \caption{Error decay for continuous $L^2$-minimization (variational formulation) 
    in dimensions $d=1$--$6$.}
    \label{fig:L2-variational-relu}
\end{figure}

\subsection{Discrete $\ell^2$-regression}

\begin{figure}[H]
    \centering
    \includegraphics[width=0.325\linewidth]{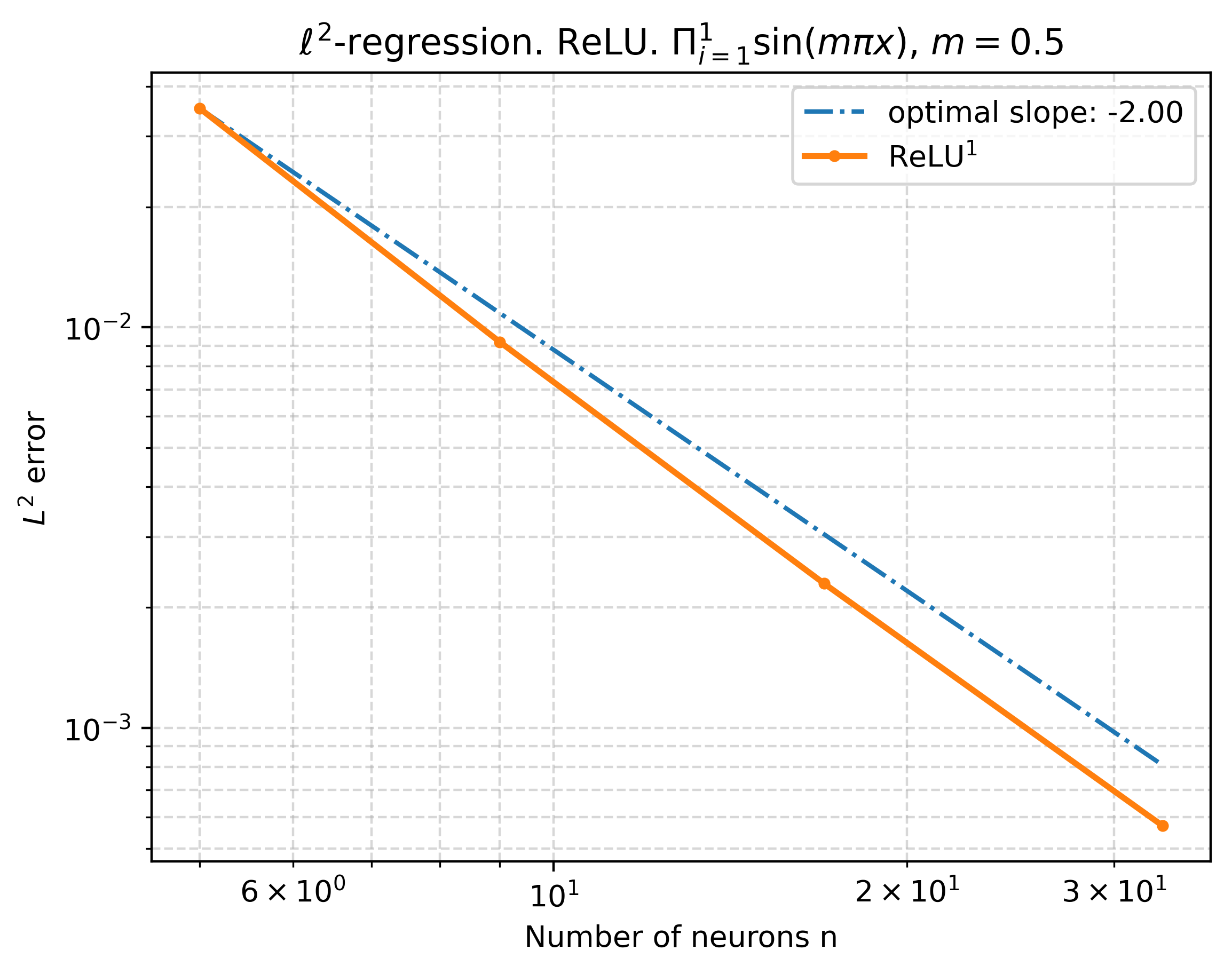}
    \includegraphics[width=0.325\linewidth]{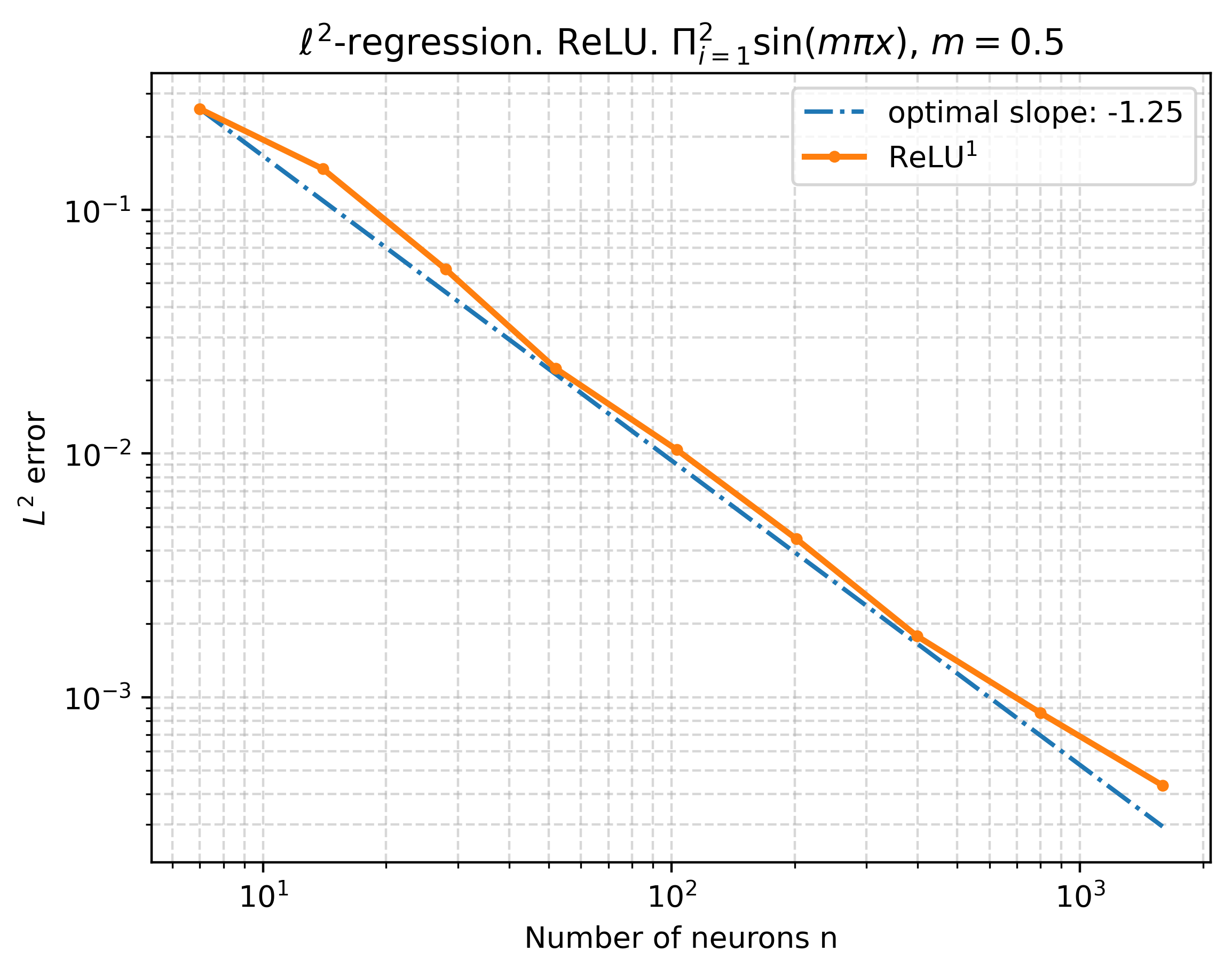}
    \includegraphics[width=0.325\linewidth]{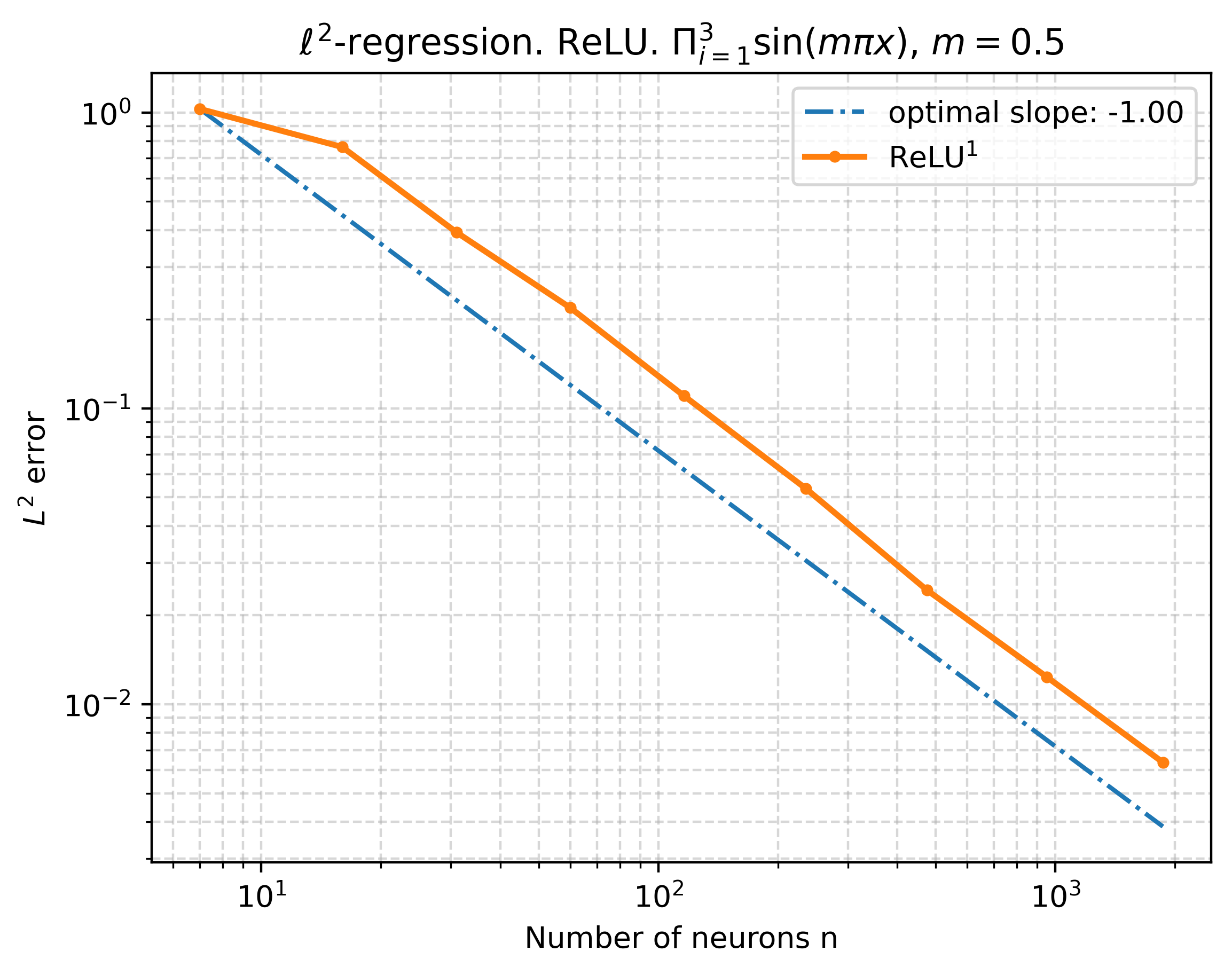} \\
    \includegraphics[width=0.325\linewidth]{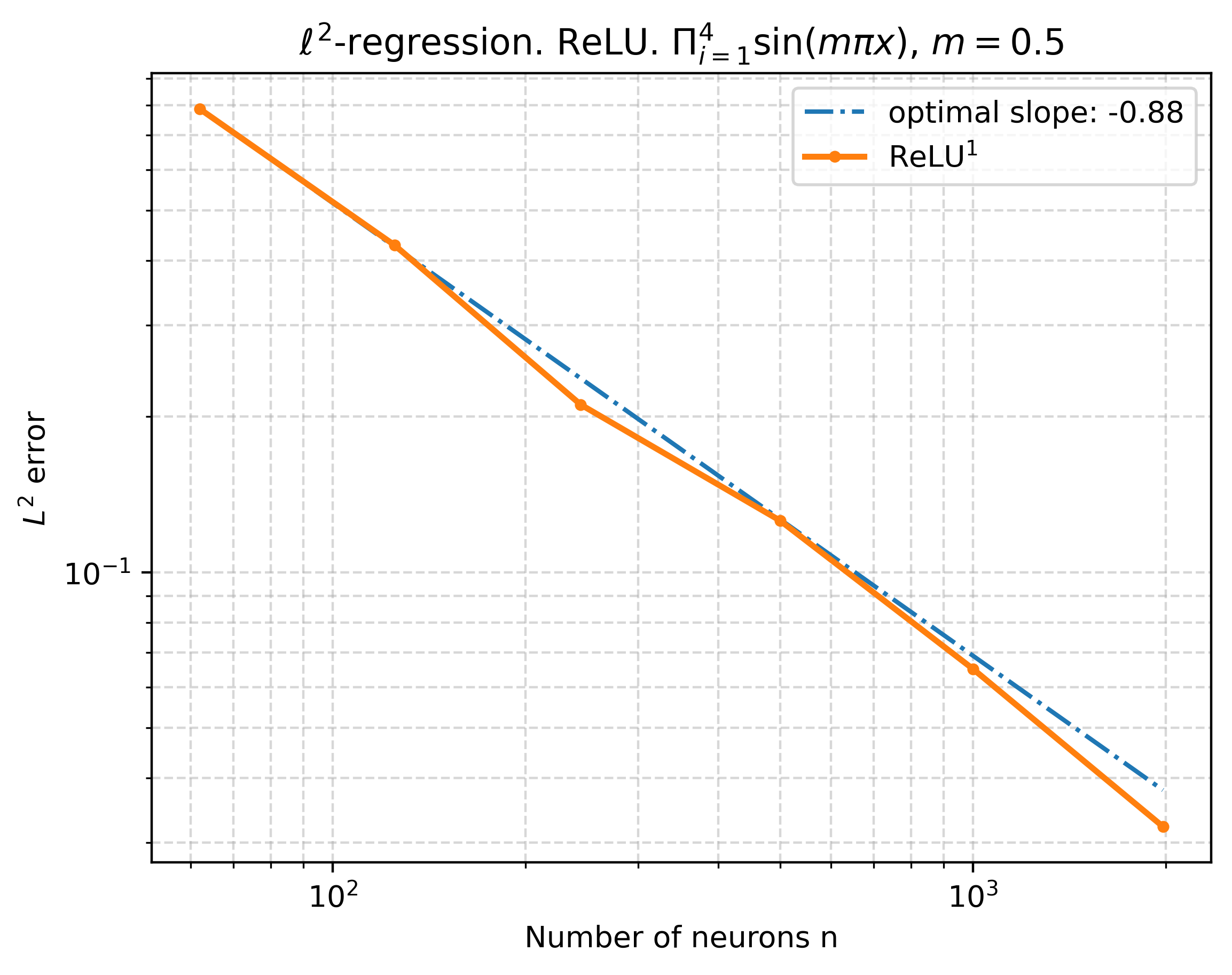}
    \includegraphics[width=0.325\linewidth]{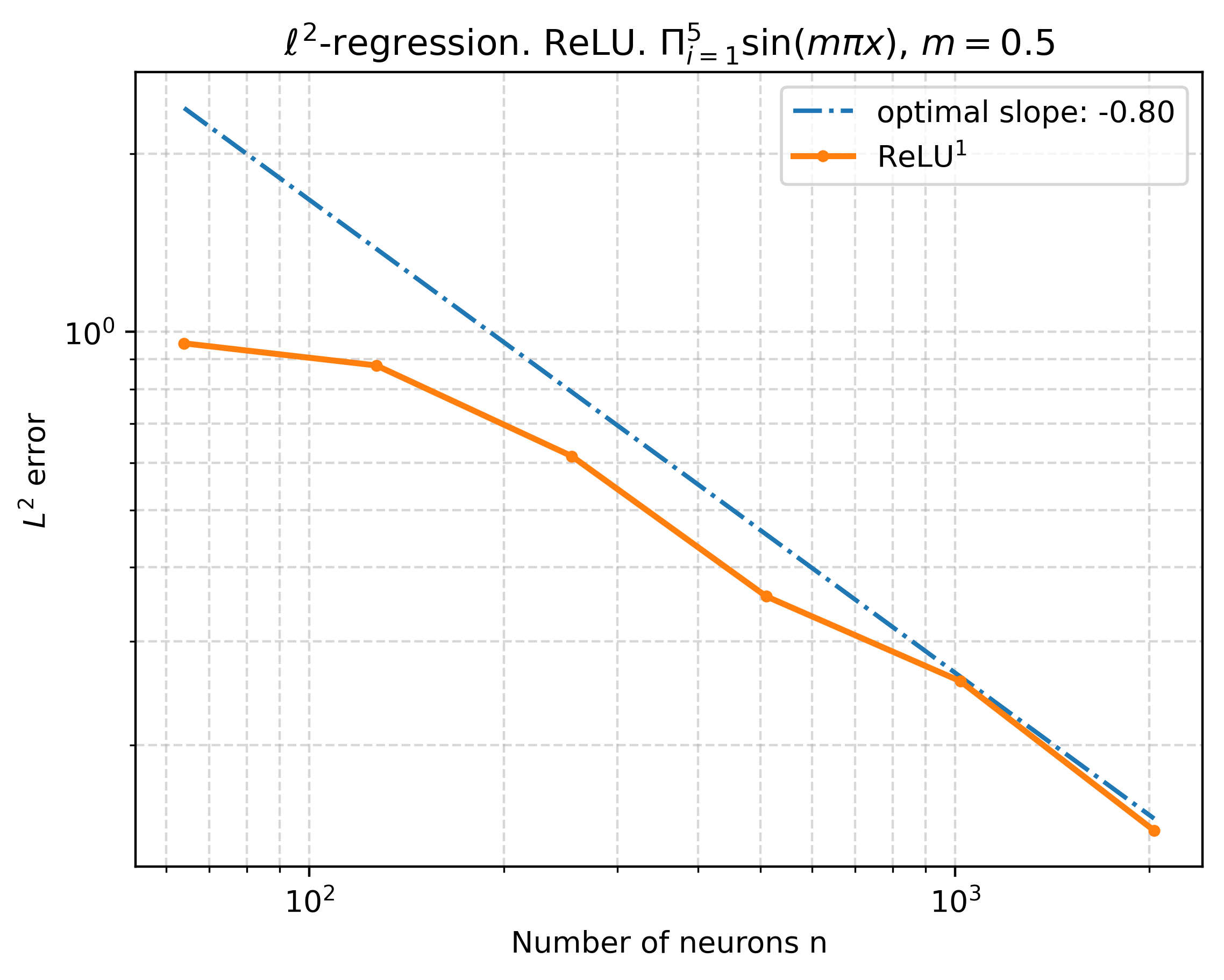}
    \includegraphics[width=0.325\linewidth]{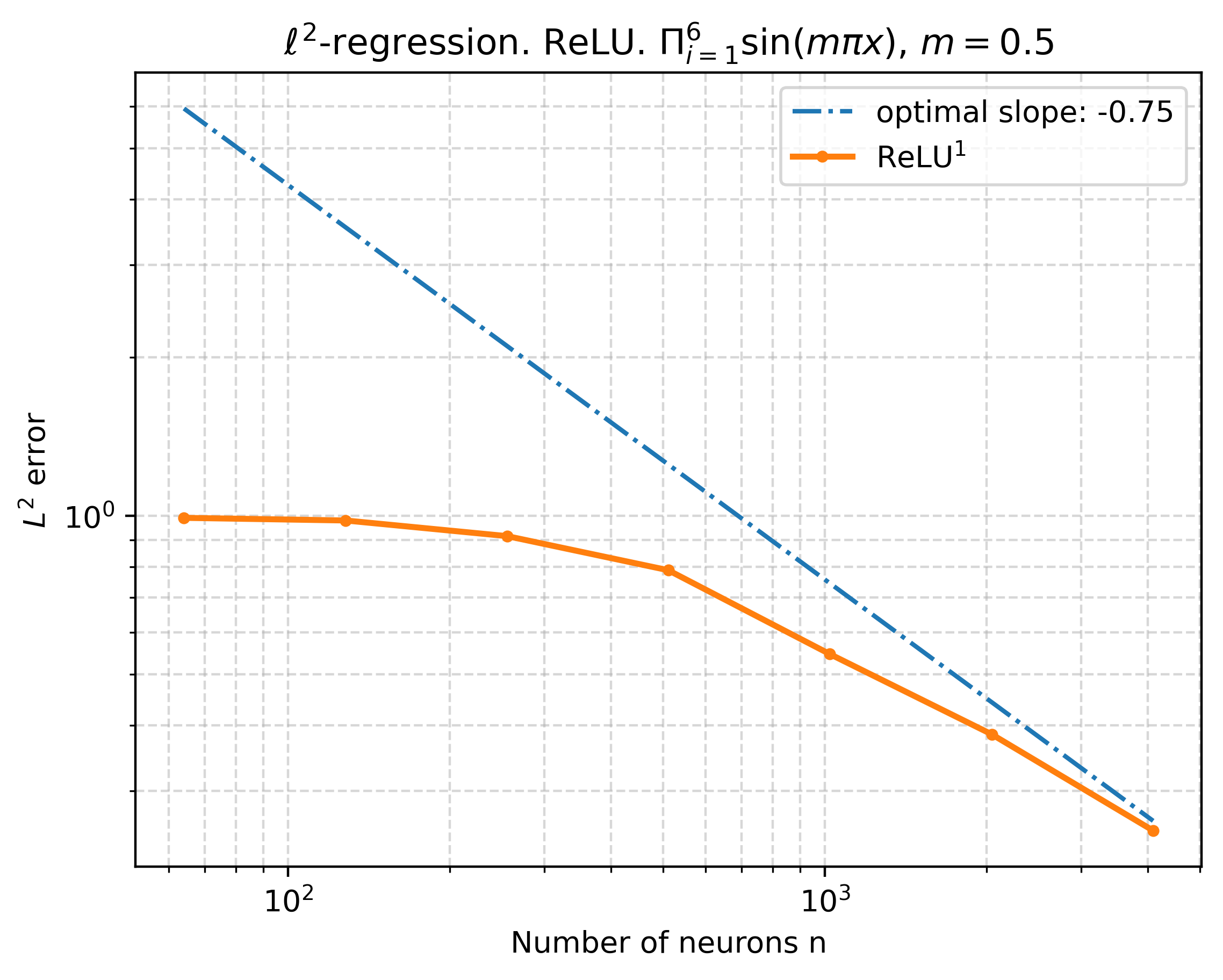} 
    \caption{Error decay for discrete $\ell^2$-regression (collocation formulation) 
    in dimensions $d=1$--$6$.}
    \label{fig:L2-regression-relu}
\end{figure}

\section{Variational least-squares formulation}
\label{app:least-square}

Now we consider the variational least-squares formulation for the $L^2$-minimization task.
Let $u$ be the target function, and let $\{ \phi_j \}_{j = 1}^n$ be the pre-determined neurons. 
We seek to solve the $L^2$-minimization problem on the subspace $\text{span}\{  \phi_j \}_{j = 1}^n$. 
Given a quadrature rule, $\int_{\Omega} f(x) dx \approx \sum_{i =1}^N w_i f(x_i)$, where $w_i$, $x_i$ are the quadrature weights and quadrature points, respectively,  the corresponding linear system for the $L^2$-minimization is
\begin{equation}
    M \mathbf{a} = \mathbf{b}, 
\end{equation}
where $M_{ij}= \sum_{k = 1}^N w_k \phi_j(x_k)  \phi_i(x_k)$, $\mathbf{b}_i = \sum_{k = 1}^N w_k u(x_k) $.
In a more compact matrix form, we have 
\begin{equation}
    M = \Phi^T W \Phi, \mathbf{b} = \Phi^T W \mathbf{u},  
\end{equation}
where
\begin{itemize}
    \item $\Phi \in \mathbb{R}^{N\times n}$ is the matrix of basis functions evaluated at all the quadrature points;
    \item $W = \text{diag}(w_1,\dots,w_N)$
    \item $\mathbf{u} = (u(x_1), \dots, u(x_N)) $ is the function values of the target function at all the quadrature points. 
\end{itemize} 
The above matrix form also has an equivalent minimization formulation given by 
\begin{equation}
    \min_{\mathbf{a} \in \mathbb{R}^n} \ \| \Phi \mathbf{a} - \mathbf{u} \|_{W}^2,
\end{equation}
where $\| \cdot\|_W$ denotes the norm induced by the weighted inner product $\langle u,v \rangle_W := \sum_{i = 1}^N w_i u_i v_i$ on $\mathbb{R}^N$. 
The following theorem establishes the equivalence.
\begin{theorem}
Any minimizer $\mathbf{a}$ of the weighted least-squares problem
\begin{equation}\label{eq:ls}
    \min_{\mathbf{a} \in \mathbb{R}^n} \ \| \Phi \mathbf{a} - \mathbf{u} \|_{W}^2
\end{equation}
satisfies the \emph{normal equations}
\begin{equation}\label{eq:ne}
    \Phi^\top W \Phi \, \mathbf{a} \;=\; \Phi^\top W \mathbf{u} .
\end{equation}
Conversely, any solution of \eqref{eq:ne} is a stationary point of \eqref{eq:ls}. 
If $W \succ 0$ and $\Phi$ has full column rank, the minimizer of \eqref{eq:ls} is a unique global minimizer and is given by
\begin{equation}\label{eq:sol}
    \mathbf{a} \;=\; (\Phi^\top W \Phi)^{-1} \Phi^\top W \mathbf{u} .
\end{equation}
\end{theorem}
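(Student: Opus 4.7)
The plan is to reduce everything to a first-order optimality condition on a smooth convex quadratic, and then sharpen to strict convexity under the stated hypotheses. Define
$$J(\mathbf{a}) := \|\Phi\mathbf{a} - \mathbf{u}\|_W^2 = \mathbf{a}^\top \Phi^\top W \Phi\, \mathbf{a} - 2\,\mathbf{a}^\top \Phi^\top W \mathbf{u} + \mathbf{u}^\top W \mathbf{u},$$
where symmetry of the diagonal matrix $W$ is used to merge the cross terms. Since $J$ is a smooth quadratic on $\mathbb{R}^n$, its gradient is $\nabla J(\mathbf{a}) = 2\Phi^\top W \Phi\, \mathbf{a} - 2\Phi^\top W \mathbf{u}$, and setting $\nabla J(\mathbf{a}) = 0$ yields precisely the normal equations \eqref{eq:ne}. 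This gives both directions of the equivalence between minimizers/stationary points of \eqref{eq:ls} and solutions of \eqref{eq:ne}.

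The next step is to show that stationarity suffices for global minimality, which amounts to convexity. The Hessian $\nabla^2 J = 2\Phi^\top W\Phi$ satisfies
$$\mathbf{v}^\top \Phi^\top W \Phi\, \mathbf{v} \;=\; \|W^{1/2}\Phi \mathbf{v}\|_2^2 \;\ge\; 0, \qquad \forall\, \mathbf{v}\in\mathbb{R}^n,$$
using that the diagonal $W \succeq 0$ admits a positive semidefinite square root. Hence $J$ is convex, so any solution of the normal equations is a global minimizer of \eqref{eq:ls}, and conversely.

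Finally, I would upgrade to strict convexity and uniqueness under the additional hypotheses $W \succ 0$ and $\mathrm{rank}(\Phi) = n$. In that case $W^{1/2}$ is invertible, so $\|W^{1/2}\Phi\mathbf{v}\|_2 = 0$ forces $\Phi\mathbf{v} = 0$, and full column rank of $\Phi$ then forces $\mathbf{v} = 0$. Thus $\Phi^\top W\Phi$ is symmetric positive definite, in particular invertible, and \eqref{eq:ne} has the unique solution \eqref{eq:sol}, which by strict convexity is the unique global minimizer of \eqref{eq:ls}. I do not anticipate a serious obstacle; the argument is standard linear algebra. The only step that requires even a brief justification is the equivalence between positive definiteness of $\Phi^\top W\Phi$ and the pair of conditions $W \succ 0$ and $\mathrm{rank}(\Phi) = n$, which is handled by the $W^{1/2}$-factorization above.
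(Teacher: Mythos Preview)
Your proposal is correct and follows essentially the same approach as the paper: expand the quadratic, compute the gradient to obtain the normal equations as the first-order optimality condition, use positive semidefiniteness of the Hessian $2\Phi^\top W\Phi$ for convexity, and upgrade to strict convexity and uniqueness under $W\succ 0$ and full column rank of $\Phi$. If anything, your justification of positive (semi)definiteness via the $W^{1/2}$-factorization is slightly more careful than the paper's, which simply asserts $\nabla^2 L\succ 0$ before invoking the rank hypothesis.
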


\begin{proof}
Assume $W\in\mathbb{R}^{N\times N}$ is symmetric positive definite and let
\[
L(\mathbf{a})\;:=\;\|\Phi \mathbf{a}-\mathbf{u}\|_W^2=(\Phi \mathbf{a}-\mathbf{u})^\top W(\Phi \mathbf{a}-\mathbf{u}).
\]
Expanding and using $W^\top=W$,
\[
L(\mathbf{a})=\mathbf{a}^\top\Phi^\top W\Phi\,\mathbf{a}-2\,\mathbf{u}^\top W\Phi\,\mathbf{a}+\mathbf{u}^\top W\mathbf{u} .
\]
The gradient of $L$ is 
\[
\nabla L(\mathbf{a})=2\,\Phi^\top W(\Phi \mathbf{a}-\mathbf{u}).
\]
Hence any minimizer $\mathbf{a}$ of \eqref{eq:ls} must satisfy the first-order optimality
condition $\nabla L(\mathbf{a})=0$, i.e.
\[
\Phi^\top W\Phi\,\mathbf{a}=\Phi^\top W\mathbf{u},
\]
which are exactly the normal equations \eqref{eq:ne}.

Conversely, $L$ is convex because its Hessian is
\[
\nabla^2 L(\mathbf{a})=2\,\Phi^\top W\Phi\succ 0 .
\]
For a convex function, any stationary point is a global minimizer; therefore any
solution of \eqref{eq:ne} minimizes \eqref{eq:ls}. If, in addition, $W\succ 0$
and $\Phi$ has full column rank, then $\Phi^\top W\Phi\succ 0$, so $L$ is
strictly convex and the minimizer is unique and given by
\[
\mathbf{a}=(\Phi^\top W\Phi)^{-1}\Phi^\top W\mathbf{u},
\]
which is \eqref{eq:sol}.
\end{proof}

\begin{remark}[Numerical advantages of solving the least–squares form]
It is advantageous to solve the weighted least–squares problem
\[
\min_\mathbf{a} \|W^{1/2}(\Phi \mathbf{a}-\mathbf{u})\|_2
\]
directly, rather than forming and solving the normal equations
$(\Phi^\top W\Phi)\mathbf{a}=\Phi^\top W \mathbf{u}$. 
Forming the normal equations squares the singular values of
$W^{1/2}\Phi$, yielding
\[
\kappa_2(\Phi^\top W \Phi) = \kappa_2(W^{1/2}\Phi)^2.
\]
This squaring of the condition number not only amplifies numerical errors such as quadrature and roundoff errors, but also makes certain singular values too small to be reliably resolved numerically, leading to a more sensitive and less reliable linear system in practice.
\end{remark}
Finally, we plot the singular values of the least-squares system arising from the discrete $\ell^2$-regression with $\tanh$ shallow neural networks in
Figure~\ref{fig:svd-tanh}.
\begin{figure}[H]
    \centering
    \includegraphics[width=0.5\linewidth]{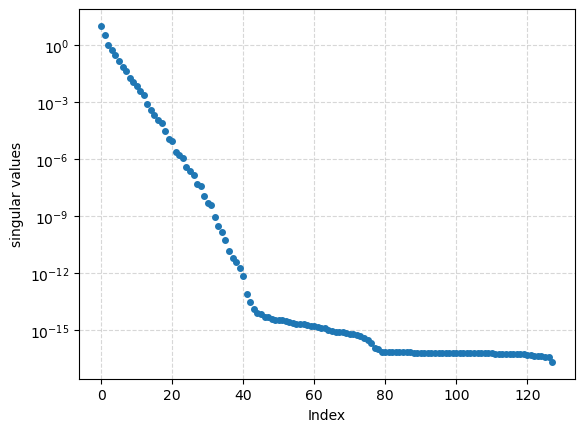}
    \caption{Singular values of the least-squares system for the discrete
    $\ell^2$-regression problem using a $\tanh$ shallow neural network with
    width $n = 128$, constructed via Petrushev's scheme. The rapid decay and subsequent flattening of the singular spectrum show that many singular values are extremely small, which explains the poor conditioning of the least-squares system and motivates avoiding the formation of normal equations.}
    \label{fig:svd-tanh}
\end{figure}

\nocite{siegel2023greedy, SX:2024, LS:2024, Temlyakov:2015greedyconvex}
\bibliography{reference} 

@article{siegel2023greedy,
  title={Greedy training algorithms for neural networks and applications to PDEs},
  author={Siegel, Jonathan W and Hong, Qingguo and Jin, Xianlin and Hao, Wenrui and Xu, Jinchao},
  journal={Journal of Computational Physics},
  volume={484},
  pages={112084},
  year={2023},
  publisher={Elsevier}
}

@article{SX:2022a,
  title={Optimal convergence rates for the orthogonal greedy algorithm},
  author={Siegel, Jonathan W. and Xu, Jinchao},
  journal={IEEE Trans. Inform. Theory},
  volume={68},
  number={5},
  pages={3354--3361},
  year={2022},
  publisher={IEEE}
}

@article{LS:2024,
  title={Entropy-based convergence rates of greedy algorithms},
  author={Li, Yuwen and Siegel, Jonathan W},
  journal={Math. Models Methods Appl. Sci.},
  volume={34},
  number={5},
  pages={779--802},
  year={2024},
  publisher={World Scientific}
}

@article{SHJHX:2023,
  title={Greedy training algorithms for neural networks and applications to {PDE}s},
  author={Siegel, Jonathan W and Hong, Qingguo and Jin, Xianlin and Hao, Wenrui and Xu, Jinchao},
  journal={J. Comput. Phys.},
  volume={484},
  pages={Paper No.~112084},
  year={2023},
  publisher={Elsevier}
}

@article{mhaskar1996neural,
  title={Neural networks for optimal approximation of smooth and analytic functions},
  author={Mhaskar, Hrushikesh N},
  journal={Neural computation},
  volume={8},
  number={1},
  pages={164--177},
  year={1996},
  publisher={MIT Press}
}

@article{siegel2020approximation,
  title={Approximation rates for neural networks with general activation functions},
  author={Siegel, Jonathan W and Xu, Jinchao},
  journal={Neural Networks},
  volume={128},
  pages={313--321},
  year={2020},
  publisher={Elsevier}
}

@article{barron1993universal,
  title={Universal approximation bounds for superpositions of a sigmoidal function},
  author={Barron, Andrew R},
  journal={IEEE Transactions on Information theory},
  volume={39},
  number={3},
  pages={930--945},
  year={1993},
  publisher={IEEE}
}

@article{LMX2025,
  title={Integral Representations of Sobolev Spaces via {R}e{LU}$^k$ Activation Function and Optimal Error Estimates for Linearized Networks},
  author={Liu, Xinliang and Mao, Tong and Xu, Jinchao},
  journal={arXiv preprint arXiv:2505.00351},
  year={2025}
}

@article{chen2022bridging,
  title={Bridging traditional and machine learning-based algorithms for solving PDEs: the random feature method},
  author={Chen, Jingrun and Chi, Xurong and Yang, Zhouwang and others},
  journal={J Mach Learn},
  volume={1},
  number={3},
  pages={268--298},
  year={2022}
}

@article{EMW:2020,
  title={A comparative analysis of optimization and generalization properties of two-layer neural network and random feature models under gradient descent dynamics},
  author={E, Weinan and Ma, Chao and Wu, Lei},
  journal={Sci. China Math.},
  volume={63},
  number={7},
  pages={1235--1258},
  year={2020},
  publisher={Springer}
}

@inproceedings{RR:2007,
 author = {Rahimi, Ali and Recht, Benjamin},
 booktitle = {Advances in Neural Information Processing Systems},
 pages = {},
 publisher = {Curran Associates, Inc.},
 title = {Random Features for Large-Scale Kernel Machines},
 volume = {20},
 year = {2007}
}

@article{raissi2019physics,
  title={Physics-informed neural networks: A deep learning framework for solving forward and inverse problems involving nonlinear partial differential equations},
  journal={Journal of Computational physics},
  volume={378},
  pages={686--707},
  year={2019},
  publisher={Elsevier}
}

@article{petrushev1998approximation,
  title={Approximation by ridge functions and neural networks},
  author={Petrushev, Pencho P},
  journal={SIAM Journal on Mathematical Analysis},
  volume={30},
  number={1},
  pages={155--189},
  year={1998},
  publisher={SIAM}
}

@article{wang2024extreme,
  title={An extreme learning machine-based method for computational PDEs in higher dimensions},
  author={Wang, Yiran and Dong, Suchuan},
  journal={Computer Methods in Applied Mechanics and Engineering},
  volume={418},
  pages={116578},
  year={2024},
  publisher={Elsevier}
}

@article{HZS:2006,
  title={Extreme learning machine: {T}heory and applications},
  author={Huang, Guang-Bin and Zhu, Qin-Yu and Siew, Chee-Kheong},
  journal={Neurocomputing},
  volume={70},
  number={1-3},
  pages={489--501},
  year={2006},
  publisher={Elsevier}
}

@article{PXX:2022,
  title={A neuron-wise subspace correction method for the finite neuron method},
  author={Park, Jongho and Xu, Jinchao and Xu, Xiaofeng},
  journal={To appear in J. Comput. Math., arXiv preprint arXiv:2211.12031},
  year={2022}
}

@inproceedings{CGPPT:2020,
  title = 	 {Robust Training and Initialization of Deep Neural Networks: {A}n Adaptive Basis Viewpoint},
  author =       {Cyr, Eric C. and Gulian, Mamikon A. and Patel, Ravi G. and Perego, Mauro and Trask, Nathaniel A.},
  booktitle = 	 {Proceedings of The First Mathematical and Scientific Machine Learning Conference},
  pages = 	 {512--536},
  year = 	 {2020},
  volume = 	 {107},
  series = 	 {Proceedings of Machine Learning Research},
  publisher =    {PMLR}
}

@article{HSTX:2022,
  title={On the activation function dependence of the spectral bias of neural networks},
  author={Hong, Qingguo and Siegel, Jonathan W. and Tan, Qinyang and Xu, Jinchao},
  journal={arXiv preprint arXiv:2208.04924},
  year={2022}
}

@article{SX:2024,
  title={Sharp Bounds on the Approximation Rates, Metric Entropy, and $n$-widths of Shallow Neural Networks},
  author={Siegel, Jonathan W. and Xu, Jinchao},
  journal={Found. Comput. Math.},
  volume={24},
  pages={481--537},
  year={2024},
  publisher={Springer}
}

@article{Xu:2020,
author = {Xu, Jinchao},
title = {Finite Neuron Method and Convergence Analysis},
journal = {Commun. Comput. Phys.},
year = {2020},
volume = {28},
number = {5},
pages = {1707--1745}
}

@article{EY:2018,
  title={The deep {R}itz method: a deep learning-based numerical algorithm for solving variational problems},
  author={E, Weinan and Yu, Bing},
  journal={Commun. Math. Stat.},
  volume={6},
  number={1},
  pages={1--12},
  year={2018},
  publisher={Springer}
}

@article{AS:2021,
  title={Plateau Phenomenon in Gradient Descent Training of {R}e{LU} Networks: Explanation, Quantification, and Avoidance},
  author={Ainsworth, Mark and Shin, Yeonjong},
  journal={SIAM J. Sci. Comput.},
  volume={43},
  number={5},
  pages={A3438--A3468},
  year={2021},
  publisher={SIAM}
}

@article{AS:2022,
  title={{A}ctive {N}euron {L}east {S}quares: {A} training method for multivariate rectified neural networks},
  author={Ainsworth, Mark and Shin, Yeonjong},
  journal={SIAM J. Sci. Comput.},
  volume={44},
  number={4},
  pages={A2253--A2275},
  year={2022},
  publisher={SIAM}
}

@article{CCEY:2022,
  title={Bridging traditional and machine learning-based algorithms for solving {PDE}s: {T}he random feature method},
  author={Chen, Jingrun and Chi, Xurong and E, Weinan and Yang, Zhouwang},
  journal={J. Mach. Learn.},
  volume={1},
  pages={268--298},
  year={2022}
}

@article{MC:1995,
  title={Quasi-{M}onte {C}arlo integration},
  author={Morokoff, William J. and Caflisch, Russel E.},
  journal={J. Comput. Phys.},
  volume={122},
  number={2},
  pages={218--230},
  year={1995},
  publisher={Elsevier}
}

@article{Caflisch:1998,
  title={Monte {C}arlo and quasi-{M}onte {C}arlo methods},
  author={Caflisch, Russel E.},
  journal={Acta Numer.},
  volume={7},
  pages={1--49},
  year={1998},
  publisher={Cambridge University Press}
}

@article{RPK:2019,
  title={Physics-informed neural networks: {A} deep learning framework for solving forward and inverse problems involving nonlinear partial differential equations},
  author={Raissi, Maziar and Perdikaris, Paris and Karniadakis, George E.},
  journal={J. Comput. Phys.},
  volume={378},
  pages={686--707},
  year={2019},
  publisher={Elsevier}
}

@book{GBC:2016,
    title={Deep Learning},
    author={Goodfellow, Ian and Bengio, Yoshua and Courville, Aaron},
    publisher={MIT Press, Cambridge, MA},
    note={\url{http://www.deeplearningbook.org}},
    year={2016}
}

@article{DPW:2021,
  title={Greedy Shallow Networks: {A}n Approach for Constructing and Training Neural Networks},
  author={Dereventsov, Anton and Petrosyan, Armenak and Webster, Clayton},
  journal={Int. J. Artif. Intell. Res.},
  volume={19},
  number={2},
  year={2021}
}

@article{LL:2024,
  title={Approximation with Random Shallow {R}e{LU} Networks with Applications to Model Reference Adaptive Control},
  author={Lamperski, Andrew and Lekang, Tyler},
  journal={arXiv preprint arXiv:2403.17142},
  year={2024}
}

@article{DL:2021,
  title={Local extreme learning machines and domain decomposition for solving linear and nonlinear partial differential equations},
  author={Dong, Suchuan and Li, Zongwei},
  journal={Comput. Methods Appl. Mech. Engrg.},
  volume={387},
  pages={Paper No.~114129},
  year={2021},
  publisher={Elsevier}
}

@article{MZ:1993,
  title={Matching pursuits with time-frequency dictionaries},
  author={Mallat, St{\'e}phane G and Zhang, Zhifeng},
  journal={IEEE Trans. Signal Process.},
  volume={41},
  number={12},
  pages={3397--3415},
  year={1993},
  publisher={IEEE}
}

@article {Jones:1992,
    AUTHOR = {Jones, Lee K.},
     TITLE = {A simple lemma on greedy approximation in {H}ilbert space and
              convergence rates for projection pursuit regression and neural
              network training},
   JOURNAL = {Ann. Statist.},
    VOLUME = {20},
      YEAR = {1992},
    NUMBER = {1},
     PAGES = {608--613},
}

@article{DT:1996,
  title={Some remarks on greedy algorithms},
  author={DeVore, Ronald A and Temlyakov, Vladimir N},
  journal={Adv. Comput. Math.},
  volume={5},
  number={1},
  pages={173--187},
  year={1996},
  publisher={Springer}
}

@book {Temlyakov:2011,
    AUTHOR = {Temlyakov, Vladimir},
     TITLE = {Greedy Approximation},
 PUBLISHER = {Cambridge University Press, Cambridge},
      YEAR = {2011}
}

@article{Cuomo:2022,
  title={Scientific machine learning through physics-informed neural networks: {W}here we are and what’s next},
  author={Cuomo, Salvatore and Di Cola, Vincenzo Schiano and Giampaolo, Fabio and Rozza, Gianluigi and Raissi, Maziar and Piccialli, Francesco},
  journal={J. Sci. Comput.},
  volume={92},
  number={3},
  pages={Paper No.~88},
  year={2022},
  publisher={Springer}
}

@article{LLR:2024,
  title={A Nonoverlapping Domain Decomposition Method for Extreme Learning Machines: {E}lliptic Problems},
  author={Lee, Chang-Ock and Lee, Youngkyu and Ryoo, Byungeun},
  journal={arXiv preprint arXiv:2406.15959},
  year={2024}
}

@article{Temlyakov:2015greedyconvex,
  title={Greedy approximation in convex optimization},
  author={Temlyakov, VN},
  journal={Constr. Approx.},
  volume={41},
  number={2},
  pages={269--296},
  year={2015},
  publisher={Springer}
}

@article{XX:2025,
  title={Randomized Greedy Algorithms for Neural Network Optimization in Solving Partial Differential Equations},
  author={Xu, Jinchao and Xu, Xiaofeng},
  journal={Journal of Scientific Computing},
  volume={105},
  number={1},
  pages={26},
  year={2025},
  publisher={Springer}
}

@article{gonzalez2010measurement,
  title={Measurement of areas on a sphere using Fibonacci and latitude--longitude lattices},
  author={Gonz{\'a}lez, {\'A}lvaro},
  journal={Mathematical geosciences},
  volume={42},
  number={1},
  pages={49--64},
  year={2010},
  publisher={Springer}
}

@article{du1999centroidal,
  title={Centroidal Voronoi tessellations: Applications and algorithms},
  author={Du, Qiang and Faber, Vance and Gunzburger, Max},
  journal={SIAM review},
  volume={41},
  number={4},
  pages={637--676},
  year={1999},
  publisher={SIAM}
}

@article{du2003constrained,
  title={Constrained centroidal Voronoi tessellations for surfaces},
  author={Du, Qiang and Gunzburger, Max D and Ju, Lili},
  journal={SIAM Journal on Scientific Computing},
  volume={24},
  number={5},
  pages={1488--1506},
  year={2003},
  publisher={SIAM}
}
\bibliographystyle{ieeetr}

\end{document}